\numberwithin{equation}{section}
\newcommand{\defi}[1]{{\textit{#1}}}
\newcommand{\Xvw}[2]{{X^{#1}_{#2}}}
\newcommand{\Q}{{\mathsf{Q}}}
\newcommand{\flag}{{\mathcal{F} \ell}}
\newcommand{\C}{{\mathbb{C}}}
\newcommand{\R}{{\mathbb{R}}}
\newcommand{\Z}{{\mathbb{Z}}}
\newcommand{\polygon}[1]{\mathsf{P}_{#1}}
\newcommand{\B}{\mathcal{B}}
\renewcommand{\ll}{\mathsf{L}}
\newcommand{\rr}{\mathsf{R}}
\newcommand{\T}{\mathcal{T}}
\newcommand{\lT}{\T_T^{\ll}}
\newcommand{\rT}{\T_T^{\rr}}
\newcommand{\head}{\mathsf{h}}
\newcommand{\tail}{\mathsf{t}}
\newcommand{\uh}[1]{{#1}_{\head}}
\newcommand{\ut}[1]{{#1}_{\tail}}
\DeclareMathOperator{\GL}{GL}
\DeclareMathOperator{\Cone}{Cone}
\DeclareMathOperator{\PC}{PC}
\def\Sn{\mathfrak{S}_{n+1}}
\def\Tb{\mathbb{T}}
\newtheorem{theorem}{Theorem}[section]
\newtheorem{lemma}[theorem]{Lemma}
\newtheorem{proposition}[theorem]{Proposition}
\newtheorem{corollary}[theorem]{Corollary}
\newtheorem{Question}[theorem]{Question}
\theoremstyle{definition}
\newtheorem{example}[theorem]{Example}
\newtheorem{definition}[theorem]{Definition}
\newtheorem{remark}[theorem]{Remark}
\begin{document}

\author[Eunjeong Lee]{Eunjeong Lee}
\address[E. Lee]{Center for Geometry and Physics, Institute for Basic Science (IBS), Pohang 37673, Republic of Korea}
\curraddr{Department of Mathematics, Chungbuk National University, Cheongju 28644, Republic of Korea}
\email{eunjeong.lee@chungbuk.ac.kr}

\author[Mikiya Masuda]{Mikiya Masuda}
\address[M. Masuda]{Osaka City University Advanced Mathematics Institute (OCAMI) \& Department of Mathematics, Graduate School of Science, Osaka City University, Sumiyoshi-ku, Sugimoto, 558-8585, Osaka, Japan}
\email{mikiyamsd@gmail.com}

\author[Seonjeong Park]{Seonjeong Park}
\address[S. Park]{Department of Mathematics Education, Jeonju University, Jeonju 55069, Republic of Korea}
\email{seonjeongpark@jj.ac.kr}

\thanks{Lee was supported by the Institute for Basic Science (IBS-R003-D1). Masuda was supported in part by JSPS Grant-in-Aid for Scientific Research 19K03472 and a HSE University Basic Research Program. Park was supported by the Basic Science Research Program through the National Research Foundation of Korea (NRF) funded by the Government of Korea (NRF-2018R1A6A3A11047606, NRF-2020R1A2C1A01011045). This work was partly supported by Osaka City University Advanced Mathematical Institute (MEXT Joint Usage/Research Center on Mathematics and Theoretical Physics JPMXP0619217849).}



\title[Toric Richardson varieties of Catalan type]{Toric Richardson varieties of Catalan type and Wedderburn--Etherington numbers}

\date{\today}

\subjclass[2020]{Primary: 14M25, 14M15, secondary: 05A05}

\keywords{Richardson varieties, Bruhat interval polytopes, Bott manifolds, Catalan numbers, Wedderburn--Etherington numbers}

\maketitle

\begin{abstract} 
We associate a complete non-singular fan with a polygon triangulation. Such a fan appears from a certain toric Richardson variety, called of Catalan type introduced in this paper. A toric Richardson variety of Catalan type is a Fano Bott manifold. We show that toric Richardson varieties of Catalan type are classified up to isomorphism in terms of unordered binary trees. In particular, the number of isomorphism classes of $n$-dimensional toric Richardson varieties of Catalan type is the $(n+1)$th Wedderburn--Etherington number. 
\end{abstract}

\setcounter{tocdepth}{1} \tableofcontents

\section{Introduction} 

In this paper, we restrict our concern to Lie type $A$.  
Let $G=\GL_{n+1}(\C)$, $B$ the Borel subgroup consisting of upper triangular matrices in $G$, and $\Tb$  the complex torus consisting of diagonal matrices in $G$.
The \emph{flag variety} is a smooth projective variety defined by the homogeneous space~$G/B$. The complex torus $\Tb$ acts on $G/B$ by the left multiplication.
As exhibited by Gelfand and Serganova~\cite{GelfandSerganova} (also, see~\cite{GGMS87}), the study of the action of the torus $\Tb$ on the flag variety $G/B$  provides a fruitful connection between combinatorics on the symmetric group and equivariant algebraic geometry on the flag variety. For instance, through a moment map
\[
\mu\colon G/B\to \R^{n+1},
\]
we see how the closures of $\Tb$-orbits in $G/B$ are related to the combinatorics of polytopes, called \defi{Coxeter matroid polytopes}. 
The set of $\Tb$-fixed points in $G/B$ can naturally be identified with the symmetric group $\Sn$ on $n+1$ letters and we have 
\[\mu(z)=(z^{-1}(1),\dots,z^{-1}(n+1))\quad \text{for $z\in\Sn$},\]
see~\cite[Lemma~3.1]{LMP2021}.

For a pair $(v,w)$ of elements in $\Sn$ satisfying $v \leq w$ in the Bruhat order, Kodama and Williams~\cite{KodamaWilliams} introduced the \defi{Bruhat interval polytope} $\Q_{v,w}$ which is defined as the convex hull of the points $(z(1),\dots,z(n+1))$ in $\R^{n+1}$ for all $z$ with $v\le z\le w$. The combinatorial properties of Bruhat interval polytopes were further investigated by  Tsukerman and Williams in~\cite{TsukermanWilliams}.
The Bruhat interval polytope $\Q_{v,w}$ is an example of a Coxeter matroid polytope and is the image of the Richardson variety $\Xvw{v^{-1}}{w^{-1}}$ by the moment map $\mu$. Here, $\Xvw{v^{-1}}{w^{-1}}$ is the intersection of the Schubert variety~$X_{w^{-1}} {\coloneqq \overline{B w^{-1}B/B} \subset G/B}$ and the opposite Schubert variety $w_0 X_{w_0 v^{-1}}$, where $w_0$ is the longest element in $\Sn$. Note that $\Xvw{v^{-1}}{w^{-1}}=X_{w^{-1}}$ when $v$ is the identity element. 
We remark that the action of $\Tb$ on $G/B$ leaves any Richardson variety.

It is known that 
\[
\dim_{\R} \Q_{v,w}\le \ell (w)-\ell(v)  = \dim_{\C} \Xvw{v^{-1}}{w^{-1}},
\]
where $\ell$ is the length function on $\Sn$. The Richardson variety $\Xvw{v^{-1}}{w^{-1}}$ is a toric variety with respect to the $\Tb$-action if and only if $\dim_{\R} \Q_{v,w}=\ell (w)-\ell(v)$. In this case, the fan of $\Xvw{v^{-1}}{w^{-1}}$ is the normal fan of $\Q_{v,w}$. Every \emph{toric} Schubert variety is smooth, but a \emph{toric} Richardson variety is not necessarily smooth. It is smooth if and only if the corresponding Bruhat interval polytope~$\Q_{v,w}$ is combinatorially equivalent to a cube (see~\cite[Proposition~5.6]{LMP2021}). This means that a smooth toric Richardson variety is a Bott manifold that is the total space of an iterated $\C P^1$-bundle over a point, where each $\C P^1$-bundle is the projectivization of the Whitney sum of two line bundles. Indeed, Hirzebruch surfaces are $2$-dimensional Bott manifolds. 

It is known that when $v$ is the identity element $e$, {the Bruhat interval 
polytope} $\Q_{e,w}$ is combinatorially equivalent to a cube if and only if $w$ is 
a product of distinct simple transpositions $s_i$ interchanging $i$ and $i+1$ 
(see~\cite{Fan98, Karu13Schubert, LM2020}). Such characterization of $w$ is not 
known for general~$v$ but there are many pairs $(v,w)$ such that $\Q_{v,w}$ is 
combinatorially equivalent to a cube (see~\cite{LMP2021}). Among them, the 
following pair of $v$ and $w$ is the simplest form: 
\begin{equation} \label{eq:simple_pair_vw}
w=vs_1s_2\cdots s_n\quad (\text{or}\ w=vs_n\cdots s_2s_1)\quad \text{and}\quad \ell(w)-\ell(v)=n.
\end{equation}

In this paper, we associate a complete non-singular fan of dimension $n$ with a triangulation of a convex $(n+2)$-gon $\polygon{n+2}$ and see that such a fan is the normal fan of $\Q_{v,w}$ for the pair $(v,w)$ in~\eqref{eq:simple_pair_vw} and vice versa. As is well-known, the number of triangulations of $\polygon{n+2}$ is the Catalan number $C_n=\frac{1}{n+1}\binom{2n}{n}$, so we say that a toric Richardson variety (or a toric variety) is \emph{of  Catalan type} if its fan is associated with a polygon triangulation. A toric (Richardson) variety of Catalan type is not only a Bott manifold but also Fano (Lemma~\ref{lemm:Fano}). We note that not all smooth toric Richardson varieties are Fano. Indeed, there is a toric Schubert variety which is not Fano although any toric Schubert variety {(of Lie type $A$)} is weak Fano (see~\cite{LMP_toric_Schubert}). 

There is a well-known bijection between the set of triangulations of $\polygon{n+2}$ and the set of (rooted) binary trees with $n$ vertices. We note that a binary tree is \textit{ordered}, which means that an ordering is specified for the children of each vertex. We show that two polygon triangulations produce isomorphic toric (Richardson) varieties of Catalan type if and only if the corresponding binary trees are isomorphic as rooted trees when we forget the orderings. Namely, we have the following. 

\begin{theorem}[Theorem~\ref{theo:fan_and_toric_of_Catalan} and Corollary~\ref{cor_enumerate}] 
The set of isomorphism classes of $n$-dimensional toric 
\textup{(}Richardson\textup{)} varieties of Catalan type bijectively corresponds to 
the set of \emph{unordered} binary trees with $n$ vertices, where the cardinality 
of the latter set is known as the Wedderburn--Etherington number $b_{n+1}$. 
\end{theorem}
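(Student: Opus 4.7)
The plan is to split the statement into a classification of Catalan-type toric Richardson varieties by unordered binary trees, from which the enumeration by $b_{n+1}$ follows immediately because $b_{n+1}$ is, essentially by definition, the number of unordered rooted binary trees with $n$ vertices. I would start from the classical bijection between triangulations of $\polygon{n+2}$ (with a distinguished root edge) and ordered rooted binary trees with $n$ vertices, and combine it with the result earlier in the paper that identifies a Catalan-type toric variety with the toric variety whose fan is the normal fan of the corresponding triangulation. The substantive task then reduces to characterising exactly which ordered trees yield isomorphic toric varieties.

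For the \emph{sufficiency} direction — two triangulations whose underlying unordered binary trees coincide give isomorphic varieties — I would argue by induction on $n$ using the Bott tower structure. Each Catalan-type variety is an iterated $\C P^1$-bundle whose tower mirrors the binary tree: at each internal vertex, the stage is the projectivisation of a sum of two line bundles pulled back along the two subtrees. Swapping the two children interchanges these two line bundles, which induces a canonical isomorphism of the projectivisation, and hence of the total space. Iterating over the internal vertices where a swap occurs realises any unordered-tree equivalence as an isomorphism of toric varieties, after checking at the fan level that the local swap extends compatibly to the full fan.

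For the \emph{necessity} direction — non-equivalent unordered trees produce non-isomorphic varieties — which I expect to be the main obstacle, I would again proceed by induction. The aim is to reconstruct, from the variety alone, a canonical $\C P^1$-bundle whose base is a Catalan-type Bott manifold whose unordered tree is that of the original with its root removed; the inductive hypothesis then closes the argument. The delicate point is canonicity of this bundle structure independently of any chosen triangulation presentation. I would try to isolate the root vertex intrinsically from the fan by singling out a distinguished pair of opposite rays, characterised either by the primitive collection combinatorics of the fan or by selecting an extremal ray of the Mori cone using the Fano condition supplied by Lemma~\ref{lemm:Fano}. An alternative route, if the direct fan-theoretic argument proves cumbersome, is to exploit cohomological rigidity of Fano Bott manifolds to reduce the isomorphism classification to an invariant combinatorial datum attached to the tree.

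Combining the two directions yields the desired bijection between isomorphism classes of $n$-dimensional toric Richardson varieties of Catalan type and unordered binary trees with $n$ vertices, and the Wedderburn--Etherington count $b_{n+1}$ then follows immediately.
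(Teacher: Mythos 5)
Your proposal takes a genuinely different route from the paper's, and there is a real gap in the necessity direction. The paper sidesteps the entire intrinsic-reconstruction problem by invoking Batyrev's classification of smooth Fano toric varieties (\cite[Proposition~2.1.8, Theorem~2.2.4]{Batyrev}): two such varieties are isomorphic if and only if there is a lattice isomorphism carrying one fan to the other which preserves primitive collections and their relations. By Lemma~\ref{lemm:fan} the primitive collections of $\Sigma_T$ are exactly the pairs $\{\mathbf{v}_k,\mathbf{w}_k\}$, and by Lemma~\ref{lemm:fano_condition} and Corollary~\ref{coro:binary_trees} the primitive relation at $k$ encodes precisely the parent $\varphi(k)$ and sign $\sigma(k)$, i.e.\ the binary tree $\B_T$. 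So an isomorphism of Fano toric varieties translates word-for-word into a bijection on vertices sending root to root and parent to parent without specifying left/right --- an isomorphism of unordered rooted binary trees --- and conversely. Both directions are one short paragraph, with no induction and no need to locate any bundle structure intrinsically.

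Your necessity argument has a structural problem even before the canonicity issue you flag. Removing the root of $\B_T$ yields a \emph{forest} (a left subtree and a right subtree, one or both possibly empty), so the base of the $\C P^1$-bundle you propose is in general a \emph{product} of lower-dimensional Catalan-type varieties, not a single one; the inductive hypothesis as stated does not apply to the base, and you would have to induct instead over products of Catalan-type varieties and also establish that the product decomposition is itself intrinsic. Moreover, the way you would single out the distinguished pair of rays --- the root pair is the unique primitive collection of degree $2$, or equivalently the unique antipodal pair $\{\mathbf{u},-\mathbf{u}\}$ of rays --- is exactly the primitive-collection bookkeeping that Batyrev's theorem packages for you. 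Once you invoke that machinery to make the root canonical, the induction is superfluous and you have in effect reproduced the paper's direct argument in a longer form. The cohomological-rigidity alternative you mention is a legitimate escape (and is the subject of the references in Remark~\ref{rmk_Bott_manifolds}), but it is substantially heavier than what the statement requires.

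Your sufficiency direction via the Bott tower is a reasonable alternative to the paper's argument, but it also needs more care than the sketch suggests: a Bott tower is a linear chain of $\C P^1$-bundles obtained by \emph{linearly ordering} the vertices of $\B_T$ compatibly with the parent relation (cf.\ the choice of $u\in\mathfrak{S}_n$ in the proof of Lemma~\ref{lemm:fan}), not the tree itself, and a swap at an internal vertex perturbs the twisting data for the entire subtree beneath it, not just one stage. The paper instead produces the required lattice isomorphism directly from the primitive-relation data, which is shorter and avoids tracking line bundles through the tower.
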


The Wedderburn--Etherington number $b_n$ $(n\ge 1)$ is the number of ways of parenthesizing a string of $n$ letters, subject to a commutative (but nonassociative) binary operation and appears in counting several different objects (see \href{https://oeis.org/A001190}{Sequence A001190} in OEIS~\cite{OEIS}, \cite[A56 in p.133]{Stanley_Catalan}). The generating function $B(x)=\sum_{n\ge 1} b_nx^n$ of the Wedderburn--Etherington numbers satisfies the functional equation
\[
B(x)=x+\frac{1}{2}B(x)^2+\frac{1}{2}B(x^2), 
\] 
which was the motivation of Wedderburn in his work~\cite{Wedderburn22} {and was considered by Etherington~\cite{Etherington37}}.
This functional equation is equivalent to the recurrence relation
\[
b_{2m-1}=\sum_{i=1}^{m-1}b_ib_{2m-i-1}\quad (m\ge 2),\qquad b_{2m}= \frac{b_m(b_m+1)}{2}+\sum_{i=1}^{m-1}b_ib_{2m-i}
\]
with $b_1=1$. Using this recurrence relation, one can calculate the Wedderburn--Etherington numbers, see Table~\ref{table_bn}.

\begin{table}[H]
\begin{tabular}{c|rrrrrrrrrrrrrrr}
\toprule
$n$ & $1$ & $2$ & $3$ & $4$ & $5$ & $6$ & $7$ & $8$ & $9$ & $10$ & $11$ & $12$ & $13$ & $14$ & $15$ \\
\midrule
$b_n$ & $1$ & $1$ & $1$ & $2$ & $3$ & $6$ & $11$ & $23$ & $46$ & $98$ & $207$ & $451$ & $983$ & $2179$ & $4850$ \\ 
\bottomrule
\end{tabular}
\caption{Wedderburn--Etherington numbers $b_n$ for small values of $n$} \label{table_bn}
\end{table}

As mentioned above, {the Bruhat interval polytope} $\Q_{e,w}$ (which is the moment map image of the Schubert variety $X_{w^{-1}}$) is combinatorially equivalent to a cube if and only if $w$ is a product of distinct simple transpositions. This fact is generalized to any Lie type (see~\cite{Fan98, Karu13Schubert, LM2020}). In our forthcoming paper~{\cite{LMP_toric_Schubert},} we will discuss toric Schubert varieties in any Lie type and see that directed Dynkin diagrams appear in their classification. 

This paper is organized as follows. We illustrate the ideas underlying the paper with an example in Section~\ref{sec:example}. In Section~\ref{sec_Catalan_numbers}, we review Catalan numbers and the bijective correspondence between polygon triangulations and binary trees. In Section~\ref{sec_left_and_right_trees}, we associate primitive vectors $\mathbf{v}_{k}$'s and $\mathbf{w}_{k}$'s for $k=1,\dots,n$ with a triangulation of $\polygon{n+2}$. 
In Section~\ref{sec_lT_rT_and_binary_trees}, 
we study how these primitive vectors $\mathbf{v}_{k}$'s and $\mathbf{w}_{k}$'s are related to the binary tree associated with the polygon triangulation.
In Section~\ref{sec:fan_Catalan_type}, we form a fan using $\mathbf{v}_{k}$'s and $\mathbf{w}_{k}$'s, where these vectors become ray generators of the fan, and see when such fans are isomorphic. In Section~\ref{sec:permutations_etc}, we review how to associate a binary tree (equivalently a polygon triangulation), denoted by $\psi(u)$, with a permutation $u$. We also associate a Bruhat interval polytope with the permutation~$u$ and see that its normal fan agrees with the fan associated with $\psi(u)$. In Section~\ref{sec_smooth_toric_Richadson}, we interpret the results obtained in the previous sections in terms of Richardson varieties. We also consider products of toric Richardson varieties of Catalan type and enumerate their isomorphism classes.

\section{An example illustrating the idea} \label{sec:example}

We illustrate the idea underlying this paper with an example, which will help the reader to understand the argument developed in the paper. 

Consider two permutations $v=1243$ and $w=2431$ in $\mathfrak{S}_4$, where $v$ and $w$ are written in one-line notation. Note that the pair $(v,w)$ satisfies the condition in~\eqref{eq:simple_pair_vw}. 
The Bruhat interval $[1243, 2431]$ consists of $8$ permutations in the red part of Figure~\ref{fig:Bruhat_interval_1243-2431} and the Bruhat interval polytope $\Q_{1243,2431}$ is a $3$-cube drawn in red and thick  in Figure~\ref{fig:Bruhat_interval_polytope_1243-2431}. 
Here, for permutations $v$ and $w$ in $\mathfrak{S}_{n+1}$, the Bruhat interval $[v,w]$ is defined to be $[v,w] \coloneqq \{ z \in \mathfrak{S}_{n+1} \mid v \leq z \leq w\}$.
The entire polytope in Figure~\ref{fig:Bruhat_interval_polytope_1243-2431} is the $3$-dimensional permutohedron, where the vertices are all the permutations in $\mathfrak{S}_4$ and the label on a vertex, say $2431$, shows that the coordinate of the vertex is $(2,4,3,1)\in \R^4$.

\begin{figure}[htbp]
\begin{subfigure}[b]{0.49\textwidth}
\begin{tikzpicture}[scale=.7]
\tikzset{every node/.style={font=\footnotesize}}
\matrix [matrix of math nodes,column sep={0.52cm,between origins},
row sep={0.8cm,between origins},
nodes={circle, draw=blue!50,fill=blue!20, thick, inner sep = 0pt , minimum size=1.2mm}]
{
& & & & & \node[label = {above:{4321}}] (4321) {} ; & & & & & \\
& & &
\node[label = {above left:4312}] (4312) {} ; & &
\node[label = {above left:4231}] (4231) {} ; & &
\node[label = {above right:3421}] (3421) {} ; & & & \\
& \node[label = {above left:4132}] (4132) {} ; & &
\node[label = {left:4213}] (4213) {} ; & &
\node[label = {above:3412}] (3412) {} ; & &
\node[label = {[label distance = 0.1cm]0:2431},fill=red!20!white, draw=red!75!white] (2431) {} ; & &
\node[label = {above right:3241}] (3241) {} ; & \\
\node[label = {left:1432}, fill=red!20!white, draw=red!75!white] (1432) {} ; & &
\node[label = {left:4123}] (4123) {} ; & &
\node[label = {[label distance = 0.01cm]180:2413}, fill=red!20!white, draw=red!75!white] (2413) {} ; & &
\node[label = {[label distance = 0.01cm]0:3142}] (3142) {} ; & &
\node[label = {right:2341}, fill=red!20!white, draw=red!75!white] (2341) {} ; & &
\node[label = {right:3214}] (3214) {} ; \\
& \node[label = {below left:1423}, fill=red!20!white, draw=red!75!white] (1423) {} ; & &
\node[label = {[label distance = 0.1cm]182:1342}, fill=red!20!white, draw=red!75!white] (1342) {} ; & &
\node[label = {below:2143}, fill=red!20!white, draw=red!75!white] (2143) {} ; & &
\node[label = {right:3124}] (3124) {} ; & &
\node[label = {below right:2314}] (2314) {} ; & \\
& & & \node[label = {below left:1243}, fill=red!20!white, draw=red!75!white ] (1243) {} ; & &
\node[label = {[label distance = 0.01cm]190:1324}] (1324) {} ; & &
\node[label = {below right:2134}] (2134) {} ; & & & \\
& & & & & \node[label = {below:1234}] (1234) {} ; & & & & & \\
};
\draw (4321)--(4312)--(4132)--(1432)--(1423)--(1243)--(1234)--(2134)--(2314)--(2341)--(3241)--(3421)--(4321);
\draw (4321)--(4231)--(4132);
\draw (4231)--(3241);
\draw (4231)--(2431);
\draw (4231)--(4213);
\draw (4312)--(4213)--(2413)--(2143)--(3142)--(3241);
\draw (4312)--(3412)--(2413)--(1423)--(1324)--(1234);
\draw (3421)--(3412)--(3214)--(3124)--(1324);
\draw (3421)--(2431)--(2341)--(2143)--(2134);
\draw (4132)--(4123)--(1423);
\draw (4132)--(3142)--(3124)--(2134);
\draw (4213)--(4123)--(2143)--(1243);
\draw (4213)--(3214);
\draw (3412)--(1432)--(1342)--(1243);
\draw (2431)--(1432);
\draw (2431)--(2413)--(2314);
\draw (3142)--(1342)--(1324);
\draw (4123)--(3124);
\draw (2341)--(1342);
\draw (2314)--(1324);
\draw (3412)--(3142);
\draw (3241)--(3214)--(2314);
\draw[line width=1ex, red,nearly transparent] (1243)--(1423)--(1432)--(1342);
\draw[line width=1ex, red,nearly transparent] (1243)--(2143)--(2413)--(1423);
\draw[line width=1ex, red,nearly transparent] (2143)--(2341)--(1342);
\draw[line width=1ex, red,nearly transparent] (2413)--(2431)--(1432);
\draw[line width=1ex, red,nearly transparent] (2431)--(2341);
\draw[line width=1ex, red,nearly transparent] (1342)--(1243); 
\end{tikzpicture}
\subcaption{Bruhat interval $[1243, 2431]$} \label{fig:Bruhat_interval_1243-2431}
\end{subfigure}
\begin{subfigure}[b]{0.49\textwidth}
\tdplotsetmaincoords{-165}{120} %
\begin{tikzpicture}[scale=6, rotate=-25, tdplot_main_coords]
\tikzset{every node/.style={draw=blue!50,fill=blue!20, circle, thick, inner sep=1pt,font=\footnotesize}}


\tikzset{red node/.style = {fill=red!20!white, draw=red!75!white}}
\tikzset{red line/.style = {line width=0.4ex, red, nearly opaque}}
\coordinate (3142) at (1/3, 1/2, 1/6);
\coordinate (4231) at (2/3, 1/2, 1/6);
\coordinate (4312) at (5/6, 2/3, 1/2);
\coordinate (4321) at (5/6, 1/2, 1/3);
\coordinate (3421) at (5/6, 1/3, 1/2);
\coordinate (4213) at (2/3, 5/6, 1/2);
\coordinate (1324) at (1/3, 1/2, 5/6);
\coordinate (2413) at (2/3, 1/2, 5/6);
\coordinate (3412) at (5/6, 1/2, 2/3);
\coordinate (2314) at (1/2, 2/3, 5/6);
\coordinate (4123) at (1/2, 5/6, 1/3);
\coordinate (4132) at (1/2, 2/3, 1/6);
\coordinate (3214) at (1/2, 5/6, 2/3);
\coordinate (3124) at (1/3, 5/6, 1/2);
\coordinate (2431) at (2/3, 1/6, 1/2);
\coordinate (1432) at (1/2, 1/6, 2/3);
\coordinate (1423) at (1/2, 1/3, 5/6);
\coordinate (1342) at (1/3, 1/6, 1/2);
\coordinate (2341) at (1/2, 1/6, 1/3);
\coordinate (3241) at (1/2, 1/3, 1/6);
\coordinate (1243) at (1/6, 1/3, 1/2);
\coordinate (2143) at (1/6, 1/2, 1/3);
\coordinate (1234) at (1/6, 1/2, 2/3);
\coordinate (2134) at (1/6, 2/3, 1/2);
\draw[thick, draw=blue!70, dashed] (2134)--(2143)--(3142)--(4132)--(4123);
\draw[thick, draw=blue!70, dashed] (2143)--(1243);
\draw[thick, draw=blue!70, dashed] (3142)--(3241)--(2341)--(1342);
\draw[thick, draw=blue!70, dashed] (2341)--(2431);
\draw[thick, draw=blue!70, dashed] (3241)--(4231)--(4132);
\draw[thick, draw=blue!70, dashed] (4231)--(4321);
\draw[red line] (1243)--(2143)--(2413);
\draw[thick, draw=blue!70] (4213)--(4312)--(3412)--(2413)--(2314)--(3214)--cycle;
\draw[thick, draw=blue!70] (4312)--(4321)--(3421)--(3412);
\draw[thick, draw=blue!70] (3421)--(2431)--(1432)--(1423)--(2413);
\draw[thick, draw=blue!70] (1423)--(1324)--(2314);
\draw[thick, draw=blue!70] (1432)--(1342)--(1243)--(1234)--(1324);
\draw[thick, draw=blue!70] (1234)--(2134)--(3124)--(3214);
\draw[thick, draw=blue!70] (3124)--(4123)--(4213);
\draw[red line, dashed] (2143)--(2341)--(2431);
\draw[red line, dashed] (2341)--(1342);
\draw[red line] (1423)--(1432)--(2431)--(2413)--cycle;
\draw[red line] (1423)--(1243);
\draw[red line] (1243)--(1342)--(1432);
\node [label = {[label distance = -0.2cm]below:1234}] at (1234) {};
\node[label = {[label distance = 0cm]left:1243}, red node] at (1243) {};
\node[label = {[label distance = 0cm]left:1324}] at (1324) {};
\node[label = {[label distance = 0cm]left:1342}, red node] at (1342) {};
\node [label = {[label distance = 0cm]right:1423},red node] at (1423) {};
\node[label = {[label distance = 0cm]left:1432}, red node] at (1432) {};
\node [label = {[label distance = 0cm]right:2134}] at (2134) {};
\node[label = {[label distance = -0.1cm]above right:2143}, red node] at (2143) {};
\node[label = {[label distance = 0cm]right:2314}] at (2314) {};
\node[label = {[label distance = 0cm]right:2341}, red node] at (2341) {};
\node[label = {[label distance = -0.1cm]right :2413}, red node] at (2413) {};
\node[label = {[label distance = 0cm]left:2431}, red node] at (2431) {};
\node[label = {[label distance = 0cm]right:3124}] at (3124) {};
\node[label = {[label distance = -0.2cm]above:3142}] at (3142) {};
\node[label = {[label distance = 0cm]right:3214}] at (3214) {};
\node [label = {[label distance = -0.1cm]above:3241}] at (3241) {};
\node[label = {[label distance = 0cm]below left:3412}] at (3412) {};
\node[label = {[label distance = 0cm]above:3421}] at (3421) {};
\node[label = {[label distance = 0cm]right:4123}] at (4123) {};
\node [label = {[label distance = -0.2cm]below:4132}] at (4132) {};
\node[label = {[label distance = 0cm]right:4213}] at (4213) {};
\node[label = {[label distance = 0cm]right:4231}] at (4231) {};
\node[label = {[label distance = 0cm]right:4312}] at (4312) {};
\node [label = {[label distance = -0.2cm]above:4321}] at (4321) {};
\end{tikzpicture}
\subcaption{ Bruhat interval polytope $\Q_{1243, 2431}$} \label{fig:Bruhat_interval_polytope_1243-2431}
\end{subfigure}
\caption{A Bruhat interval polytope which is a 3-cube.}\label{fig:1243-2431}
\end{figure}
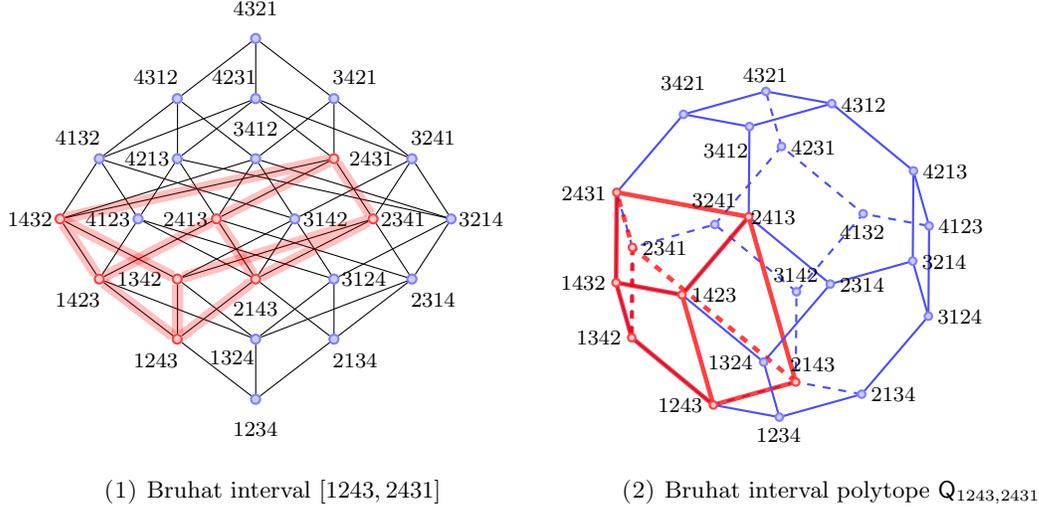

One sees from Figure~\ref{fig:Bruhat_interval_polytope_1243-2431} that there are three edges emanating from the vertex $v=1243$ (resp. $w=2431$) and their primitive edge vectors are 
\begin{equation} \label{eq:example_atom_coatom}
\begin{split}
&\mathbf{p}_1=e_1-e_2,\quad \mathbf{p}_2=e_2-e_3,\quad \mathbf{p}_3=e_2-e_4 \\
(\text{resp. } &\mathbf{q}_1=-e_1+e_4,\ \mathbf{q}_2=-e_2+e_3,\ \mathbf{q}_3=-e_3+e_4),
\end{split}
\end{equation}
where $e_1,e_2,e_3,e_4$ denote the standard basis of $\R^4$. These primitive 
vectors correspond to the atoms and coatoms of the Bruhat interval 
$[1243,2431]$. More precisely, the following pairs $\{i,j\}$ 
\begin{equation} \label{eq:pairs_s4}
\{1,2\},\ \{2,3\},\ \{2,4\} \quad (\text{resp. } \{1,4\},\ \{2,3\},\ \{3,4\}). 
\end{equation}
satisfy that $vt_{i,j}$ covers $v$ and $vt_{i,j}\le w$ (resp. 
$wt_{i,j}$ is covered by $w$ and $v\le wt_{i,j}$), where $t_{i,j}$ denotes the 
transposition interchanging $i$ and $j$.\footnote{For permutations $x$ and 
$y$, we 
say $y$ \emph{covers} $x$ (or equivalently, $x$ \emph{is covered by} $y$) if 
there does not exist $z$ such that  $x < z < y$. }
These correspond to the primitive vectors in~\eqref{eq:example_atom_coatom}. 
We subtract $1$ from the first three pairs above (corresponding to the atoms) in each element, so that we obtain 
\begin{equation} \label{eq:modified_pairs}
\{0,1\},\ \{1,2\},\ \{1,3\} \quad (\text{resp. } \{1,4\},\ \{2,3\},\ \{3,4\})
\end{equation}
and we may regard them as edges or diagonals of the pentagon $\polygon{5}$ 
with vertices labelled from $0$ to $4$ in counterclockwise order. The result is 
shown in Figure~\ref{fig:edge_vectors}, where edges or diagonals of 
$\polygon{5}$ obtained from the first three pairs in \eqref{eq:modified_pairs} 
are shown by blue solid lines while those obtained from the latter three pairs are 
shown by red dashed lines. 
They form a triangulation of $\polygon{5}$.
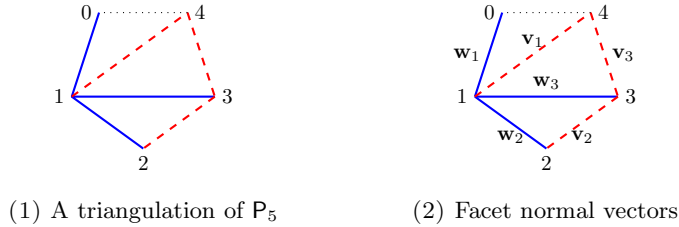
\begin{figure}[htbp]
\begin{subfigure}[b]{0.35\textwidth}
\begin{center}
\begin{tikzpicture}[x=10mm, y=5mm, label distance = 1mm, every node/.style={scale=0.8}]
\foreach \x in {2,...,5,6}{
\coordinate (a\x) at (-18-\x*72:1cm) ; 
} 
\draw[blue, thick] (a3)--(a2)
(a2)--(a6)
(a2)--(a5);
\draw[red, dashed, thick] (a4)--(a5)
(a2)--(a4)
(a5)--(a6); 

\node[right] at (a4) {$4$};
\node[right] at (a5) {$3$};
\node[below] at (a6) {$2$};
\node[left] at (a2) {$1$};
\node[left] at (a3) {$0$};
\draw[dotted] (a4)--(a3);
\end{tikzpicture}
\end{center}
\caption{A triangulation of $\polygon{5}$}\label{fig:edge_vectors}
\end{subfigure} 
\begin{subfigure}[b]{0.35\textwidth}
\begin{center}
\begin{tikzpicture}[x=10mm, y=5mm, label distance = 1mm, every node/.style={scale=0.8}]
\foreach \x in {2,...,5,6}{
\coordinate (a\x) at (-18-\x*72:1cm) ; 
} 
\draw[blue, thick] (a3)--(a2) node[left, midway, black]{$\mathbf{w}_1$};
\draw[blue, thick] (a2)--(a6) node[below, midway, black]{$\mathbf{w}_2$};
\draw[blue, thick] (a2)--(a5) node[above, midway, black]{$\mathbf{w}_3$};
\draw[red, dashed, thick] (a4)--(a5) node[right, midway, black]{$\mathbf{v}_3$};
\draw[red, dashed, thick] (a2)--(a4) node[above, midway, black]{$\mathbf{v}_1$};
\draw[red, dashed, thick] (a5)--(a6) node[below, midway, black]{$\mathbf{v}_2$};
\node[right] at (a4) {$4$};
\node[right] at (a5) {$3$};
\node[below] at (a6) {$2$};
\node[left] at (a2) {$1$};
\node[left] at (a3) {$0$};
\draw[dotted] (a4)--(a3);
\end{tikzpicture}
\end{center}
\caption{Facet normal vectors}\label{fig:normal_vectors}
\end{subfigure} 
\caption{Triangulation and facet normal vectors obtained from $\Q_{1243,2431}$}
\label{fig:example_pentagon}
\end{figure}

The reader may wonder why we subtract $1$ from the first three pairs in 
\eqref{eq:pairs_s4} corresponding to the atoms. 
Even though the reason will be revealed in Section~\ref{sec:permutations_etc} (see Proposition~\ref{prop_u_head_atoms_and_lTrT}), we briefly explain it.
The 
permutation $v=1243$ (resp. $w=2431$) is obtained from $243$ by putting the 
number $1$ at the head (resp. at the tail). If we regard the positions of the 
numbers $2, 4, 3$ in $243$ as the $1$st, the $2$nd, the $3$rd in this order, 
then the positions of the numbers $1, 2, 4, 3$ in $v=1243$ are the $0$th, the  
$1$st, the $2$nd, the $3$rd in this order. In this regard, a permutation which 
covers $v$, say $2143$, is obtained from $v=1243$ by interchanging the $0$th 
position and the $1$st position, so that we obtain the pair $\{0,1\}$ 
in~\eqref{eq:modified_pairs}. This is the reason why we subtract $1$ from the 
pairs in~\eqref{eq:pairs_s4} corresponding to the atoms but leave the pairs 
corresponding to the coatoms unchanged. 

The primitive edge vectors $\mathbf{p}_1, \mathbf{p}_2,\mathbf{p}_3$ (resp. $\mathbf{q}_1, \mathbf{q}_2, \mathbf{q}_3$) in \eqref{eq:example_atom_coatom} form a basis of the sublattice~$M$ of $\Z^4$ with the sum of the coordinates equal to zero. Through the dot product on $\Z^4$, we can think of the dual lattice of $M$ as the quotient lattice $N=\Z^4/\Z(1,1,1,1)$. Let $\varpi_k$ $(k=0,1,2,3,4)$ be the quotient image of $\sum_{i=1}^k e_i$ in $N$. Then $\{\varpi_1,\dots,\varpi_4\}$ is a basis of $N$ and $\varpi_0=\varpi_{5}=\boldsymbol{0}$ by definition.
The dual basis of $\mathbf{p}_1, \mathbf{p}_2,\mathbf{p}_3$ (resp. $\mathbf{q}_1, \mathbf{q}_2, \mathbf{q}_3$) is given by 
\begin{equation} \label{eq:example_facet_vw}
\begin{split}
&\mathbf{v}_1=\varpi_1=\varpi_1-\varpi_4,\quad\ \mathbf{v}_2=\varpi_2-\varpi_3,\ \ \mathbf{v}_3=\varpi_3-\varpi_4,\\
(\text{resp. }& \mathbf{w}_1=-\varpi_1=\varpi_0-\varpi_1,\ \mathbf{w}_2=\varpi_1-\varpi_2,\ \mathbf{w}_3=\varpi_1-\varpi_3). 
\end{split}
\end{equation}
These vectors may be regarded as the primitive inward facet normal vectors of the $3$-cube $\Q_{1243,2431}$, so they are ray generators of the normal fan of $\Q_{1243,2431}$. Finally, we note that $\mathbf{v}_1, \mathbf{v}_2, \mathbf{v}_3, \mathbf{w}_1, \mathbf{w}_2, \mathbf{w}_3$ in \eqref{eq:example_facet_vw} can be assigned to edges or diagonals of $\polygon{5}$ as shown in Figure~\ref{fig:normal_vectors} by looking at the suffixes of $\varpi_i$'s in their expression \eqref{eq:example_facet_vw}.  Then the three relations 
\[
\mathbf{v}_1+\mathbf{w}_1=\mathbf{0}, \quad \mathbf{v}_2+\mathbf{w}_2=\mathbf{w}_3, \quad \mathbf{v}_3+\mathbf{w}_3=\mathbf{v}_1
\]
obtained from \eqref{eq:example_facet_vw} correspond to the three triangles in our triangulation of $\polygon{5}$ as is seen in Figure~\ref{fig:normal_vectors}, where we understand that the zero vector $\mathbf{0}$ is assigned to the distinguished edge connecting the vertices $0$ and $4$. 

\section{Catalan numbers: polygon triangulations and binary trees}\label{sec_Catalan_numbers}
There are several equivalent definitions of \textit{Catalan numbers} $C_n=\frac{1}{n+1}\binom{2n}{n}$ as is provided in~\cite{Stanley_Catalan}. 
In this section, we recall two combinatorial models of Catalan numbers: triangulations of polygons and binary trees. 

We first recall the polygon triangulation model. Let $\polygon{n+2}$ denote a convex polygon in the plane with $n+2$ vertices (or \textit{convex $(n+2)$-gon} for simplicity).
We label the vertices from $0$ to $n+1$ in counterclockwise order.
A \textit{triangulation} of $\polygon{n+2}$ is a decomposition of $\polygon{n+2}$ into a set of $n$ triangles by adding $n-1$ diagonals of $\polygon{n+2}$ which do not intersect in their interiors. We also mean by a triangulation of $\polygon{n+2}$ the set of the boundary edges of the $n$ triangles.\footnote{In~\cite{Stanley_Catalan}, a \textit{triangulation} of $\polygon{n+2}$ is defined to be a set of $n-1$ diagonals of $\polygon{n+2}$ which do not intersect in their interiors.}  For example, we present triangulations of $\polygon{5}$ in Figure~\ref{fig_triangulation_5gon}. The number of triangulations of $\polygon{n+2}$ is known to be the Catalan number $C_n$.

\begin{figure}[hbtp]
	\begin{tikzpicture}[x=10mm, y=5mm, label distance = 1mm]
	\foreach \x in {2,...,5,6}{
		\coordinate  (a\x)   at (-18-\x*72:1cm)  ;	
	} 

\draw[thick] (a2)--(a3)--(a4)--(a5)--(a6)--(a2);
\draw[thick] (a2)--(a4)
(a4)--(a6);

\node[right] at (a4) {$4$};
\node[right] at (a5) {$3$};
\node[below] at (a6) {$2$};
\node[left] at (a2) {$1$};
\node[left] at (a3) {$0$};
	\end{tikzpicture}\hspace{0.1cm} %
		\begin{tikzpicture}[x=10mm, y=5mm, label distance = 1mm]
	\foreach \x in {2,...,5,6}{
		\coordinate  (a\x)   at (-18-\x*72:1cm)  ;	
	} 
	
	\draw[thick] (a2)--(a3)--(a4)--(a5)--(a6)--(a2);
	\draw[thick] (a4)--(a2)
	(a5)--(a2);
	
\node[right] at (a4) {$4$};
\node[right] at (a5) {$3$};
\node[below] at (a6) {$2$};
\node[left] at (a2) {$1$};
\node[left] at (a3) {$0$};
	\end{tikzpicture}%
	\hspace{0.1cm} %
	\begin{tikzpicture}[x=10mm, y=5mm, label distance = 1mm]
	\foreach \x in {2,...,5,6}{
		\coordinate  (a\x)   at (-18-\x*72:1cm)  ;	
	} 
	
	\draw[thick] (a2)--(a3)--(a4)--(a5)--(a6)--(a2);
	\draw[thick] (a5)--(a3)
	(a5)--(a2);
	
\node[right] at (a4) {$4$};
\node[right] at (a5) {$3$};
\node[below] at (a6) {$2$};
\node[left] at (a2) {$1$};
\node[left] at (a3) {$0$};
	\end{tikzpicture}		\hspace{0.1cm} %
	\begin{tikzpicture}[x=10mm, y=5mm, label distance = 1mm]
	\foreach \x in {2,...,5,6}{
		\coordinate  (a\x)   at (-18-\x*72:1cm)  ;	
	} 
	
	\draw[thick] (a2)--(a3)--(a4)--(a5)--(a6)--(a2);
	\draw[thick] (a5)--(a3)
	(a3)--(a6);
	
\node[right] at (a4) {$4$};
\node[right] at (a5) {$3$};
\node[below] at (a6) {$2$};
\node[left] at (a2) {$1$};
\node[left] at (a3) {$0$};
	\end{tikzpicture}%
	\hspace{0.1cm} %
	\begin{tikzpicture}[x=10mm, y=5mm, label distance = 1mm]
	\foreach \x in {2,...,5,6}{
		\coordinate  (a\x)   at (-18-\x*72:1cm)  ;	
	} 
	
	\draw[thick] (a2)--(a3)--(a4)--(a5)--(a6)--(a2);
	\draw[thick] (a4)--(a6)
	(a3)--(a6);
	
\node[right] at (a4) {$4$};
\node[right] at (a5) {$3$};
\node[below] at (a6) {$2$};
\node[left] at (a2) {$1$};
\node[left] at (a3) {$0$};
	\end{tikzpicture}
	\caption{Triangulations of $\polygon{5}$.}\label{fig_triangulation_5gon}
\end{figure}

The \textit{binary trees} provide another combinatorial model for Catalan numbers (see~\cite[Theorem~1.5.1]{Stanley_Catalan}). 
A binary tree is defined recursively as follows. The empty set $\emptyset$ is a binary tree. Otherwise, a binary tree has a \textit{root vertex} $v$, a \textit{left  subtree} $\B_1$, and a \textit{right subtree} $\B_2$, both of which are binary trees. 
We draw a binary tree by putting the root vertex $v$ at the top, the left subtree $\B_1$ below and to the left of $v$, and the right subtree $\B_2$ below and to the right of $v$. Moreover, we draw an edge from $v$ to the root of each nonempty $\B_i$. 
Hence, each vertex of a binary tree is connected to at most two children, which are called the \textit{left child} and the \textit{right child} (one or both can be empty).
See Figure~\ref{fig_binary_tree_v3} for binary trees with three vertices. We draw additional circles to decorate the root vertices.
\begin{figure}[h]
		\begin{tikzpicture}[every node/.style = {circle, fill=black,  inner sep = 1.5pt, },
	level distance=5mm,	
	level 1/.style={sibling distance=7mm},	
	level 2/.style={sibling distance=5mm},
	level 3/.style={sibling distance=5mm}	
	]
	
	\node[black, draw, circle, inner sep = 1.5 pt, fill=black, double]  {}
		child[missing] {}
		child{
			node {}
			child[missing] {}
			child{
				node {}
		}
};
	\end{tikzpicture}\hspace{1cm}%
		\begin{tikzpicture}[every node/.style = {circle, fill=black,  inner sep = 1.5pt, },
	level distance=5mm,	
	level 1/.style={sibling distance=7mm},	
	level 2/.style={sibling distance=5mm},
	level 3/.style={sibling distance=5mm}	
	]
	
	\node[black, draw, circle, inner sep = 1.5 pt, fill=black, double]  {}
	child[missing] {}
	child{
		node {}
		child{
			node {}
		}
		child[missing] {}	
	};
	\end{tikzpicture}%
\hspace{1cm}%
	\begin{tikzpicture}[every node/.style = {circle, fill=black,  inner sep = 1.5pt, },
	level distance=5mm,	
	level 1/.style={sibling distance=7mm},	
	level 2/.style={sibling distance=5mm},
	level 3/.style={sibling distance=5mm}	
	]
	
	\node[black, draw, circle, inner sep = 1.5 pt, fill=black, double]  {}
	child{
		node {}
		child[missing] {}
		child{
			node {}
		}
	}	
	child[missing] {}	;
	\end{tikzpicture}\hspace{1cm}%
	\begin{tikzpicture}[every node/.style = {circle, fill=black,  inner sep = 1.5pt, },
	level distance=5mm,	
	level 1/.style={sibling distance=7mm},	
	level 2/.style={sibling distance=5mm},
	level 3/.style={sibling distance=5mm}	
	]
	
	\node[black, draw, circle, inner sep = 1.5 pt, fill=black, double]  {}
	child{
		node {}
		child{
			node {}
		}
		child[missing] {}	
	}	
	child[missing] {}	;
	\end{tikzpicture}\hspace{1cm}%
\raisebox{1.5em}{	\begin{tikzpicture}[every node/.style = {circle, fill=black,  inner sep = 1.5pt, },
	level distance=5mm,	
	level 1/.style={sibling distance=7mm},	
	level 2/.style={sibling distance=5mm},
	level 3/.style={sibling distance=5mm}	
	]
	
	\node[black, draw, circle, inner sep = 1.5 pt, fill=black, double]  {}
	child{
		node {}
	}	
	child{
		node{}
	}	;
	\end{tikzpicture}}
\caption{Binary trees with three vertices}\label{fig_binary_tree_v3}
\end{figure}
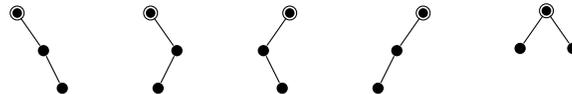

As we already mentioned, there is a bijective correspondence between the set of triangulations~$T$ of a convex polygon $\polygon{n+2}$ with $n+2$ vertices and that of binary trees $\B_T$ with $n$ vertices. We recall the construction of a bijection from triangulations $T$ of $\polygon{n+2}$ to binary trees $\B_T$. 
Let $e$ be an edge connecting $0$ and $n+1$. (Note that one may choose any edge of the polygon to get a bijective correspondence.) We associate a vertex of 
$\B_T$ with a triangle in $T$. The root vertex $v$ of~$\B_T$ corresponds to the triangle of which $e$ is an edge. 
Here, we notice that for the boundary edge~$e$ of $\polygon{n+2}$, there uniquely exists a triangle containing $e$ in the triangulation $T$ of $\polygon{n+2}$. Hence, the root vertex of $\B_T$ is well-defined.

For the triangle containing the edge $e$, we denote the remaining edges by $f_{\ll}(e)$ and $f_{\rr}(e)$ such that $f_{\ll}(e)$, $f_{\rr}(e)$, and $e$ are placed  counterclockwise. 
The vertex associated with the other triangle containing ~$f_{\ll}(e)$ becomes the left child of the root vertex $v$ of $\B_T$, and similarly, the other triangle containing $f_{\rr}(e)$ defines the right child of  the root vertex $v$ of $\B_T$. Here, if the triangle corresponding to $v$ is the only triangle containing~$f_{\ll}(e)$ (i.e., $f_{\ll}$(e) is a boundary edge), then the left subtree of $v$ is the empty set. Similarly, we also set the right subtree of $v$ is the empty set if $f_{\rr}(e)$ is a boundary edge.
Continuing this process, we obtain a binary tree $\B_T$. 
In Figure~\ref{fig_tree_and_triangulation}, we draw a binary tree associated with a triangulation, where the triangle containing the edge $e$ is colored in yellow and the root of $\B_T$ is decorated with an additional circle.

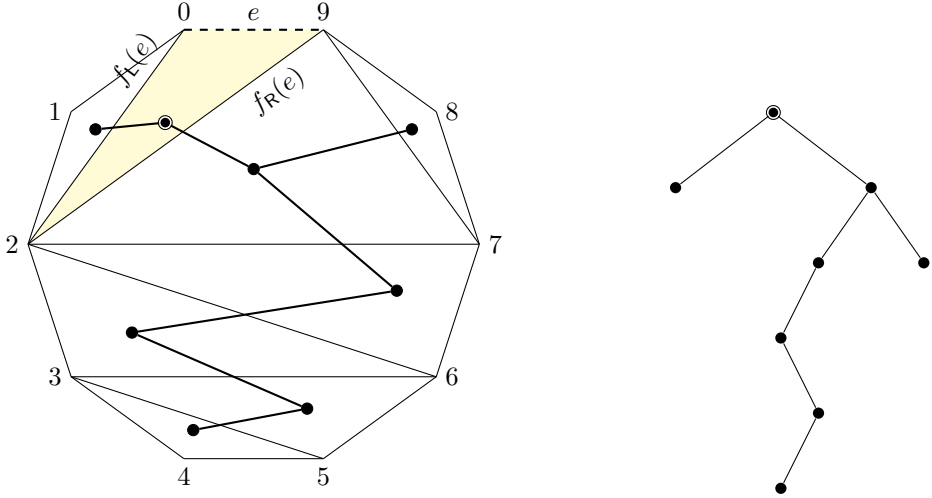
\begin{figure}[h]
\begin{center}
	\begin{tikzpicture}
\foreach \x in {2,...,11}{
	\coordinate (\x)  at (3*36-\x*36:3cm) ;	
}

\node[above] at (11) {$9$};
\foreach \x/\y in {2/8,3/7,4/6}{
	\node[right] at (\x) {$\y$};
}
\foreach \x/\y in {5/5,6/4}{
	\node[below] at (\x) {$\y$};
}
\foreach \x/\y in {7/3,8/2,9/1}{
	\node[left] at (\x) {$\y$};
}
\node[above] at (10) {$0$};

\fill[yellow!20!white] (10)--(11)--(8);

\draw[thick, dashed] (10)--(11) node[above, midway] {$e$};
\draw (11)--(8) node[below, pos = 0.2, sloped] {$f_{\rr}(e)$};
\draw
(11) edge (2)
(11) edge (3)
(3) edge (4)
(5) edge (6)
(8) edge (9)
(4) edge (5)
(4) edge (7);
\draw  (10) -- (8) node[above, pos=0.2, sloped] {$f_{\ll}(e)$};
\draw
(10) edge (9)
(8) edge (7)
(8) edge (4)
(8) edge (3)
(7) edge (6)
(7) edge (5)
(3) edge (2) ;
\node[black, draw, circle, inner sep = 1.5 pt, fill=black, double]  (v1) at (-6.5*36:2cm) {};
\node[black, draw, circle, inner sep = 1.5pt, fill=black]  (v2) at (-6*36:2.6cm) {};
\node[black, draw, circle, inner sep = 1.5pt, fill=black]  (v3) at (2.5*36:1cm) {}; 
\node[black, draw, circle, inner sep = 1.5pt, fill=black]  (v4) at (1*36:2.6cm) {};
\node[black, draw, circle, inner sep = 1.5pt, fill=black]  (v5) at (-0.5*36:2cm) {};
\node[black, draw, circle, inner sep = 1.5pt, fill=black]  (v6) at (-4*36:2cm) {};
\node[black, draw, circle, inner sep = 1.5pt, fill=black]  (v7) at (-2*36:2.3cm) {};
\node[black, draw, circle, inner sep = 1.5pt, fill=black]  (v8) at (-3*36:2.6cm) {};	

\draw[black, thick] (v1)--(v2)
(v1)--(v3)
(v3)--(v4)
(v3)--(v5)
(v5)--(v6)
(v6)--(v7)
(v7)--(v8);	 %
\end{tikzpicture} \hspace{2cm}%
\begin{tikzpicture}[every node/.style = {circle, fill=black,  inner sep = 1.5pt, },
		level distance=5mm,	
		level 1/.style={sibling distance=13mm},	
		level 2/.style={sibling distance=7mm},
		level 3/.style={sibling distance=5mm},
		scale=2	
		]
		\node[black, draw, circle, inner sep = 1.5 pt, fill=black, double]{}
		child{
			node{}
		}
		child{
			node{}
			child{
				node{}
				child{
					node{}
					child[missing] {}
					child{
						node{}
						child{
							node{}
						}
						child[missing] {}
					}
				}
				child[missing] {}
			}
			child{
				node{}
			}
		};
		
		\end{tikzpicture}
\end{center}
\caption{A triangulated polygon and the associated binary tree.}\label{fig_tree_and_triangulation}
\end{figure}

\section{Left and right trees of polygon triangulations}\label{sec_left_and_right_trees}

In this section, we introduce the \textit{left tree} and the \textit{right tree} of a polygon triangulation~$T$. Moreover, we define vectors obtained from the left and right trees, and study their properties.
 
Take the distinguished edge $e$ connecting $0$ and $n+1$. 
We inductively define a map \linebreak $F \colon T \setminus \{e\} \to \{\ll, \rr\}$ as follows, where the triangulation $T$ is regarded as the set of the boundary edges of the $n$ triangles.
For the triangle containing the edge~$e$, we denote the remaining edges by $f_{\ll}(e)$ and $f_{\rr}(e)$ such that $f_{\ll}(e)$, $f_{\rr}(e)$, and $e$ are placed counterclockwise. Then we set $F(f_{\ll}(e)) = \ll$ and $F(f_{\rr}(e)) = \rr$. 
When we remove the edge~$e$ from~$T$, we obtain two triangulated polygons, say $Q_{\ll}$ and ${Q}_{\rr}$, with one common vertex.\footnote{If one of $Q_{\ll}$ and $Q_{\rr}$, say $Q_{\bullet}$, is a single edge, then we do not apply the following process to $Q_{\bullet}$.} 
For each triangulation, we continue the same process. That is, for the triangulation $Q_\bullet$ having the distinguished edge $f_\bullet(e)$, we consider the triangle containing $f_\bullet(e)$, and denote the remaining edges of the triangle by $f_{\ll}(f_\bullet(e))$ and $f_{\rr}(f_\bullet(e))$ such that $f_{\ll}(f_{\bullet}(e))$, $f_{\rr}(f_{\bullet}(e))$, and $f_{\bullet}(e)$ are placed counterclockwise for each $\bullet =\ll,\rr$. 
Then we define $F(f_{\ll}(f_{\bullet}(e))) = \ll$ and $F(f_{\rr}(f_{\bullet}(e))) = \rr$. Continuing this process, we obtain a map $F \colon T \setminus \{e\} \to \{\ll, \rr\}$. Indeed, the map produces a partition of the set $T \setminus \{e\}$.

\begin{definition}
	Let $T$ be a triangulation of $\polygon{n+2}$.
We define two rooted graphs, called the \textit{left graph} $\lT$ and the \textit{right graph}~$\rT$ of~$T$ as follows.  
\begin{itemize}
	\item $V(\lT) = {\{0,1,\dots,n\}}$ and $0$ is the root vertex; $E(\lT) = \{ f \in T \mid F(f) = \ll \}$.
	\item 	$V(\rT) ={\{1,2,\dots,n+1\}}$ and $n+1$ is the root vertex; $E(\rT) = \{f \in T \mid F(f) = \rr\}$.
\end{itemize}
\end{definition}

\begin{example}\label{example_LR_tree_from_triangulation}
	Let $T$ be a triangulation of $\polygon{10}$ given in Figure~\ref{fig_tree_and_triangulation}, which is the following set of edges:
	\[
	T = \{\{i, i+1\} \mid 0 \leq i \leq 8\} \cup \{ 0,9\} \cup \{ \{ 0,2\}, \{2,6\}, \{2,7\}, \{2,9\}, \{3,6\}, \{3,5\}, \{7,9\}\}.
	\]
By taking the edge $e  = \{0,9\}$, we get $f_{\ll}(e) = \{0,2\}$ and $f_{\rr}(e) = \{ 2,9\}$, so we have $F(\{0,2\}) = \ll$ and $F(\{2,9\}) = \rr$. By removing the edge $e$, we get a triangulation ${Q}_{\ll}$ of $\polygon{3}$ and a triangulation~${Q}_{\rr}$ of~$\polygon{8}$. For the edge $f_{\ll}(e) = \{ 0,2\}$, we have $f_{\ll}(\{0,2\}) = \{0,1\}$ and $f_{\rr}(\{0,2\}) = \{1,2\}$. On the other hand, for the edge $f_{\rr}(e) = \{2,9\}$, we have $f_{\ll}(\{2,9\}) = \{2,7\}$ and $f_{\rr}(\{2,9\}) = \{7,9\}$. Accordingly, we get 
\[
F(\{0,1\}) = F(\{2,7\}) = \ll, \quad F(\{1,2\}) = F(\{7,9\}) = \rr.
\]
Continuing this process, we obtain the following. 
\begin{center}
\begin{tabular}{l|cccccccc}
\toprule 
$f$ & $\{0,1\}$ &$\{ 0,2\}$ & $\{2,3\}$ & $\{3,4\}$ &$\{3,5\}$   & $\{2,6\}$ &$ \{2,7\} $
& $\{7,8\}$ \\
$F(f)$& $\ll$& $\ll$ & $\ll$ & $\ll$ & $\ll$ & $\ll$ & $\ll$ & $\ll$\\
\midrule
$f$ & $\{1,2\}$  &$\{2,9\}$ &$\{3,6\}$ & $\{4,5\}$  & $\{5,6\}$  & $\{6,7\}$ &$ \{7,9\}$ 
& $\{8,9\}$  \\
$F(f)$  & $\rr$  & $\rr$ & $\rr$  & $\rr$ & $\rr$  & $\rr$ & $\rr$  & $\rr$ \\
\bottomrule
\end{tabular}
\end{center}

\medskip

We depict the left graph and the right graph of this triangulation $T$ of $\polygon{10}$ in Figure~\ref{fig_atom_and_coatom_together}.
The left graph $\lT$ is colored in blue while the right graph $\rT$ is colored in red and dashed.
\end{example}
\begin{remark}
	There is a bijective correspondence between the set of triangulations of $\polygon{n+2}$ and that of full binary trees with $2n+1$ vertices (or $n+1$ endpoints) as shown in~\cite[{\#5 in \S2}]{Stanley_Catalan}. Here, we say a binary tree is \emph{full} if every vertex has zero or two children. 
	For a triangulation~$T$ of~$\polygon{n+2}$, the corresponding full binary tree $\mathcal{C}_T$ is obtained by adding leaves to $\mathcal{B}_T$ as follows. Recall the construction of~$\mathcal{B}_T$ that the left (resp. right) subtree of a vertex $v$ becomes the empty set whenever the triangle (in $T$) corresponding to $v$ is formed by edges $f_{\ll}(f), f_{\rr}(f), f$ and the edge $f_{\ll}(f)$ (resp. $f_{\rr}(f)$) is on the boundary of $\polygon{n+2}$. 
To construct $\mathcal{C}_T$, we add the left (resp. right) leaf vertex of a vertex $v$ of $\mathcal{B}_T$ if the edge $f_{\ll}(f)$ (resp. $f_{\rr}(f)$) is on the boundary of $\polygon{n+2}$, where the edges $f_{\ll}(f), f_{\rr}(f), f$ form the triangle (in $T$) corresponding to $v$. 
By drawing this full binary tree~$\mathcal{C}_T$ on the triangulation~$T$, one can see that the edges of the left graph $\lT$ (resp. the right graph~$\rT$) intersect the edges of $\mathcal{C}_T$ connecting a vertex and its left child (resp. its right child). See Figure~\ref{fig_atom_and_coatom_together}.
\end{remark}
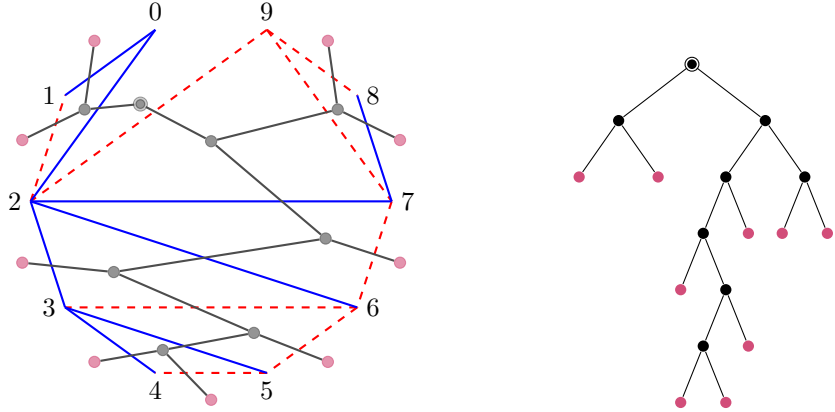
\begin{figure}[hbt]
	\begin{center}
		\begin{tikzpicture}[x=10mm, y=5mm, label distance = 1mm, scale = 0.8]
\foreach \x in {1,2,...,11}{
	\coordinate (\x)  at (3*36-\x*36:3cm) ;	
}

\node[above] at (11) {$9$};
\foreach \x/\y in {2/8,3/7,4/6}{
	\node[right] at (\x) {$\y$};
}
\foreach \x/\y in {5/5,6/4}{
	\node[below] at (\x) {$\y$};
}
\foreach \x/\y in {7/3,8/2,9/1}{
	\node[left] at (\x) {$\y$};
}
\node[above] at (10) {$0$};

		\draw[red, dashed, thick ] 
		(1) edge (2) 
		(1) edge (3)
		(1) edge (8)
		(3) edge (4)
		(5) edge (6)
		(8) edge (9)
		(4) edge (5)
		(4) edge (7);
		
		\draw[blue, thick] (10) edge (8)
		(10) edge (9)
		(8) edge (7)
		(8) edge (4)
		(8) edge (3)
		(7) edge (6)
		(7) edge (5)
		(3) edge (2) ;
		
\begin{scope}[every node/.style={opacity=0.6}]
		\node[black!70, draw, circle, inner sep = 1.5 pt, fill=black!70, double]  (v1) at (-6.5*36:2cm) {};
		\node[black!70, draw, circle, inner sep = 1.5pt, fill=black!70]  (v2) at (-6*36:2.6cm) {};
		\node[black!70, draw, circle, inner sep = 1.5pt, fill=black!70]  (v3) at (2.5*36:1cm) {}; 
		\node[black!70, draw, circle, inner sep = 1.5pt, fill=black!70]  (v4) at (1*36:2.6cm) {};
		\node[black!70, draw, circle, inner sep = 1.5pt, fill=black!70]  (v5) at (-0.5*36:2cm) {};
		\node[black!70, draw, circle, inner sep = 1.5pt, fill=black!70]  (v6) at (-4*36:2cm) {};
		\node[black!70, draw, circle, inner sep = 1.5pt, fill=black!70]  (v7) at (-2*36:2.3cm) {};
		\node[black!70, draw, circle, inner sep = 1.5pt, fill=black!70]  (v8) at (-3*36:2.6cm) {};	
		
	\foreach \x/\y in { 2/8, 3/7, 4/6, 5/5, 6/4, 7/3, 8/2, 9/1, 10/0}{
	\node[purple!70, draw, circle, inner sep = 1.5pt, fill=purple!70]  (vv\x)  at (3*36-\x*36+18:3.3cm)  {};	
} 		
		
		\draw[black!70, thick] (v1)--(v2)
		(v1)--(v3)
		(v3)--(v4)
		(v3)--(v5)
		(v5)--(v6)
		(v6)--(v7)
		(v7)--(v8);	 %
		\draw[black!70, thick] (v2)--(vv10)
		(v2)--(vv9)
		(v4)--(vv2)
		(v4)--(vv3)
		(v5)--(vv4)
		(v6)--(vv8)
		(v7)--(vv5)
		(v8)--(vv6)
		(v8)--(vv7);
\end{scope}
		\end{tikzpicture}\hspace{2cm}%
		\begin{tikzpicture}[nb/.style = {circle, fill=black,  inner sep = 1.5pt, },
		nc/.style = {circle, fill=purple!70,  inner sep = 1.5pt, },
		level distance=5mm,	
		level 1/.style={sibling distance=13mm},	
		level 2/.style={sibling distance=7mm},
		level 3/.style={sibling distance=4mm},
		scale=1.5
		]
		\node[black, draw, circle, inner sep = 1.5 pt, fill=black, double]{}
		child{
			node[nb]{}
			child{
				node[nc] {}}
			child{
				node[nc] {}}	
		}
		child{
			node[nb]{}
			child{
				node[nb]{}
				child{
					node[nb]{}
					child {
						node[nc] {}}
					child{
						node[nb]{}
						child{
							node[nb]{}
							child{
								node[nc] {}
							}
							child{
								node[nc] {}
							}
						}
						child {
							node[nc] {}}
					}
				}
				child {
					node[nc] {} }
			}
			child{
				node[nb]{}
				child{
					node[nc] {}
				}
				child{
					node[nc] {}
				}
			}
		};
		
		\end{tikzpicture}
	\end{center}
	\caption{The left graph (colored in blue) and the right graph (colored in red and dashed) of the triangulation of $\polygon{10}$ in Figure~\ref{fig_tree_and_triangulation}; and the corresponding full binary tree $\mathcal{C}_T$. Here, we fill the vertices $V(\mathcal{C}_T) \setminus V(\mathcal{B}_T)$ with purple.}\label{fig_atom_and_coatom_together}
\end{figure}

From the definition of the left and the right graphs, 
we obtain the following lemma which proves that the left and the right graphs are indeed \emph{trees}.  
\begin{lemma}\label{lemma_uniqueness_kl_k_kr}
	Let $T$ be a triangulation of $\polygon{n+2}$.
	For each $1 \leq k \leq n$, there is only one edge $\{k_{\ll}, k\}$ with $k_{\ll} < k$ in the left graph $\lT$ and similarly there is only one edge $\{k, k_{\rr}\}$ with $k < k_{\rr}$ in the right graph $\rT$. Moreover, $k_{\ll}, k, k_{\rr}$ are the vertices of a triangle in $T$. Indeed, both graphs $\lT$ and $\rT$ are trees.
\end{lemma}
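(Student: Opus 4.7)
The plan is to proceed by strong induction on $n$, exploiting the recursive construction of the map $F$. The base case $n=1$ is immediate: $\polygon{3}$ is a single triangle with vertex set $\{0,1,2\}$, and the top-level assignment directly gives $k_\ll=0$, $k_\rr=2$ for the unique $k=1$, forming the triangle $\{0,1,2\}$; each of $\lT$ and $\rT$ is a single edge and hence a tree.

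For the inductive step I would let $c$ be the third vertex of the unique triangle of $T$ containing the distinguished edge $e=\{0,n+1\}$, so $1\le c\le n$ and by definition $f_\ll(e)=\{0,c\}$, $f_\rr(e)=\{c,n+1\}$, $F(\{0,c\})=\ll$, $F(\{c,n+1\})=\rr$. Removing $e$ decomposes $T$ into two smaller triangulations: $T_L$ on the sub-polygon with vertex set $\{0,1,\dots,c\}$ and distinguished edge $\{0,c\}$, and $T_R$ on the sub-polygon with vertex set $\{c,c+1,\dots,n+1\}$ and distinguished edge $\{c,n+1\}$. By construction, the restriction of $F$ to $T_L\setminus\{\{0,c\}\}$ coincides with the labeling produced by the recursive procedure applied to $T_L$, and analogously for $T_R$. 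I would then apply the inductive hypothesis to each (the statement is invariant under order-preserving relabeling of the vertex set).

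The verification then splits into three cases for $k\in\{1,\dots,n\}$. If $k<c$, the vertex $k$ belongs to $T_L$ and the inductive hypothesis yields unique edges $\{k_\ll,k\}\in E(\lT)$ and $\{k,k_\rr\}\in E(\rT)$ inside $T_L$, together with the required triangle in $T_L\subset T$; the case $k>c$ is symmetric via $T_R$. For $k=c$ the top-level step directly gives $c_\ll=0$, $c_\rr=n+1$, and the triangle $\{0,c,n+1\}$. The delicate point—which I expect to be the main obstacle—is uniqueness across the three parts: I must verify that no further $\ll$-edge of $T$ with $k$ as larger endpoint (respectively $\rr$-edge with $k$ as smaller endpoint) arises from a different part of the recursion. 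For $k<c$ this is automatic because every edge of $T\setminus\{e\}$ outside $T_L$ involves only vertices in $\{0,c,c+1,\dots,n+1\}$ and hence cannot be incident to $k$. For $k=c$ the key observations are that the inductive left tree of $T_L$ has vertex set $\{0,\dots,c-1\}$, so $c$ cannot be incident to any $\ll$-edge coming from $T_L$, and that $c$ plays the role of the smallest vertex in $T_R$, so it is never the smaller endpoint of an $\rr$-edge coming from $T_R$. Tracking precisely where each inductive left and right tree lives is the one place in the argument that is not purely mechanical.

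Finally, for the tree property, the above yields that $\lT$ has vertex set $\{0,\dots,n\}$ and precisely $n$ edges, one $\{k_\ll,k\}$ for each $k\in\{1,\dots,n\}$. Following the edge to the smaller endpoint from any vertex produces a strictly decreasing sequence of labels that must terminate at $0$, so $\lT$ is connected; a connected graph on $n+1$ vertices with $n$ edges is a tree. The identical argument, with \emph{smaller} replaced by \emph{larger} and $0$ replaced by $n+1$, shows that $\rT$ is a tree.
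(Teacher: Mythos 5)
Your proof is correct, but it follows a genuinely different route from the paper's. The paper argues directly: for each $k\in[n]$ there is a unique triangle of $T$ having $k$ as its middle vertex, say with vertices $k_\ell<k<k_r$, and the recursive definition of $F$ puts $\{k_\ell,k\}$ in $\lT$ and $\{k,k_r\}$ in $\rT$; since $\lT$ and $\rT$ each have exactly $n$ edges and these $n$ candidate edges are pairwise distinct (they have distinct larger, resp.\ smaller, endpoints), they exhaust the edge sets, which gives both existence and uniqueness at once. You instead run a structural induction along the recursion defining $F$, splitting $T$ at the triangle adjacent to the distinguished edge into $T_L$ and $T_R$, invoking the inductive hypothesis on each piece, and treating the shared vertex $c$ by hand. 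Your route is more self-contained --- it derives rather than presupposes the ``unique middle triangle'' fact and the fact that each triangle is always processed with its two extremal vertices as the distinguished edge, both of which the paper's short proof leans on implicitly --- at the cost of some extra bookkeeping at the seam. The final tree-ness argument (a monotone path to the root plus an edge count) is the same in both. One tiny slip in your write-up: edges of $T\setminus\{e\}$ outside $T_L$ lie in $T_R$, whose vertex set is $\{c,c+1,\dots,n+1\}$, not $\{0,c,c+1,\dots,n+1\}$; this is harmless since a $k$ with $1\le k<c$ lies in neither set anyway.
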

\begin{proof}
For each $1\le k\le n$, there is a unique triangle in $T$ which has $k$ as the middle vertex. Let $k_{\ell}<k<k_{r}$ be the vertices of the triangle.  By definition, the edge $\{k_{\ell},k\}$ is in the left tree $\lT$ while the edge $\{k,k_r\}$ is in the right tree $\rT$.  Since there are exactly $n$ edges in $\lT$ (resp. $\rT$), $\{k_\ell,k\}$ (resp. $\{k,k_r\}$) for $1\le k\le n$ provide all the edges in $\lT$ (resp. $\rT$).  This implies that $k_\ell$ (resp. $k_r$) is the desired $k_{\ll}$ (resp. $k_{\rr}$). 

We claim that $\lT$ and $\rT$ are trees. First consider the left graph $\lT$. 
Since there is an edge $\{k_{\ll}, k\}$ with $k_{\ll} < k$ for each $1 \le k \le n$, any vertex $k$ is connected to the root vertex $0$, so the graph $\lT$ is connected. Moreover, the number of edges is $n$. Accordingly, the left graph $\lT$ is a tree. Similarly, the right graph $\rT$ is connected and there are $n$ edges, so $\rT$ is a tree. This proves the lemma.
\end{proof}

As is proved in Lemma~\ref{lemma_uniqueness_kl_k_kr}, both graphs $\lT$ and $\rT$ are trees. So we will call them the \emph{left tree} and the \emph{right tree}, respectively. 

\begin{remark}\label{rmk_lT_rT_noncrossing}
	The set of left trees of polygon triangulations provides a model for the Catalan numbers.
More precisely, as is explained in~\cite[{\#21 in \S2}]{Stanley_Catalan}, there is a bijective correspondence between the set of triangulations and the set of noncrossing increasing trees on the vertex set $\{0,1,\dots,n\}$. The latter set is a set of trees whose vertices are arranged in increasing order around a circle such that no edges cross in their interior, and such that all paths from the root vertex $0$ are increasing. 
	On the other hand, the set of right trees is the set of noncrossing decreasing trees on the vertex set $\{1,2,\dots,n+1\}$ with the root vertex $n+1$.
	Indeed, we have 
	\[
	\#\{ \lT \mid T \text{ a triangulation of $\polygon{n+2}$}\} = \#\{ \rT \mid T \text{ a triangulation of $\polygon{n+2}$}\}= C_n.
	\]
\end{remark}

For each $k=1,\dots,n$, we define 
\begin{enumerate}
	\item $\mathbf{p}_k=e_{k_{\ll}+1}-e_{k+1}$,
	\item $\mathbf{q}_k=-e_k+e_{k_{\rr}}$,
\end{enumerate}
where $\{e_1,\dots,e_{n+1}\}$ is the standard basis of $\Z^{n+1}$. The vectors $\mathbf{p}_1,\dots, \mathbf{p}_n$ (similarly, $\mathbf{q}_1,\dots,\mathbf{q}_n$) form a basis of the sublattice $M$ of $\Z^{n+1}$ where
\[
M=\{(x_1,\dots,x_{n+1})\in \Z^{n+1}\mid x_1+\dots+x_{n+1}=0\}. 
\]

Through the dot product on $\Z^{n+1}$, the dual lattice $N$ of~$M$ can be identified with the quotient lattice of $\Z^{n+1}$ by the sublattice generated by $(1,\dots,1)$, i.e. 
\[
N=\Z^{n+1}/\Z(1,\dots,1).
\]
Let $\varpi_i$ $(i=0,1,\dots,n+1)$ be the quotient image of $\sum_{k=1}^i e_k$ in $N$. Then $\{\varpi_1,\dots,\varpi_n\}$ is a basis of $N$ and $\varpi_0=\varpi_{n+1}=\boldsymbol{0}$ by definition.

To each edge $\{a,b\}\in T$ with $a <b$, we assign the vector $\varpi_a-\varpi_b$ and denote it by $\mathbf{v}_a$ when $\{a,b\} \in \rT$ and $\mathbf{w}_b$ when $\{a,b\} \in \lT$, in other words, 
\begin{equation}\label{eq:defvkwk}
	\begin{split}
	\mathbf{v}_{k} &= \varpi_{k}- \varpi_{k_{\rr}},\\ 
	\mathbf{w}_k &= \varpi_{k_{\ll}} - \varpi_{k}, 
	\end{split}
\end{equation}
for $k=1,\dots,n$ by Lemma~\ref{lemma_uniqueness_kl_k_kr}.
Note that the zero vector $\mathbf{0}$ is assigned to the distinguished edge ${\{0,n+1\}}$ because $\varpi_0=\varpi_{n+1}=\mathbf{0}$.

With this understood, we have the following. 

\begin{proposition}\label{prop_dual}
	Let $\langle\ ,\ \rangle$ be the pairing between $N$ and $M$ induced from the dot product on~$\Z^{n+1}$. Then we have
	\[
	\langle\mathbf{v}_i,\mathbf{p}_j\rangle=\langle \mathbf{w}_i,\mathbf{q}_j\rangle=\delta_{ij},
	\]
	where $\delta_{ij}$ denotes the Kronecker delta. 
\end{proposition}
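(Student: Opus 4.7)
My plan is a direct computation of both pairings after choosing the representative $\varpi_a=\sum_{s=1}^{a}e_s\in\Z^{n+1}$ for each class in $N$. Since every vector of $M$ has coordinate sum zero, the dot product $\langle\varpi_a,x\rangle$ is independent of the choice of representative, and in particular $\langle\varpi_a,e_b\rangle=[b\le a]$ in Iverson-bracket notation.

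I would start with $\langle\mathbf{v}_i,\mathbf{p}_j\rangle$. Expanding and collecting the four resulting Iverson brackets gives
\begin{equation*}
\langle\mathbf{v}_i,\mathbf{p}_j\rangle=[j_{\ll}<i\le j]-[j_{\ll}<i_{\rr}\le j].
\end{equation*}
The diagonal case $i=j$ is immediate from $j_{\ll}<j<j_{\rr}$, giving $1-0=1$. When $i\ne j$, both brackets vanish unless $j_{\ll}<i<j$, so only this range matters. The geometric step is to show that for $j_{\ll}<i<j$ one also has $j_{\ll}<i_{\rr}\le j$, so that both brackets equal $1$ and cancel. This follows from the non-crossing property of the triangulation: by Lemma~\ref{lemma_uniqueness_kl_k_kr} the chord $\{j_{\ll},j\}$ lies in $\lT\subset T$ while $\{i,i_{\rr}\}$ lies in $\rT\subset T$; if $i_{\rr}>j$ held, the configuration $j_{\ll}<i<j<i_{\rr}$ would mean these two chords cross in the interior of $\polygon{n+2}$, contradicting the definition of a triangulation. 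Combined with the obvious $i_{\rr}>i>j_{\ll}$, this yields $j_{\ll}<i_{\rr}\le j$.

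The argument for $\langle\mathbf{w}_i,\mathbf{q}_j\rangle$ is entirely parallel. The same manipulation produces
\begin{equation*}
\langle\mathbf{w}_i,\mathbf{q}_j\rangle=[i_{\ll}<j\le i]-[i_{\ll}<j_{\rr}\le i];
\end{equation*}
the diagonal case is immediate, and for $i\ne j$ the only nontrivial range is $i_{\ll}<j<i$, in which the non-crossing of the chords $\{i_{\ll},i\}\in\lT$ and $\{j,j_{\rr}\}\in\rT$ again forces $j_{\rr}\le i$, producing the desired cancellation.

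The only non-bookkeeping ingredient is this short geometric appeal to non-crossing; everything else is a routine manipulation of Iverson brackets. No induction on the tree structure seems necessary, because the triangle around the middle vertex $i$ is precisely the local combinatorial data that the chord $\{j_{\ll},j\}$ constrains via Lemma~\ref{lemma_uniqueness_kl_k_kr}.
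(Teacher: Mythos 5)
Your approach --- expand the pairing into Iverson brackets, then invoke the non-crossing structure of the triangulation --- is essentially the same as the paper's. Your intermediate expression $[j_{\ll}<i\le j]-[j_{\ll}<i_{\rr}\le j]$ is just a different regrouping of the paper's $[\,i\le j<i_{\rr}\,]-[\,i\le j_{\ll}<i_{\rr}\,]$, and both proofs reduce the off-diagonal case to a single combinatorial observation about the triangulation. The diagonal case and the $\langle\mathbf w_i,\mathbf q_j\rangle$ computation are handled correctly and symmetrically.

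There is, however, a small gap in the clause ``both brackets vanish unless $j_{\ll}<i<j$.'' For the first bracket this is a tautology, but for $[j_{\ll}<i_{\rr}\le j]$ it is an assertion that needs proof, and your non-crossing argument only justifies the complementary range $j_{\ll}<i<j$. When $i<j_{\ll}$, non-crossing of $\{i,i_{\rr}\}$ with $\{j_{\ll},j\}$ forces $i_{\rr}\notin(j_{\ll},j)$ but does not by itself exclude $i_{\rr}=j$ (the two chords would merely share a vertex); and when $i=j_{\ll}$ the two chords already share the vertex $j_{\ll}$, so non-crossing of just these two says nothing. In both subcases one must bring in the third side $\{j_{\ll},j_{\rr}\}$ of the triangle at $j$: for instance if $i<j_{\ll}$ and $i_{\rr}=j$, then $\{i,j\}$ crosses $\{j_{\ll},j_{\rr}\}$ (one endpoint inside $(j_{\ll},j_{\rr})$, the other outside), which is impossible since $\{j_{\ll},j_{\rr}\}\in T$ (and it cannot be the distinguished edge because $i<j_{\ll}$ forces $j_{\ll}\ge 1$). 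A similar argument handles $i=j_{\ll}$. To be fair, the paper's own proof asserts the analogous interval-intersection fact ``from the construction of the left and right trees'' without detail, so your write-up is at a comparable level of rigor; but as stated the outside-range claim is not self-evident and the short fix above is worth adding.
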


\begin{proof} 
We prove the statement by providing $\langle\mathbf{v}_i,\mathbf{p}_j\rangle=\delta_{ij}$
and $\langle \mathbf{w}_i,\mathbf{q}_j\rangle=\delta_{ij}$. We first consider the pairing $\langle\mathbf{v}_i,\mathbf{p}_j\rangle$. 
	By definition of $\mathbf{v}_k$, $\mathbf{p}_k$ and $\varpi_i$, we have 
	\begin{equation} \label{eq:vipj}
		\langle \mathbf{v}_i,\mathbf{p}_j\rangle=\langle \varpi_i-\varpi_{i_{\rr}},e_{j_{\ll}+1}-e_{j+1}\rangle=\langle -(e_{i+1}+\cdots+e_{i_{\rr}}),e_{j_{\ll}+1}-e_{j+1}\rangle.
	\end{equation}
	On the other hand, we see from the construction of the left and right trees that 
	\[
	[i+1,i_{\rr}]\cap \{j_{\ll}+1,j+1\}=\begin{cases} \{j+1\}\quad &\text{when $i=j$},\\
	\emptyset \text{ or } \{j_{\ll}+1,j+1\}\quad &\text{when $i\not=j$}.\end{cases}
	\]
	This together with \eqref{eq:vipj} implies $\langle \mathbf{v}_i,\mathbf{p}_j\rangle =\delta_{ij}$. 
	The proof of the latter identity $\langle \mathbf{w}_i,\mathbf{q}_j\rangle=\delta_{ij}$ is similar to the above. Indeed, by definition of $\mathbf{w}_k$, $\mathbf{q}_k$ and $\varpi_i$, we have 
	\begin{equation} \label{eq:wiqj}
		\langle \mathbf{w}_i,\mathbf{q}_j\rangle=\langle \varpi_{i_{\ll}}-\varpi_{i},-e_{j}+e_{j_{\rr}}\rangle=\langle -(e_{i_{\ll}+1}+\cdots+e_{i}),-e_j+e_{j_{\rr}}\rangle.
	\end{equation}
	On the other hand, we see from the construction of the left and right trees that 
	\[
	[i_{\ll}+1,i]\cap \{j_{\rr},j\}=\begin{cases} \{j\}\quad &\text{when $i=j$},\\
	\emptyset \text{ or } \{j_{\rr},j\}\quad &\text{when $i\not=j$}.\end{cases}
	\]
	This together with \eqref{eq:wiqj} implies $\langle \mathbf{w}_i,\mathbf{q}_j\rangle =\delta_{ij}$. Hence the result follows.
\end{proof}
\begin{example}\label{example_vk_wk}
	In the case of Example~\ref{example_LR_tree_from_triangulation}, the vectors $\mathbf{p}_k, \mathbf{q}_k, \mathbf{v}_k, \mathbf{w}_k$ for $1\le k\le 8$ are given as follows: 
	\begin{center}
		\begin{tabular}{c|c|ll|ll}
			\toprule
			$k$ & $k_{\ll} < k < k_{\rr}$ & $\mathbf{p}_k$ & $\mathbf{q}_k$ & $\mathbf{v}_k$ & $\mathbf{w}_k$ \\
			\midrule 
			$1$ & $0 < 1 < 2$ & $e_1-e_2$ & $-e_1+e_2$ & $\varpi_1 - \varpi_2$ & $\varpi_0 - \varpi_1 = -\varpi_1$ \\
			$2$ & $0 < 2 < 9$ & $e_1-e_3$ & $-e_2+e_9$ & $\varpi_2 - \varpi_9 = \varpi_2$ & $\varpi_0-\varpi_2 = -\varpi_2$ \\
			$3$ & $2 < 3 < 6$ & $e_3-e_4$ & $-e_3+e_6$ & $\varpi_3 - \varpi_6$ & $\varpi_2 - \varpi_3$ \\
			$4$ & $3 < 4 < 5$ & $e_4-e_5$ & $-e_4+e_5$ & $\varpi_4 - \varpi_5$ & $\varpi_3 - \varpi_4$ \\
			$5$ &$3 < 5 < 6$ & $e_4-e_6$ & $-e_5+e_6$ & $\varpi_5 - \varpi_6 $ & $\varpi_3 - \varpi_5$ \\
			$6$ & $2 < 6 < 7$ & $e_3-e_7$ & $-e_6+e_7$ & $\varpi_6 - \varpi_7$ & $\varpi_2 - \varpi_6$ \\
			$7$ & $2 < 7 < 9$ & $e_3-e_8$ & $-e_7+e_9$ & $\varpi_7 - \varpi_9=\varpi_7$ & $\varpi_2 - \varpi_7$\\
			$8$ & $7 < 8 < 9$ & $e_8-e_9$ & $-e_8+e_9$ & $\varpi_8 - \varpi_9 = \varpi_8$ & $\varpi_7 - \varpi_8$ \\
			\bottomrule
		\end{tabular}
	\end{center}
	See Figure~\ref{fig_vectors_left_right_trees} for the assignment of the vectors $\mathbf{w}_k$'s and $\mathbf{v}_k$'s to the left and right trees. 
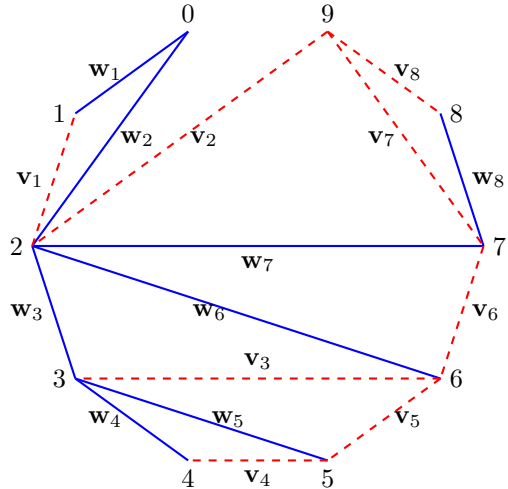
\begin{figure}[hbt]
\begin{center}
	\begin{tikzpicture}[x=10mm, y=5mm, label distance = 1mm, scale = 1]
	
\foreach \x in {1,2,...,11}{
	\coordinate (\x)  at (3*36-\x*36:3cm) ;	
} 

\node[above] at (11) {$9$};
\foreach \x/\y in {2/8,3/7,4/6}{
	\node[right] at (\x) {$\y$};
}
\foreach \x/\y in {5/5,6/4}{
	\node[below] at (\x) {$\y$};
}
\foreach \x/\y in {7/3,8/2,9/1}{
	\node[left] at (\x) {$\y$};
}
\node[above] at (10) {$0$};
	
\draw[blue, thick] (10)--(9) node[left, midway, black]{$\mathbf{w}_1$};
\draw[blue, thick] (10)--(8) node[right, midway, black]{$\mathbf{w}_2$};
\draw[blue, thick] (8)--(7) node[left, midway, black]{$\mathbf{w}_3$};
\draw[blue, thick] (8)--(4) node[left, midway, black]{$\mathbf{w}_6$};
\draw[blue, thick] (8)--(3) node[below, midway, black]{$\mathbf{w}_7$};
\draw[blue, thick] (7)--(6) node[left, midway, black]{$\mathbf{w}_4$};
\draw[blue, thick] (7)--(5) node[right, midway, black]{$\mathbf{w}_5$};
\draw[blue, thick] (3)--(2) node[right, midway, black]{$\mathbf{w}_8$};
\draw[red, dashed, thick] (1)--(2) node[right, midway, black]{$\mathbf{v}_8$};
\draw[red, dashed, thick] (1)--(3) node[left, midway, black]{$\mathbf{v}_7$};
\draw[red, dashed, thick] (1)--(8) node[right, midway, black]{$\mathbf{v}_2$};
\draw[red, dashed, thick] (3)--(4) node[right, midway, black]{$\mathbf{v}_6$};
\draw[red, dashed, thick] (5)--(6) node[below, midway, black]{$\mathbf{v}_4$};
\draw[red, dashed, thick] (8)--(9) node[left, midway, black]{$\mathbf{v}_1$};
\draw[red, dashed, thick] (4)--(5) node[right, midway, black]{$\mathbf{v}_5$};
\draw[red, dashed, thick] (4)--(7) node[above, midway, black]{$\mathbf{v}_3$};
	 
		\end{tikzpicture}
\end{center}
	\caption{Vectors assigned to the left and right trees of the triangulation $T$ of~$\polygon{10}$ in Figure~\ref{fig_tree_and_triangulation}}\label{fig_vectors_left_right_trees}
\end{figure}
\end{example}

\section{Left and right trees and binary trees}\label{sec_lT_rT_and_binary_trees}
Let $T$ be a triangulation of $\polygon{n+2}$ and let $\mathbf{v}_k$ and $\mathbf{w}_k$ $(k\in [n]\coloneqq\{1,\dots,n\})$ be the vectors associated with~$T$ defined in the previous section. 
In this section, we observe how the sum $\mathbf{v}_k + \mathbf{w}_k$ $(k\in [n])$ is  related to the binary tree $\B_T$ (see Corollary~\ref{coro:binary_trees}).
\begin{lemma} \label{lemm:fano_condition}
	There is a unique integer $k_0\in [n]$ such that $\mathbf{v}_{k_0} + \mathbf{w}_{k_0} = \mathbf{0}$ and for $k\in [n]\backslash\{k_0\}$ we have 
	\begin{equation} \label{eq:vkwksum}
		\mathbf{v}_k + \mathbf{w}_k = \begin{cases} \mathbf{v}_{k_{\ll}} & \text{ if } \{ k_{\ll}, k_{\rr}\} \in E(\rT), \\
			\mathbf{w}_{k_{\rr}} & \text{ if } \{k_{\ll}, k_{\rr}\} \in E(\lT).
		\end{cases}
	\end{equation} 
In fact, $k_0$ is the remaining vertex of the triangle containing the distinguished 
edge $\{0, n+1\}$.
\end{lemma}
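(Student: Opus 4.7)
The plan is to compute $\mathbf{v}_k + \mathbf{w}_k$ directly from the definition in~\eqref{eq:defvkwk}, obtaining
\[
\mathbf{v}_k + \mathbf{w}_k = (\varpi_k - \varpi_{k_\rr}) + (\varpi_{k_\ll} - \varpi_k) = \varpi_{k_\ll} - \varpi_{k_\rr},
\]
and then to recognize this as precisely the vector that~\eqref{eq:defvkwk} assigns to the \emph{third} edge $\{k_\ll, k_\rr\}$ of the triangle of $T$ with vertices $k_\ll < k < k_\rr$; the existence and uniqueness of this triangle are guaranteed by Lemma~\ref{lemma_uniqueness_kl_k_kr}. Since every edge of $T$ is either the distinguished edge $\{0, n+1\}$ or is labelled $\ll$ or $\rr$ by the map~$F$, the verification then splits into three mutually exclusive cases.

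First I would handle the vanishing case. The equality $\mathbf{v}_k + \mathbf{w}_k = \mathbf{0}$ is equivalent to $\varpi_{k_\ll} = \varpi_{k_\rr}$; since $\{\varpi_1,\dots,\varpi_n\}$ is a basis of $N$ while $\varpi_0 = \varpi_{n+1} = \mathbf{0}$, the inequality $k_\ll < k_\rr$ forces $k_\ll = 0$ and $k_\rr = n+1$. Because $\{0,n+1\}$ is a boundary edge of $\polygon{n+2}$, it is contained in exactly one triangle of~$T$, which gives the uniqueness of $k_0$ and identifies it as the remaining (middle) vertex of that triangle.

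Next, assume $\{k_\ll,k_\rr\}\neq\{0,n+1\}$, so this edge lies in $E(\lT)\cup E(\rT)$. If $\{k_\ll,k_\rr\}\in E(\rT)$, the uniqueness part of Lemma~\ref{lemma_uniqueness_kl_k_kr} --- every edge of $\rT$ has a unique expression $\{j,j_\rr\}$ with $j<j_\rr$ --- forces $(k_\ll)_\rr = k_\rr$, and hence
\[
\mathbf{v}_{k_\ll} = \varpi_{k_\ll} - \varpi_{(k_\ll)_\rr} = \varpi_{k_\ll} - \varpi_{k_\rr} = \mathbf{v}_k + \mathbf{w}_k.
\]
The symmetric argument, applied to $\lT$ in place of $\rT$, handles the remaining case: $\{k_\ll,k_\rr\}\in E(\lT)$ implies $(k_\rr)_\ll = k_\ll$, whence $\mathbf{w}_{k_\rr} = \varpi_{k_\ll} - \varpi_{k_\rr} = \mathbf{v}_k + \mathbf{w}_k$. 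This establishes~\eqref{eq:vkwksum}.

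I do not foresee any serious obstacle: the identity reduces to an immediate unfolding of the definitions once $\mathbf{v}_k+\mathbf{w}_k$ is recognized as the vector associated to the third edge of the triangle indexed by $k$. The only real point of care is the bookkeeping that separates the three cases, which is provided by the construction of~$F$ together with the uniqueness in Lemma~\ref{lemma_uniqueness_kl_k_kr}.
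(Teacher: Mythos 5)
Your proof is correct and follows essentially the same route as the paper: compute $\mathbf{v}_k+\mathbf{w}_k=\varpi_{k_\ll}-\varpi_{k_\rr}$, identify this with the vector assigned to the third edge of the triangle with middle vertex $k$, and split according to whether that edge is the distinguished edge, in $E(\rT)$, or in $E(\lT)$, invoking Lemma~\ref{lemma_uniqueness_kl_k_kr} to see $(k_\ll)_\rr=k_\rr$ or $(k_\rr)_\ll=k_\ll$. The only small difference is that you make the uniqueness of $k_0$ explicit by arguing that $\varpi_{k_\ll}=\varpi_{k_\rr}$ together with $\varpi_0=\varpi_{n+1}=\mathbf{0}$ and linear independence of $\varpi_1,\dots,\varpi_n$ forces $k_\ll=0$, $k_\rr=n+1$, whereas the paper leaves this direction implicit; this is a welcome, if minor, bit of extra rigor.
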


\begin{proof}
The vertices $k_{\ll},k,k_{\rr}$ form a triangle in $T$ by Lemma~\ref{lemma_uniqueness_kl_k_kr} and we have 
\[
\mathbf{v}_k+\mathbf{w}_k=\varpi_{k_\ll}-\varpi_{k_\rr},
\]
which follows from~\eqref{eq:defvkwk}. When $\{k_\ll,k_\rr\}=\{0,n+1\}$, we have $\varpi_{k_\ll}-\varpi_{k_\rr}=\boldsymbol{0}$ because \linebreak $\varpi_0=\varpi_{n+1}=\boldsymbol{0}$ by definition.
Otherwise, $\{k_\ll,k_\rr\}$ is an edge of $\lT$ or $\rT$. 
By Lemma~\ref{lemma_uniqueness_kl_k_kr}, we obtain $(k_{\ll})_{\rr} = k_{\rr}$ if $\{k_{\ll}, k_{\rr}\} \in E(\rT)$; otherwise, we obtain $(k_{\rr})_{\ll}=k_{\ll}$.
Hence, again by~\eqref{eq:defvkwk}, we get
\begin{equation*}
		\varpi_{k_\ll}-\varpi_{k_\rr} = \begin{cases} \mathbf{v}_{k_{\ll}} & \text{ if } \{ k_{\ll}, k_{\rr}\} \in E(\rT), \\
			\mathbf{w}_{k_{\rr}} & \text{ if } \{k_{\ll}, k_{\rr}\} \in E(\lT).
		\end{cases}
	\end{equation*} 
	This proves the lemma.
\end{proof}

\begin{example}\label{example_primitive_relation}
	The direct computation of the sum $\mathbf{v}_k + \mathbf{w}_k$ of the vectors $\mathbf{v}_k$, $\mathbf{w}_k$ provided in Example~\ref{example_vk_wk} shows the following: 
	\begin{align*}
		\mathbf{v}_1 + \mathbf{w}_1 &= (\varpi_1 - \varpi_2) + (-\varpi_1) = \mathbf{w}_2, &
		\mathbf{v}_2 + \mathbf{w}_2 &= \varpi_2 + (-\varpi_2) = \mathbf{0}, \\
		\mathbf{v}_3 + \mathbf{w}_3 &= (\varpi_3 - \varpi_6) + (\varpi_2 - \varpi_3) = \mathbf{w}_6, &
		\mathbf{v}_4 + \mathbf{w}_4 &= (\varpi_4 - \varpi_5) + (\varpi_3 - \varpi_4) = \mathbf{w}_5, \\
		\mathbf{v}_5 + \mathbf{w}_5 &= (\varpi_5 - \varpi_6) + (\varpi_3 - \varpi_5) = \mathbf{v}_3, &
		\mathbf{v}_6 + \mathbf{w}_6 &= (\varpi_6 -\varpi_7) + (\varpi_2 - \varpi_6) = \mathbf{w}_7, \\
		\mathbf{v}_7+\mathbf{w}_7 &= \varpi_7 + (\varpi_2 - \varpi_7) = \mathbf{v}_2, &
		\mathbf{v}_8 + \mathbf{w}_8 &= \varpi_8 + (\varpi_7 - \varpi_8) = \mathbf{v}_7.
	\end{align*}
	One can check that Lemma~\ref{lemm:fano_condition} holds from this computation. For example, when $k = 1$, for the triangle with vertices $0 < 1 < 2$, the edge $\{0,2\}$ is in the left tree, so the lemma says that $\mathbf{v}_1 + \mathbf{w}_1= \mathbf{w}_2$, which agrees with the computation above. Each relation above corresponds to a triangle in Figure~\ref{fig_vectors_left_right_trees}.
\end{example}

Motivated by Lemma~\ref{lemm:fano_condition}, we define a function $\varphi =\varphi_T \colon [n]\setminus \{k_0\}\to [n]$ by 
\begin{equation}\label{eq_def_varphi}
\varphi(k)=\begin{cases}
{k_{\ll}} & \text{ if } \{ k_{\ll}, k_{\rr}\} \in E(\rT), \\
{k_{\rr}} & \text{ if } \{k_{\ll}, k_{\rr}\} \in E(\lT), 
\end{cases}
\end{equation}
and a \defi{sign map} $\sigma=\sigma_T \colon [n]\setminus \{k_0\} \to \{ +, -\}$ by 
\[
\sigma(k) = \begin{cases}
+ & \text{ if } \{ k_{\ll}, k_{\rr}\} \in E(\rT), \\
- & \text{ if } \{k_{\ll}, k_{\rr}\} \in E(\lT). 
\end{cases}
\]

\begin{proposition} \label{lemm:binary}
	We have $|\varphi^{-1}(\ell)|\le 2$ for any $\ell\in [n]$, and if $\varphi(k_1)=\varphi(k_2)$ for $k_1\not=k_2$, then $\sigma(k_1)\not=\sigma(k_2)$. 
\end{proposition}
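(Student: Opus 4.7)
The plan is to fix $\ell \in [n]$ and examine, for each sign $\varepsilon \in \{+,-\}$, how many $k \in \varphi^{-1}(\ell)$ satisfy $\sigma(k) = \varepsilon$. I will argue separately that the fiber over $\ell$ contains at most one element of each sign, which gives both claims at once: $|\varphi^{-1}(\ell)| \le 2$, and if two elements map to $\ell$ they have opposite signs.

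Suppose $k \in \varphi^{-1}(\ell)$ with $\sigma(k)=+$. By definition, $\{k_\ll, k_\rr\} \in E(\rT)$ and $\varphi(k) = k_\ll = \ell$. Since $\{k_\ll, k_\rr\}$ is an edge of $\rT$ with $k_\ll < k_\rr$, the defining property of $\rT$ from Lemma~\ref{lemma_uniqueness_kl_k_kr} forces $k_\rr = (k_\ll)_\rr = \ell_\rr$. Hence the triangle of $T$ determined by the middle-vertex rule at $k$ is the specific triangle with vertices $\ell < k < \ell_\rr$. Since in any triangulation of $\polygon{n+2}$ the triangle on the "inside" of a chord $\{\ell, \ell_\rr\}$ (the side whose vertices lie strictly between $\ell$ and $\ell_\rr$) is uniquely determined when it exists, the third vertex $k$ is uniquely determined by $\ell$. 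Thus $\varphi^{-1}(\ell) \cap \sigma^{-1}(+)$ has at most one element.

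The case $\sigma(k) = -$ is symmetric: now $\{k_\ll, k_\rr\} \in E(\lT)$ and $k_\rr = \ell$, so by Lemma~\ref{lemma_uniqueness_kl_k_kr} applied to $\lT$ we must have $k_\ll = \ell_\ll$, and the triangle is $\{\ell_\ll, k, \ell\}$. Again $k$ is uniquely determined by $\ell$, so $\varphi^{-1}(\ell) \cap \sigma^{-1}(-)$ has at most one element.

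Putting the two cases together yields $|\varphi^{-1}(\ell)| \le 2$, and if $\varphi(k_1) = \varphi(k_2) = \ell$ with $k_1 \ne k_2$, the two must come from different cases, i.e.\ $\sigma(k_1) \ne \sigma(k_2)$. No step is really an obstacle here; the only subtlety is verifying that the equalities $k_\rr = \ell_\rr$ and $k_\ll = \ell_\ll$ really follow from the uniqueness asserted in Lemma~\ref{lemma_uniqueness_kl_k_kr} (each vertex has a unique neighbor to the left in $\lT$ and a unique neighbor to the right in $\rT$), which is the key geometric input.
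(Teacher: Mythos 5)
Your proof is correct. Let me compare it to the paper's.

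The paper proceeds by a four-way case analysis on $k_1 < k_2$ with $\varphi(k_1)=\varphi(k_2)=\ell$, according to whether $\varphi(k_i)=(k_i)_{\ll}$ or $(k_i)_{\rr}$ for each $i$, and rules out three of the four cases by contradiction (cases (1) and (2) via the uniqueness in Lemma~\ref{lemma_uniqueness_kl_k_kr}, case (3) by an order argument). Your argument avoids the case split by instead fixing the sign $\varepsilon$ and observing directly that the constraint $\{\varphi(k)=\ell,\ \sigma(k)=\varepsilon\}$ pins down the triangle at $k$ to be the unique triangle of $T$ on the far side of a specific edge of the $\ell$-triangle ($\{\ell,\ell_{\rr}\}$ when $\varepsilon=+$, $\{\ell_{\ll},\ell\}$ when $\varepsilon=-$); since any interior chord of a polygon triangulation belongs to exactly two triangles, this determines $k$. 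Both proofs ultimately rest on the same key inputs — the uniqueness in Lemma~\ref{lemma_uniqueness_kl_k_kr} and the geometry of chords in a triangulation — but yours is more direct and gets the two conclusions ($|\varphi^{-1}(\ell)|\le 2$ and opposite signs) simultaneously as an immediate consequence of a single uniqueness statement per sign, whereas the paper establishes the adjacency picture in Figure~\ref{fig_triangle_kpq3} by elimination. One small thing worth noting: your argument, like the paper's, implicitly identifies the triangle at $k$ as the one adjacent to the $\ell$-triangle across the appropriate edge; that adjacency picture is what Corollary~\ref{coro:binary_trees} later leans on, and it is present in your argument too, so nothing is lost.
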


\begin{proof}
	The vertices $k_\ll,k,k_\rr$ form a triangle in $T$ for $k=1,\dots,n$ by   Lemma~\ref{lemma_uniqueness_kl_k_kr} and we often denote the triangle by $k_\ll<k<k_\rr$ in the following. 
	Let $k_1 < k_2$ and $\varphi(k_1) = \varphi(k_2)=\ell$. 
	We claim that the triangles $(k_1)_{\ll} < k_1 < (k_1)_{\rr}$ and $(k_2)_{\ll} < k_2 < (k_2)_{\rr}$ must be adjacent to the triangle $\ell_{\ll}<\ell<\ell_{\rr}$ as depicted in Figure~\ref{fig_triangle_kpq3}, 
which implies the former assertion $|\varphi^{-1}(\ell)|\le 2$ in the proposition.

We consider the following four cases according as $\varphi(k_i)=(k_i)_{\ll}$ or $(k_i)_{\rr}$ for $i=1,2$ by~\eqref{eq_def_varphi}:
	\begin{enumerate}
		\item $(k_1)_{\ll} =\varphi(k_1)=\ell=\varphi(k_2)= (k_2)_{\ll}$;
		\item $(k_1)_{\rr} =\varphi(k_1)=\ell=\varphi(k_2)= (k_2)_{\rr}$; 
		\item $(k_1)_{\ll} =\varphi(k_1)=\ell=\varphi(k_2)= (k_2)_{\rr}$; or
		\item $(k_1)_{\rr} =\varphi(k_1)=\ell=\varphi(k_2)= (k_2)_{\ll}$.
	\end{enumerate}
	Since $k_{\ll} < k < k_{\rr}$  for any $k$  and $k_1 < k_2$,  we obtain $(k_1)_{\ll} < k_1 < k_2 < (k_2)_{\rr}$, so case (3) does not occur. 
	For case (1),  the triangles $(k_1)_{\ll} < k_1 < (k_1)_{\rr}$ and $(k_2)_{\ll} < k_2 < (k_2)_{\rr}$ share the vertex $\ell$ and
	\[
	\{\ell, (k_1)_{\rr}\}, \{ \ell, (k_2)_{\rr}\} \in E(\rT)
	\]
	by \eqref{eq_def_varphi} as depicted in Figure~\ref{fig_triangle_kpq1}. However, this contradicts Lemma~\ref{lemma_uniqueness_kl_k_kr} since $\ell < (k_i)_{\rr}$ for $i = 1,2$ and $(k_1)_{\rr}\not=(k_2)_{\rr}$, so case (1) does not occur. Similarly case (2) also does not occur, see Figure~\ref{fig_triangle_kpq2}.  Finally, for case (4), since $\{(k_1)_{\ll},(k_1)_{\rr}\}$ is an edge in $\lT$ and $(k_1)_{\ll}<(k_1)_{\rr}=\ell$, we have $(k_1)_{\ll}=\ell_{\ll}$ by Lemma~\ref{lemma_uniqueness_kl_k_kr}.  Similarly we have $(k_2)_{\rr}=\ell_{\rr}$, see Figure~\ref{fig_triangle_kpq3}.  Therefore, the triangles $(k_1)_{\ll} < k_1 < (k_1)_{\rr}$ and $(k_2)_{\ll} < k_2 < (k_2)_{\rr}$ are adjacent to the triangle $\ell_{\ll}<\ell<\ell_{\rr}$ as depicted in Figure~\ref{fig_triangle_kpq3}. This proves the claim. 

Moreover, in case (4), 
	\[
	\varphi(k_1) =\ell= (k_1)_{\rr},\ \text{so}\ \sigma(k_1) = -, \qquad \text{ and }\qquad 
	\varphi(k_2) =\ell= (k_2)_{\ll},\ \text{so}\ \sigma(k_2) = +. 
	\] 
This shows the latter assertion in the proposition.
\end{proof}
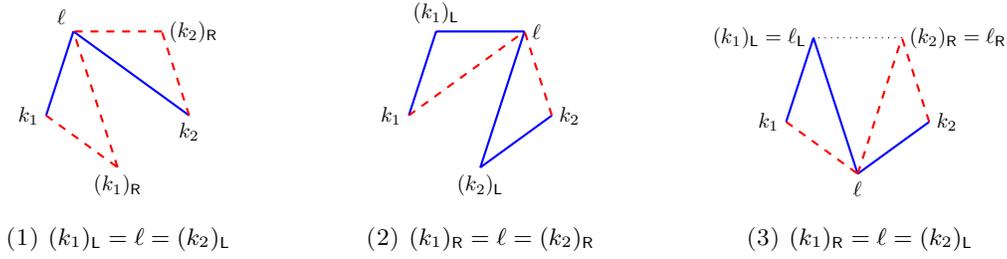
\begin{figure}[H]
	\begin{subfigure}[b]{0.3\textwidth}
		\begin{center}
	\begin{tikzpicture}[x=10mm, y=5mm, label distance = 1mm, every node/.style={scale=0.8}]
	\foreach \x in {1,2,3,4,5}{
	\coordinate (a\x) at (54+\x*72:1cm) ; 
	} 
	\draw[blue, thick] (a1)--(a2)
	(a1)--(a4);
	
	\draw[red, dashed, thick] 
	(a2)--(a3)
	(a4)--(a5)
	(a3)--(a1)
	(a5)--(a1);

\node[above left] at (a1) {$\ell$};
\node[left] at (a2) {$k_1$};
\node[below] at (a4) {$k_2$};
\node[right] at (a5) {$(k_2)_{\rr}$};
\node[below] at (a3) {$(k_1)_{\rr}$};

	\end{tikzpicture}
\end{center}
	
\caption{$(k_1)_{\ll} = \ell =  (k_2)_{\ll}$}\label{fig_triangle_kpq1}
	\end{subfigure}
	\begin{subfigure}[b]{0.33\textwidth}
	\begin{center}
		\begin{tikzpicture}[x=10mm, y=5mm, label distance = 1mm, every node/.style={scale=0.8}]
	\foreach \x in {1,2,3,4,5}{
	\coordinate (a\x) at (54+\x*72:1cm) ; 
} 
			\draw[blue, thick] (a1)--(a2)
			(a3)--(a4)
			(a5)--(a1)
			(a3)--(a5);
			
			\draw[red, dashed, thick] (a2)--(a5)
			(a4)--(a5);

			\node[above] at (a1) {$(k_1)_{\ll}$};
			\node[left] at (a2) {$k_1$};
			\node[below] at (a3) {$(k_2)_{\ll}$};
			\node[right] at (a4) {$k_2$};
			\node[right] at (a5) {$\ell$};
			
		\end{tikzpicture}
	\end{center}
	
	\caption{$(k_1)_{\rr} = \ell= (k_2)_{\rr}$}\label{fig_triangle_kpq2}
\end{subfigure}
	 \begin{subfigure}[b]{0.33\textwidth}
	\begin{center}
		\begin{tikzpicture}[x=10mm, y=5mm, label distance = 1mm, every node/.style={scale=0.8}]
			\foreach \x in {2,...,5,6}{
				\coordinate (a\x) at (-18-\x*72:1cm) ; 
			} 
			
			\draw[blue, thick] (a3)--(a2)
			(a3)--(a6)
			(a6)--(a5);
			
			\draw[red, dashed, thick] (a2)--(a6)
			(a6)--(a4)
			(a5)--(a4); 
			
			\node[right] at (a4) {{$(k_2)_{\rr}=\ell_{\rr}$}};
			\node[right] at (a5) {$k_2$};
			\node[below] at (a6) {{$\ell$}};
			\node[left] at (a2) {$k_1$};
			\node[left] at (a3) {{$(k_1)_{\ll}=\ell_{\ll}$}};
			
			\draw[dotted] (a4)--(a3);
		\end{tikzpicture}
	\end{center}
	\caption{$(k_1)_{\rr} = {\ell} = (k_2)_{\ll}$}\label{fig_triangle_kpq3}
\end{subfigure}	

\caption{Triangles in the proof of Proposition~\ref{lemm:binary}}
\end{figure}\label{fig_triangle_proof2}

\begin{definition}\label{def_binary_tree_2}
	We define the (vertex-labeled) binary tree $\B_{(\varphi,\sigma)}$ associated with $\varphi$ and $\sigma$ as follows: 
	\begin{itemize}
		\item $V(\B_{(\varphi,\sigma)}) = [n]$ and the root vertex is the element $k_0$ defined in Lemma~\ref{lemm:fano_condition}.
		\item For each $k \in [n] \setminus \{k_0\}$, its parent is $\varphi(k)$. Moreover, if $\sigma(k) = -$, then the vertex $k$ is the left child of $\varphi(k)$ and if $\sigma(k)=+$, then the vertex $k$ is the right child of $\varphi(k)$.
	\end{itemize}
\end{definition}

For each $1 \le k \le n$, there is a unique triangle in $T$ having $k$ as the middle vertex by Lemma~\ref{lemma_uniqueness_kl_k_kr}. This provides the vertex-labeling on the binary tree $\B_{T}$ defined in Section~\ref{sec_Catalan_numbers}. 
The proof of Proposition~\ref{lemm:binary} implies the following. 

\begin{corollary}\label{coro:binary_trees}
Let $T$ be a triangulation of $\polygon{n+2}$ and  $\B_T$  the binary tree 
defined in Section~\ref{sec_Catalan_numbers}.
Considering the vertex-labeling on~$\B_T$ given by 
Lemma~\ref{lemma_uniqueness_kl_k_kr}, the binary tree $\B_T$ is the same as 
$\B_{(\varphi,\sigma)}$ defined above as a vertex-labeled tree. 
\end{corollary}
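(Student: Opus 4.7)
The plan is to establish the identification $\B_T = \B_{(\varphi,\sigma)}$ by checking three things in turn: the vertex sets, the roots, and the parent-and-side data for each non-root vertex. The vertex sets coincide: for $\B_{(\varphi,\sigma)}$ this is $[n]$ by definition, and for $\B_T$ it follows from Lemma~\ref{lemma_uniqueness_kl_k_kr}, which says that each $k\in[n]$ is the middle vertex of a unique triangle of $T$, giving a bijective labeling of $V(\B_T)$ by $[n]$. The roots also coincide: by Lemma~\ref{lemm:fano_condition}, $k_0$ is the middle vertex of the triangle containing the distinguished edge $e = \{0, n+1\}$, which is the root of $\B_T$ by construction.

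The crucial step is to pin down the geometric relationship between the recursive $F$-labeling and the vertices of each triangle. Fix $k \in [n] \setminus \{k_0\}$ and let $\Delta_k$ denote the triangle with middle vertex $k$, whose vertices $k_{\ll} < k < k_{\rr}$ appear in counterclockwise order along $\partial \polygon{n+2}$. Consequently its edges traversed counterclockwise around $\Delta_k$ are $\{k_{\ll},k\}, \{k,k_{\rr}\}, \{k_{\ll},k_{\rr}\}$. By Lemma~\ref{lemma_uniqueness_kl_k_kr}, $\{k_{\ll},k\} \in E(\lT)$ and $\{k,k_{\rr}\} \in E(\rT)$, so $F(\{k_{\ll},k\}) = \ll$ and $F(\{k,k_{\rr}\}) = \rr$. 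The definition of $F$ stipulates that when $\Delta_k$ is processed with some entering edge $e'$, the other two edges are named $f_{\ll}, f_{\rr}$ so that $f_{\ll}, f_{\rr}, e'$ appear counterclockwise. A case check rules out the two possibilities $e' = \{k_{\ll},k\}$ (which would force $F(\{k,k_{\rr}\}) = \ll$) and $e' = \{k,k_{\rr}\}$ (which would force $F(\{k_{\ll},k\}) = \rr$); hence the entering edge of $\Delta_k$ is necessarily the diagonal $\{k_{\ll},k_{\rr}\}$.

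It follows that the parent of $k$ in $\B_T$ corresponds to the unique other triangle $\Delta'$ of $T$ sharing the edge $\{k_{\ll}, k_{\rr}\}$ with $\Delta_k$, and $\Delta_k$ is the left (resp. right) child of $\Delta'$ exactly when this shared edge was assigned $f_{\ll}$ (resp. $f_{\rr}$) during $\Delta'$'s processing, i.e., when $\{k_{\ll},k_{\rr}\} \in E(\lT)$ (resp. $E(\rT)$). If $\{k_{\ll},k_{\rr}\} \in E(\rT)$, then Lemma~\ref{lemma_uniqueness_kl_k_kr} applied at the vertex $k_{\ll}$ forces $(k_{\ll})_{\rr} = k_{\rr}$, so $\Delta'$ has middle vertex $k_{\ll} = \varphi(k)$, and $\Delta_k$ is the right child, matching $\sigma(k) = +$. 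Symmetrically, if $\{k_{\ll},k_{\rr}\} \in E(\lT)$, then $(k_{\rr})_{\ll} = k_{\ll}$, $\Delta'$ has middle vertex $k_{\rr} = \varphi(k)$, and $\Delta_k$ is the left child, matching $\sigma(k) = -$. In both cases the parent and left/right assignment in $\B_T$ agree with those prescribed by Definition~\ref{def_binary_tree_2}, yielding $\B_T = \B_{(\varphi,\sigma)}$ as vertex-labeled binary trees.

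The main obstacle is the geometric observation that the entering edge of each non-root triangle $\Delta_k$ is its diagonal $\{k_{\ll}, k_{\rr}\}$; once this is extracted from the counterclockwise convention in the construction of $F$ together with Lemma~\ref{lemma_uniqueness_kl_k_kr}, the matching of parents and of the left/right sides is just an unwinding of the definitions of $\varphi$ and $\sigma$.
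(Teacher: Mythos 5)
Your proof is correct and is essentially a fleshed-out version of the paper's argument, which simply says that Corollary~\ref{coro:binary_trees} follows from the proof of Proposition~\ref{lemm:binary}. The key geometric observation in both is that the edge through which the triangle $\Delta_k$ with middle vertex $k$ was reached in the recursive construction of $F$ is the diagonal $\{k_\ll,k_\rr\}$ (this is exactly the adjacency picture of Figure~\ref{fig_triangle_kpq3}); you make this explicit by ruling out the other two candidate entering edges using Lemma~\ref{lemma_uniqueness_kl_k_kr}, and then match parent and left/right data with $\varphi$ and $\sigma$ by unwinding the definitions, which is what the paper implicitly does.
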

Figure~\ref{fig:vertex-labeled tree} provides an example of the binary tree $\B_{(\varphi, \sigma)}$. Comparing this binary tree $B_{(\varphi, \sigma)}$ with the one in Figure~\ref{fig_tree_and_triangulation}, we obtain an example of Corollary~\ref{coro:binary_trees}.
	\begin{figure}[hbt]
			\centering
			\begin{tikzpicture}[vertex/.style = {circle, draw, inner sep = 1pt },
			level distance=5mm,	
			level 1/.style={sibling distance=13mm},	
			level 2/.style={sibling distance=7mm},
			level 3/.style={sibling distance=5mm},
			scale=2,
			]
			\node[vertex]{$2$}
			child{
				node[vertex]{$1$}
				edge from parent node[above left] {$-$}
			}
			child{
				node[vertex]{$7$} 
				child{
					node[vertex]{$6$}
					child{
						node[vertex]{$3$}
						child[missing] {}
						child{
							node[vertex]{$5$}
							child{
								node[vertex]{$4$} 
								edge from parent node[above left] {$-$}
							}
						child[missing] {}
						edge from parent node[above right] {$+$}
						}
					edge from parent node[above left] {$-$}
					}
					child[missing] {}
				edge from parent node[above left] {$-$}	
				}
				child{
					node[vertex]{$8$} edge from parent node[above right] {$+$}
				}
			edge from parent node[above right] {$+$}
			};
			
			\end{tikzpicture}
	\caption{The vertex-labeled binary tree $\B_{(\varphi, \sigma)}$ associated  
	with the triangulation~$T$ of~$\polygon{10}$ in 
	Figure~\ref{fig_tree_and_triangulation}.}\label{fig:vertex-labeled tree}
\end{figure}
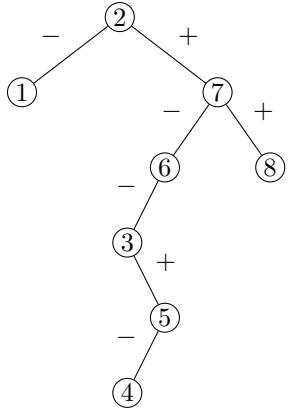
\section{Fans and toric varieties of Catalan type} \label{sec:fan_Catalan_type}
Let $T$ be a triangulation of $\polygon{n+2}$ and let $\mathbf{v}_k$ and $\mathbf{w}_k$ $(k\in [n])$ be the vectors associated with~$T$ defined in Section~\ref{sec_left_and_right_trees}. In this section, we provide fans and toric varieties \emph{of Catalan type} using the vectors $\mathbf{v}_k, \mathbf{w}_k$ and study when such fans are isomorphic.

Before describing the fan corresponding to a triangulation $T$, we prepare one terminology. 
Let $\Sigma$ be a fan. A subset $P$ of the primitive ray vectors in $\Sigma$ is called a \defi{primitive collection} of $\Sigma$ if 
\[
\Cone(P) \notin \Sigma
\quad \text{ but }\quad \Cone(P \setminus \{\mathbf{u}\}) \in \Sigma \qquad \text{ for every }\mathbf{u} \in P.
\] 
We denote by $\PC(\Sigma)$ the set of primitive collections of $\Sigma$.
\begin{example}\label{example_primitive_collection}
Suppose that $\Sigma$ consists of four ray vectors $\mathbf{u}_1=(1,0)$, $\mathbf{u}_2 = (0,1)$, $\mathbf{u}_3 = (-1,0)$, $\mathbf{u}_4 = (-1,-1)$; and four maximal cones $\Cone(\mathbf{u}_1,\mathbf{u}_2), \Cone(\mathbf{u}_1, \mathbf{u}_4), \Cone(\mathbf{u}_2,\mathbf{u}_3), \Cone(\mathbf{u}_3, \mathbf{u}_4)$ as shown in Figure~\ref{figure_fan_example}. 
Then the set of primitive collections of $\Sigma$ is $\{\{\mathbf{u}_1,\mathbf{u}_3\}, \{\mathbf{u}_2, \mathbf{u}_4\}\}$.
\end{example}
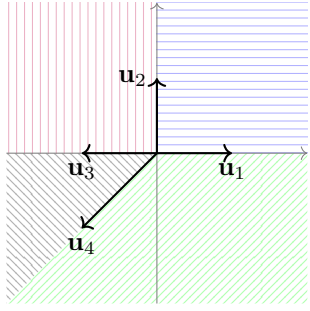
\begin{figure}
\begin{tikzpicture}
\fill[pattern color = blue!30!white, pattern = horizontal lines] (0,0)--(2,0)--(2,2)--(0,2)--cycle;
\fill[pattern color = purple!30!white, pattern = vertical lines] (0,0)--(0,2)--(-2,2)--(-2,0)--cycle;
\fill[pattern color= black!30, pattern = north west lines] (0,0)--(-2,0)--(-2,-2)--cycle;
\fill[pattern color=green!30, pattern = north east lines] (0,0)--(2,0)--(2,-2)--(-2,-2)--cycle;

\draw[->, gray] (-2,0)--(2,0);
\draw[->, gray] (0,-2)--(0,2);

\draw[thick, ->] (0,0)--(1,0) node[below] {$\mathbf{u}_1$};
\draw[thick, ->] (0,0)--(0,1) node[left] {$\mathbf{u}_2$};
\draw[thick, ->] (0,0)--(-1,0) node[below] {$\mathbf{u}_3$};
\draw[thick, ->] (0,0)--(-1,-1) node[below] {$\mathbf{u}_4$};

\end{tikzpicture}
\caption{A fan in Example~\ref{example_primitive_collection}. The set of primitive collections is $\{\{\mathbf{u}_1,\mathbf{u}_3\}, \{\mathbf{u}_2, \mathbf{u}_4\}\}$.}\label{figure_fan_example}
\end{figure}

\begin{lemma} \label{lemm:fan}
Let $T$ be a triangulation of $\polygon{n+2}$ and let $\mathbf{v}_k$ and $\mathbf{w}_k$ $(k\in [n])$ be the vectors associated with~$T$. 
Define a set $\Sigma_T$ of cones as follows: A subset $Q \subset \{ \mathbf{v}_k,\mathbf{w}_k \mid k \in [n]\}$ forms a cone $\Cone(Q) \in \Sigma_T$ if and only if $ \{\mathbf{v}_i, \mathbf{w}_i\} \not\subset Q$ for all $i \in [n]$.
Then $\Sigma_T$ is a complete non-singular fan in $N\otimes \R$. 
Moreover, the set $\PC(\Sigma_T)$ of primitive collections of the fan $\Sigma_T$ is $\PC(\Sigma_T) = \{\{\mathbf{v}_k, \mathbf{w}_k \} \mid k \in [n]\}$.
\end{lemma}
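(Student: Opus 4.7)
The plan is to prove three things: every maximal cone of $\Sigma_T$ is nonsingular, the cones fit together into a complete fan, and the primitive collections are exactly the pairs $\{\mathbf{v}_k,\mathbf{w}_k\}$.

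For the nonsingularity, I would show that for any $S\subset[n]$ the set $Q_S=\{\mathbf{w}_k:k\in S\}\cup\{\mathbf{v}_k:k\notin S\}$ is a $\Z$-basis of $N$. Each element of $Q_S$ has the form $\varpi_a-\varpi_b$ with $\{a,b\}$ an edge of $T$, and collecting these $n$ edges yields a subgraph $G_S$ of $T$ on the vertex set $\{0,1,\dots,n+1\}$. Letting $\bar G_S$ denote $G_S$ with vertices $0$ and $n+1$ identified, the matrix of $Q_S$ in the basis $\{\varpi_1,\dots,\varpi_n\}$ is exactly the reduced signed incidence matrix of $\bar G_S$ (with the row for the merged vertex removed). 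By the classical reduced-incidence fact, its determinant is $\pm 1$ if and only if $\bar G_S$ is a spanning tree on $n+1$ vertices, i.e.\ is connected. I would establish connectedness by induction on $n$, splitting $T$ at the triangle $\{0,k_0,n+1\}$ containing the distinguished edge into sub-triangulations $T_L$ on $\{0,1,\dots,k_0\}$ and $T_R$ on $\{k_0,k_0+1,\dots,n+1\}$: the edge contributed to $G_S$ by the middle vertex $k_0$ is one of $\{0,k_0\}$ or $\{k_0,n+1\}$, and hence connects $k_0$ directly to $\{0,n+1\}$; the inductive hypothesis applied to $T_L$ and $T_R$ then connects every remaining vertex either to $\{0,k_0\}$ or to $\{k_0,n+1\}$.

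For the fan axioms, closure under faces and intersections is immediate, since the condition that $Q$ contain no pair $\{\mathbf{v}_i,\mathbf{w}_i\}$ is preserved by both operations. Completeness is what I expect to be the main obstacle, and I would handle it by induction with the same root-triangle decomposition. The projection $\pi\colon N\to N/\Z\varpi_{k_0}$ kills $\mathbf{v}_{k_0}$ and $\mathbf{w}_{k_0}$; using that $\varpi^L_{k_0}=\varpi^R_{k_0}=\mathbf{0}$ in the sub-polygon lattices, $\pi$ sends the remaining rays precisely onto those of the product fan $\Sigma_{T_L}\times\Sigma_{T_R}$ on $N^L\oplus N^R$, where $N^L=\Z\langle\varpi_1,\dots,\varpi_{k_0-1}\rangle$ and $N^R=\Z\langle\varpi_{k_0+1},\dots,\varpi_n\rangle$. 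This product fan is complete by induction, and reintroducing the direction $\Z\varpi_{k_0}$ with the opposite rays $\pm\varpi_{k_0}$ exhibits $\Sigma_T$ as the fan of a $\C P^1$-bundle over the toric variety of $\Sigma_{T_L}\times\Sigma_{T_R}$, hence complete. Equivalently, Lemma~\ref{lemm:fano_condition} and Corollary~\ref{coro:binary_trees} show that ordering $[n]$ by a topological sort of $\B_T$ from the root $k_0$ makes the relations $\mathbf{v}_k+\mathbf{w}_k\in\{\mathbf{0},\mathbf{v}_{\varphi(k)},\mathbf{w}_{\varphi(k)}\}$ triangular, identifying $\Sigma_T$ directly as a Bott tower fan.

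Finally, the primitive collection claim follows from the definition: each $\{\mathbf{v}_k,\mathbf{w}_k\}$ fails to span a cone of $\Sigma_T$ while both of its singletons do, so $\{\mathbf{v}_k,\mathbf{w}_k\}\in\PC(\Sigma_T)$; conversely, if $P\in\PC(\Sigma_T)$ then $\Cone(P)\notin\Sigma_T$ forces $\{\mathbf{v}_k,\mathbf{w}_k\}\subset P$ for some $k$, and minimality forces $P=\{\mathbf{v}_k,\mathbf{w}_k\}$, since removing any other ray leaves the offending pair in $P$ and hence keeps $\Cone(P)$ outside $\Sigma_T$.
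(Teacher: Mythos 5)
Your proposal is correct but takes a more self-contained route than the paper. The paper's proof picks a linear extension $u$ of the tree order on $\B_T$ (so that $u(k_0)=1$ and $u(\varphi(k))<u(k)$ for $k\ne k_0$), notes via Lemma~\ref{lemm:fano_condition} that the transition matrix from the reordered $\mathbf{v}$'s to the reordered $\mathbf{w}$'s is upper triangular with diagonal entries $-1$, and then simply cites~\cite[Proposition~3.2]{MasudaPanov08}, which hands back completeness and nonsingularity in one stroke; your ``Equivalently, \dots'' sentence at the end of the second paragraph is precisely this argument in compressed form. Your main line of attack instead verifies unimodularity of every mixed basis $Q_S$ directly, through the nice observation that the chosen $n$ edges of $T$ become a spanning tree once $0$ and $n+1$ are identified, so the reduced signed incidence matrix has determinant $\pm1$; and it replaces the appeal to Masuda--Panov by an explicit $\C P^1$-bundle induction over $T_L$ and $T_R$ at the root triangle, which is essentially what that cited proposition packages up. You also spell out the primitive-collection claim, which the paper treats as immediate from the definition of $\Sigma_T$. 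The paper's route is shorter and leans on existing machinery; yours is longer but makes explicit what the citation conceals, and the spanning-tree criterion for smoothness of the mixed cones is an appealing combinatorial reformulation that the paper never states.
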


\begin{proof}
	Take a permutation $u\in \mathfrak{S}_n$ such that $u(k_0)=1$ and $u (\varphi(k))<u (k)$ for any $k\in [n]\setminus\{k_0\}$. 
	We notice that the existence of $u$ follows from the fact that $\B_{(\varphi, \sigma)}$ is a tree. 
	Then, it follows from Lemma~\ref{lemm:fano_condition} that the transition matrix $A$ defined by 
	\[
	(\mathbf{w}_{u^{-1}(1)},\dots,\mathbf{w}_{u^{-1}(n)})=(\mathbf{v}_{u^{-1}(1)},\dots,\mathbf{v}_{u^{-1}(n)})A
	 \]
	 is an upper triangular $n\times n$ integer matrix with $-1$ as diagonals. 
This implies that there is an automorphism of the lattice which sends the vectors $ \mathbf{v}_{u^{-1}(1)},\dots,\mathbf{v}_{u^{-1}(n)}$ to the standard basis vectors; and the vectors $\mathbf{w}_{u^{-1}(1)},\dots,\mathbf{w}_{u^{-1}(n)}$ to the column vectors of the matrix $A$. 
On the other hand, it is known from~\cite[Proposition~3.2]{MasudaPanov08} that the set $\Sigma_T$ of cones made by such vectors always form a complete non-singular fan. This proves the lemma.
\end{proof}

\begin{example}
	Suppose that $T$ is the triangulation in Figure~\ref{fig_tree_and_triangulation}. As we saw in Example~\ref{example_primitive_relation},  $k_0 = 2$ and
	\[
	\varphi(1) = 2, \quad \varphi(3) = 6,\quad \varphi(4)=5,\quad \varphi(5) = 3,\quad \varphi(6)=7,\quad \varphi(7)=2, \quad\varphi(8)=7,
	\]
	where $\varphi \colon[8] \setminus \{2\} \to [8]$ is defined in~\eqref{eq_def_varphi}. If we take $u = 21687534$, then it satisfies \linebreak $u(k_0) = u(2) = 1$ and $u (\varphi(k)) < u (k)$ for any $k \in [8] \setminus \{2\}$, and the transition matrix $A$ defined by $(\mathbf{w}_{u^{-1}(1)},\dots,\mathbf{w}_{u^{-1}(8)})=(\mathbf{v}_{u^{-1}(1)},\dots,\mathbf{v}_{u^{-1}(8)})A$ is
	\[
	A = \begin{pmatrix}
	-1& -1&  1&  0&  1&  1&  0&  0\\
	 0& -1&  0&  0&  0&  0&  0&  0\\
	 0&  0& -1&  1& -1& -1&  0&  0\\
	 0&  0&  0& -1&  0&  0&  0&  0\\
	 0&  0&  0&  0& -1& -1&  0&  0\\
	 0&  0&  0&  0&  0& -1&  1&  1\\
	 0&  0&  0&  0&  0&  0& -1& -1\\
	 0&  0&  0&  0&  0&  0&  0& -1
	\end{pmatrix},
	\]
	which is an upper triangular matrix with $-1$ as diagonals as we expected.
\end{example}

\begin{definition}
	For a polygon triangulation $T$, we say that the fan $\Sigma_T$ and the corresponding (smooth compact) toric variety $X(\Sigma_T)$ are of \emph{Catalan type}. 
\end{definition}

Lemma~\ref{lemm:fan} says that the underlying simplicial complex of the fan $\Sigma_T$ is the boundary complex of an $n$-dimensional cross-polytope. It is known from~\cite[Corollary~3.5]{MasudaPanov08} that such a fan is indeed the normal fan of an $n$-cube, so $X(\Sigma_T)$ is projective. 

Let $\Sigma$ be a complete non-singular fan and let $\PC(\Sigma)$ be the primitive collections. 
For a primitive collection $P = \{\mathbf{u}'_1, \dots,\mathbf{u}'_{\ell}\}$, there exists a unique cone $\sigma$ such that $\mathbf{u}'_1 + \cdots+\mathbf{u}'_{\ell}$ is in the interior of $\sigma$. Let $\mathbf{u}_1,\dots,\mathbf{u}_{r}$ be the primitive generators of $\sigma$. Then, there exist positive integers $a_1,\dots,a_{r}$ such that 
	\begin{equation}\label{equation_primitive_relation}
	\mathbf{u}'_1+\cdots+\mathbf{u}'_{\ell} = a_1 \mathbf{u}_1 + \cdots+ a_{r} \mathbf{u}_{r}.
	\end{equation}
(If the sum is the zero vector, then the set $\{\mathbf{u}_1,\dots,\mathbf{u}_{r}\}$ is assumed to be empty.) We call the linear relation in~\eqref{equation_primitive_relation} the \emph{primitive relation} corresponding to the primitive collection $P$; and we define the \emph{degree} $\deg P$ of the primitive collection $P$ as the number $\ell - (a_1+\cdots+a_r)$. Using Batyrev's criterion~\cite{Batyrev} for a smooth projective toric variety, we obtain the following. 
\begin{lemma} \label{lemm:Fano}
	The toric variety $X(\Sigma_T)$ is Fano. 
\end{lemma}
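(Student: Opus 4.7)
The plan is to apply Batyrev's criterion, which states that a smooth projective toric variety is Fano if and only if every primitive collection has strictly positive degree. Since Lemma~\ref{lemm:fan} identifies the primitive collections of $\Sigma_T$ as exactly the pairs $\{\mathbf{v}_k,\mathbf{w}_k\}$ for $k \in [n]$, it suffices to compute the degree of each such pair.

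First, I would invoke Lemma~\ref{lemm:fano_condition} to read off the primitive relation for each $k \in [n]$. For the distinguished index $k_0$, the relation is $\mathbf{v}_{k_0}+\mathbf{w}_{k_0}=\mathbf{0}$, so the right-hand side is empty and the degree equals $2$. For any $k \in [n]\setminus\{k_0\}$, the relation takes the form $\mathbf{v}_k+\mathbf{w}_k=\mathbf{v}_{k_{\ll}}$ or $\mathbf{v}_k+\mathbf{w}_k=\mathbf{w}_{k_{\rr}}$; in both cases the right-hand side is a single primitive ray generator of $\Sigma_T$ with coefficient $1$, so the degree equals $2-1=1$.

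Therefore every primitive collection has degree at least $1>0$, and Batyrev's criterion yields that $X(\Sigma_T)$ is Fano. The only subtlety in writing this up is to check that the single vector appearing on the right-hand side of the primitive relation is genuinely a ray generator of a cone in $\Sigma_T$ (so that~\eqref{equation_primitive_relation} is a bona fide primitive relation and not some coincidental linear identity); but this is immediate from Lemma~\ref{lemm:fan}, since any singleton subset of $\{\mathbf{v}_k,\mathbf{w}_k \mid k \in [n]\}$ trivially satisfies the condition that it does not contain a pair $\{\mathbf{v}_i,\mathbf{w}_i\}$, hence spans a ray of $\Sigma_T$. No step here is a real obstacle: the combinatorial input from Lemma~\ref{lemm:fano_condition} has already done the essential work.
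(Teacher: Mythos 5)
Your proposal is correct and follows essentially the same route as the paper's own proof: identify the primitive collections via Lemma~\ref{lemm:fan}, read off the primitive relations and degrees from Lemma~\ref{lemm:fano_condition} (degree $2$ for $k_0$, degree $1$ otherwise), and conclude by Batyrev's criterion. The extra sanity check you include — that the singleton on the right-hand side is actually a ray of $\Sigma_T$ — is a sensible detail but not a point of divergence from the paper.
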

\begin{proof}
	Let $P$ be any primitive collection of $\Sigma_T$. Then $P=\{\mathbf{v}_k,\mathbf{w}_k\}$ for some $k\in [n]$ by Lemma~\ref{lemm:fan} and $\deg P>0$ by Lemma~\ref{lemm:fano_condition}, indeed $\deg P=2$ when $k=k_0$ and $1$ otherwise. Therefore, $X(\Sigma_T)$ is Fano by Batyrev's criterion~\cite[Proposition~2.3.6]{Batyrev}. 
\end{proof}

We say that two binary trees $\B$ and $\B'$ are isomorphic as \textit{unordered rooted trees} if there is a bijection $f \colon V(\B) \to V(\B')$ between the set of vertices which sends the root of $\B$ to the root of~$\B'$ and induces a bijection between the set of edges (which does not need to preserve the \textit{left} and \textit{right} children). We note that a binary tree obtained from $\B$ by interchanging the left and right children of a vertex is isomorphic to $\B$ and any isomorphism is obtained by a composition of this operation at vertices. 

\begin{theorem}\label{theo:fan_and_toric_of_Catalan}
	Let $T$ and $T'$ be triangulations of $\polygon{n+2}$. Then the fans $\Sigma_T$ and $\Sigma_{T'}$ are isomorphic \textup{(}equivalently, the toric varieties $X(\Sigma_T)$ and $X(\Sigma_{T'})$ are isomorphic\textup{)} if and only if the binary trees~$\B_{T}$ and $\B_{T'}$ are isomorphic as unordered rooted trees.
\end{theorem}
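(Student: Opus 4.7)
The strategy is to read off both the fan and the (unordered) binary tree from the same underlying combinatorial datum, namely the set of primitive collections $\{\mathbf{v}_k,\mathbf{w}_k\}$ together with the primitive relations of Lemma~\ref{lemm:fano_condition}. A lattice isomorphism between $\Sigma_T$ and $\Sigma_{T'}$ is allowed to (i) permute the indices $k \in [n]$ and (ii) swap $\mathbf{v}_k$ with $\mathbf{w}_k$ within each pair, and exactly these two freedoms are what is lost when one passes from the vertex-labeled ordered tree $\B_{(\varphi,\sigma)}$ of Corollary~\ref{coro:binary_trees} to an unordered rooted tree.

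For the direction $(\Rightarrow)$, let $\Psi\colon \Sigma_T \to \Sigma_{T'}$ be a fan isomorphism. Since $\Psi$ sends ray generators to ray generators and primitive collections to primitive collections, by Lemma~\ref{lemm:fan} there is a bijection $f\colon [n] \to [n]$ with $\Psi(\{\mathbf{v}_k,\mathbf{w}_k\})=\{\mathbf{v}'_{f(k)},\mathbf{w}'_{f(k)}\}$. The unique degree-$2$ primitive relation is the one at the root (Lemma~\ref{lemm:fano_condition}), so $f(k_0)=k'_0$. For $k\ne k_0$, applying $\Psi$ to the relation $\mathbf{v}_k+\mathbf{w}_k\in\{\mathbf{v}_{\varphi(k)},\mathbf{w}_{\varphi(k)}\}$ yields $\mathbf{v}'_{f(k)}+\mathbf{w}'_{f(k)}\in\{\mathbf{v}'_{f(\varphi(k))},\mathbf{w}'_{f(\varphi(k))}\}$, and comparing with the primitive relation at $f(k)$ on the $T'$ side forces $\varphi_{T'}(f(k))=f(\varphi(k))$. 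So $f$ is an isomorphism of unordered rooted trees $\B_T\to\B_{T'}$.

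For the converse $(\Leftarrow)$, let $f$ be an unordered rooted tree isomorphism. I construct $\Psi$ by the recipe
\[
\Psi(\mathbf{v}_k)=\begin{cases}\mathbf{v}'_{f(k)} & \text{if } \epsilon_k=+1,\\ \mathbf{w}'_{f(k)} & \text{if } \epsilon_k=-1,\end{cases}
\]
where the sign function $\epsilon\colon [n]\to\{\pm 1\}$ records, for each non-leaf vertex $j$, whether $f$ preserves or reverses the left/right orientation at $j$: set $\epsilon_j=+1$ when $\sigma_T(k)=\sigma_{T'}(f(k))$ for some (equivalently, any) child $k$ of $j$, and $\epsilon_j=-1$ otherwise; at leaves set $\epsilon_j=+1$. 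This rule is consistent at any $j$ with two children because, by Proposition~\ref{lemm:binary}, the two children have opposite $\sigma$-values, and so do their images under $f$. Since $\{\mathbf{v}_k\}_{k\in[n]}$ is a $\Z$-basis of $N$ (dual to the basis $\{\mathbf{p}_k\}$ of $M$ by Proposition~\ref{prop_dual}), $\Psi$ extends uniquely to a linear map; the analogous argument on the $T'$ side, using that each $\mathbf{w}'_j$ differs from $\mathbf{v}'_j$ by a $\Z$-linear combination of ancestor vectors, shows that $\{\Psi(\mathbf{v}_k)\}_k$ is again a $\Z$-basis of $N'$, so $\Psi$ is a lattice isomorphism. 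A short induction on distance from the root in $\B_T$, using Lemma~\ref{lemm:fano_condition}, then verifies that $\Psi$ sends each $\mathbf{w}_k$ to $\mathbf{w}'_{f(k)}$ or $\mathbf{v}'_{f(k)}$ according to $\epsilon_k$; this is exactly what the definition of $\epsilon$ was engineered to guarantee. Hence $\Psi$ carries the rays of $\Sigma_T$ bijectively onto those of $\Sigma_{T'}$ while preserving primitive collections, and by the description of $\Sigma_T$ in Lemma~\ref{lemm:fan} it is a fan isomorphism.

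The main obstacle is the converse direction, where the $\mathbf{v}_k\leftrightarrow\mathbf{w}_k$ swaps must be coordinated globally so that every primitive relation matches up after transport; the decisive point is the consistency of $\epsilon$ at a vertex with two children, which is precisely where Proposition~\ref{lemm:binary} is needed.
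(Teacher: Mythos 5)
Your proof is correct, and it follows the same overall strategy as the paper's---reduce the question to a bijection between primitive collections that is compatible with the primitive relations of Lemma~\ref{lemm:fano_condition}. The forward direction is essentially identical in substance (though the paper routes it through Batyrev's classification of Fano toric varieties, while you argue directly that a fan isomorphism carries rays to rays and primitive collections to primitive collections, which is a bit more elementary and self-contained). Where you genuinely diverge is in the converse: the paper disposes of it in two sentences by reducing to elementary left--right swaps and asserting that each such swap yields a lattice isomorphism, whereas you construct the lattice map $\Psi$ in one shot from a globally defined sign function $\epsilon$ recording, at each vertex $j$, whether $f$ preserves or reverses orientation there. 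This buys you an explicit, checkable formula and avoids having to exhibit the intermediate triangulations realizing the sequence of swaps; the price is the bookkeeping needed to show that $\epsilon$ is well-defined at a vertex with two children, which you correctly pin on Proposition~\ref{lemm:binary}, and to verify by induction from the root that $\Psi(\mathbf{w}_k)$ lands in $\{\mathbf v'_{f(k)},\mathbf w'_{f(k)}\}$ and that the image of $\{\mathbf v_k\}$ is again a $\Z$-basis. I traced through that case analysis and it works (incidentally, the paper's own proof has the roles of left and right children swapped in the displayed primitive relation relative to Definition~\ref{def_binary_tree_2} and Lemma~\ref{lemm:fano_condition}, a harmless typo; your argument is agnostic to that choice since it is symmetric under simultaneously relabeling left/right on both sides).
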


\begin{proof}
We first notice that both fans $\Sigma_T$ and $\Sigma_{T'}$ define Fano toric varieties by Lemma~\ref{lemm:Fano}. Recall from~\cite[Proposition~2.1.8, Theorem~2.2.4]{Batyrev} that two Fano toric varieties $X(\Sigma)$ and $X(\Sigma')$ for $\Sigma \subset N \otimes \R$ and $\Sigma' \subset N' \otimes \R$ are isomorphic if and only if there exists a lattice isomorphism $\Phi \colon N \to N'$ providing a bijection between  $\Sigma$ and $\Sigma'$ such that $\Phi$ preserves each primitive collection and their relation. 

We denote the lattice and vectors associated with $T'$ by putting a prime. If the fans~$\Sigma_T$ and~$\Sigma_{T'}$ are isomorphic, then there is an isomorphism $\Phi$ between the lattices $N$ and $N'$ preserving the cones and primitive collections in $\Sigma_T$ and $\Sigma_{T'}$. Therefore, for each $i \in [n]$, there exists an index $\eta(i)\in[n]$ such that $\{\Phi(\mathbf v_i), \Phi(\mathbf w_i)\} = \{\mathbf v_{\eta(i)}', \mathbf w_{\eta(i)}'\}$ holds, so $\Phi(\mathbf v_i)$ is either $\mathbf v_{\eta(j)}'$ or $\mathbf w_{\eta(j)}'$. Since $\Phi \colon N \to N'$ is an isomorphism, $\eta \colon [n] \to [n]$ is a bijection.
Note that for a triangulation $T$, the binary tree $\B_T$ encodes the information of primitive relations in $\Sigma_T$ by Corollary~\ref{coro:binary_trees}. Indeed, for each $k \in [n] \setminus \{k_0\}$, we have a primitive relation
\[
\mathbf{v}_k + \mathbf{w}_k = \begin{cases}
\mathbf{v}_{\varphi(k)} & \text{ if $k$ is the left child of $\varphi(k)$}, \\
\mathbf{w}_{\varphi(k)} & \text{ if $k$ is the right child of $\varphi(k)$}.
\end{cases}
\]
Since $\Phi$ preserves the primitive relation, the bijection $\eta$ provides an isomorphism between the binary trees $\B_{T}$ and $\B_{T'}$ as unordered rooted trees.

Suppose that two binary trees $\B_{T}$ and $\B_{T'}$ are isomorphic as unordered rooted trees. We already notice that any isomorphism between unordered rooted trees is obtained by a composition of interchanging the left and right children of a vertex. Accordingly, we have a lattice isomorphism preserving each primitive collection and their relation. Hence two fans $\Sigma_T$ and~$\Sigma_{T'}$ are isomorphic and the result follows. 
\end{proof}

By Theorem~\ref{theo:fan_and_toric_of_Catalan}, the number of isomorphism classes of $n$-dimensional toric varieties of Catalan type is the same as that of unordered binary trees with $n$ vertices. It is known that the latter is the Wedderburn--Etherington number $b_{n+1}$ mentioned in the introduction.

\begin{corollary}\label{cor_enumerate}
	The number of isomorphism classes of $n$-dimensional toric varieties of Catalan type is the Wedderburn--Etherington number $b_{n+1}$. 
\end{corollary}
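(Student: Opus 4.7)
The plan is to combine Theorem~\ref{theo:fan_and_toric_of_Catalan} with a classical combinatorial interpretation of the Wedderburn--Etherington numbers. First I would invoke the theorem: the assignment $T \mapsto \B_T$ already descends to a bijection between isomorphism classes of $n$-dimensional toric varieties of Catalan type and isomorphism classes of unordered rooted binary trees on $n$ vertices. So the only remaining content is the purely combinatorial statement that the number of unordered binary trees on $n$ vertices equals $b_{n+1}$.

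Next, I would reduce unordered binary trees on $n$ vertices to unordered full binary trees on $n+1$ leaves via the standard ``phantom leaf'' construction already recalled in the remark after Lemma~\ref{lemma_uniqueness_kl_k_kr}: given $\B$, attach a leaf wherever a vertex is missing a left or right child. The resulting tree is full, its $n$ internal vertices are precisely the vertices of $\B$, and because a full binary tree always has one more leaf than it has internal vertices, it has exactly $n+1$ leaves. The construction inverts by deleting the added leaves, and forgetting the left/right labels on $\B$ corresponds bijectively to forgetting them on its enlargement, so the map descends to isomorphism classes of unordered rooted trees.

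Finally, I would appeal to the standard interpretation of $b_m$ as the number of unordered full binary trees with $m$ leaves: each such tree encodes a way to parenthesize a commutative nonassociative product of $m$ letters (leaves are the letters, internal vertices are the products), and the recurrence recalled in the introduction is exactly the recursion obtained by decomposing a full binary tree at its root into its two subtrees, with the $\tfrac12$ factors accounting for the unordered swap. Setting $m = n+1$ completes the argument. I do not expect any serious obstacle: the corollary is essentially a packaging of Theorem~\ref{theo:fan_and_toric_of_Catalan} with this classical count, and the only thing to be mildly careful about is bookkeeping the shift $n \leftrightarrow n+1$ between vertices and leaves.
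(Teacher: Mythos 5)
Your proposal is correct and follows essentially the same route as the paper: invoke Theorem~\ref{theo:fan_and_toric_of_Catalan} to reduce the count to unordered binary trees on $n$ vertices, then identify that count with $b_{n+1}$. The paper simply cites the latter identification as known, whereas you supply the standard proof (phantom-leaf bijection to unordered full binary trees on $n+1$ leaves, then the parenthesization interpretation of $b_m$); the bookkeeping of the $n \leftrightarrow n+1$ shift and the descent of the bijection to unordered isomorphism classes are both handled correctly.
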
 

\begin{remark}\label{rmk_Bott_manifolds}
A smooth projective toric variety associated with the normal fan of a 
non-singular lattice $n$-cube is called a \textit{Bott manifold}.
A Bott manifold can also be obtained as the total space of an iterated $\C 
P^1$-bundle over a point called a \emph{Bott tower} (see~\cite{GK94Bott}). So, 
our toric variety $X(\Sigma_T)$ is a 
Bott manifold and Fano. A Fano Bott manifold is not necessarily of Catalan type, 
i.e. not necessarily of the form~$X(\Sigma_T)$. However, one can associate a 
graph, called a \textit{signed rooted forest}, with a Fano Bott manifold, and Fano 
Bott manifolds can be classified in terms of signed rooted forests 
(see~\cite{HigashitaniKurimoto20, CLMP}). As one may expect, the signed rooted 
forest associated with $X(\Sigma_T)$ is the binary tree $\B_T = 
\B_{(\varphi,\sigma)}$ with the sign~$\sigma(k)$ assigned to each edge 
$\{k,\varphi(k)\}$, see Figure~\ref{fig:vertex-labeled tree}.
\end{remark}

\section{Permutations, polygon triangulations, and Bruhat interval polytopes} \label{sec:permutations_etc}

As explained in Section~\ref{sec_Catalan_numbers}, there is a canonical bijection between the set of binary trees with~$n$ vertices and that of triangulations of $\polygon{n+2}$. In this section, we will explain how a binary tree having $n$ vertices, equivalently a triangulation of $\polygon{n+2}$, is associated with a permutation in~$\mathfrak{S}_n$.
The permutation is also associated with a Bruhat interval polytope, which is combinatorially equivalent to an $n$-cube in our case. The vectors $\mathbf{v}_1,\dots,\mathbf{v}_n, \mathbf{w}_1,\dots,\mathbf{w}_n$ introduced in Section~\ref{sec_left_and_right_trees} turn out to be the facet normal vectors of the Bruhat interval polytope.

We recall a surjection from the set $\mathfrak{S}_n$ of permutations on $[n]$ to that of binary trees with $n$ vertices (cf.~\cite[Appendix~A]{LFCG_dialgebras}). 
{To} a permutation $u \in \mathfrak{S}_n$, we associate a binary tree $\psi(u)$ by finding the smallest number in the one-line notation of $u$ inductively. We start with the one-line notation $u(1) u(2) \ \cdots \ u(n)$ of $u$. 
The smallest integer, say $u(p)$, in the sequence (which is $1$ here) becomes the root of the binary tree $\psi(u)$ with $n$ vertices. Then the subsequence $u(1) \ \cdots \ u(p-1)$ will provide the left subtree of the root vertex, and the subsequence $u(p+1) \ \cdots \ u(n)$ will provide the right subtree of the root vertex. 
More precisely, the smallest integer in the sequence $u(1) \ \cdots \ u(p-1)$ presents the root of a binary tree with $p-1$ vertices, and it is the left child of the root vertex of~$\psi(u)$. On the other hand, the smallest integer in the sequence $u(p+1) \ \cdots \ u(n)$ presents the root of a binary tree with $n-p$ vertices, and it is the right child of the root vertex of $\psi(u)$. Continuing this process, we get the binary tree $\psi(u)$. 

The binary tree $\psi(u)$ can also be obtained by drawing vertices on the $n\times n$ grid. For a permutation $u \in \mathfrak{S}_n$, we put $n$ vertices on $(u(i),i)$ position for $i=1,\dots,n$. (Here, we read the coordinates from top to bottom and left to right, like matrix coordinates.) The vertex~$(1, u^{-1}(1))$ is placed in the first row, and it presents the root. We measure a distance between two vertices by taking the difference in row position. By connecting the left (resp. right) closest point of the root vertex, we obtain the left (resp. right) child. Continuing this process, we obtain the binary tree $\psi(u)$. 

For example, let $u = 31687524$. The binary tree $\psi(u)$ has the root at $(1,2)$ and the root has two children such that the left subtree is given by $u(1) = 3$, which is the tree consisting of one vertex, that is, the left child of the root is $(3,1)$.
The right subtree is given by $u(3) \ u(4) \ \cdots \ u(8) = 687524$, which provides a binary tree with $6$ vertices and the number $2$ will present the root vertex, i.e., the right child of the vertex $(1,2)$ is $(2,7)$. We depict the $8 \times 8$ grid and the corresponding tree~$\psi(31687524)$ in Figure~\ref{fig_binary_tree_and_permutation}; the binary trees $\psi(u)$ of permutations $u \in \mathfrak{S}_3$ {in} Figure~\ref{fig_permutation_and_tree}.
\begin{figure}[h]
	\begin{center}\begin{tikzpicture}[scale=0.6,dot/.style = {circle, fill=black,  inner sep = 1.5pt, },
		level distance=5mm,	
		level 1/.style={sibling distance=5mm},	
		level 2/.style={sibling distance=5mm},
		level 3/.style={sibling distance=5mm}	
		]
		
		\foreach\l[count=\y] in {8,...,1}{
			\draw[gray!30](1,\y)--(8,\y);
			\draw[gray!30] (\y,1)--(\y,8);
			\node[gray] at (0.5,\y) {\small$\l$};
			\node[gray] at (\y,8.5) {\small$\y$};
			
		}
		
		\node[dot] (1) at (1,6) {};
		\node[black, draw, circle, inner sep = 1.5 pt, fill=black, double] (2) at (2,8) {};
		\node[dot] (3) at (3,3) {};
		\node[dot] (4) at (4,1) {};
		\node[dot] (5) at (5,2) {};
		\node[dot] (6) at (6,4) {};
		\node[dot] (7) at (7,7) {};
		\node[dot] (8) at (8,5) {};
		
		\draw (1)--(2) 
		(2)--(7) 
		(7)--(8) 
		(7)--(6) 
		(6)--(3) 
		(3)--(5) 
		(5)--(4) ; 
		
		\end{tikzpicture}
	\end{center}
	\caption{The binary tree $\psi(u)$ for $u = 31687524$}\label{fig_binary_tree_and_permutation}
\end{figure}
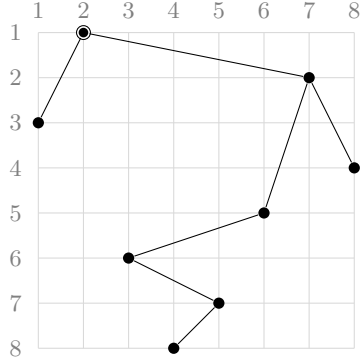

\begin{figure}[h]
\begin{tikzpicture}[scale = 0.6, dot/.style = {circle, fill=black, inner sep = 1.5pt, }]

\begin{scope}
\foreach\l[count=\y] in {3,...,1}{
 \draw[gray!30](1,\y)--(3,\y);
 \draw[gray!30] (\y,1)--(\y,3);
 \node[gray] at (0.5,\y) {\small$\l$};
 \node[gray] at (\y,3.5) {\small$\y$};
}

\node [black, draw, circle, inner sep = 1.5 pt, fill=black, double] (1) at (1,3) {};
\node[dot] (2) at (2,2) {};
\node[dot] (3) at (3,1) {}; 

\draw (1)--(2)--(3);

\node at (2,4.5) {$123$};
\end{scope}

\begin{scope}[xshift=3.5cm]
\foreach\l[count=\y] in {3,...,1}{
 \draw[gray!30](1,\y)--(3,\y);
 \draw[gray!30] (\y,1)--(\y,3);
 \node[gray] at (0.5,\y) {\small$\l$};
 \node[gray] at (\y,3.5) {\small$\y$};
}

\node [black, draw, circle, inner sep = 1.5 pt, fill=black, double] (1) at (1,3) {};
\node[dot] (2) at (2,1) {};
\node[dot] (3) at (3,2) {}; 

\draw (1)--(3)--(2);

\node at (2,4.5) {$132$};
\end{scope}

\begin{scope}[xshift=7cm]
\foreach\l[count=\y] in {3,...,1}{
 \draw[gray!30](1,\y)--(3,\y);
 \draw[gray!30] (\y,1)--(\y,3);
 \node[gray] at (0.5,\y) {\small$\l$};
 \node[gray] at (\y,3.5) {\small$\y$};
}

\node[dot] (1) at (1,2) {};
\node[dot] (2) at (2,1) {};
\node [black, draw, circle, inner sep = 1.5 pt, fill=black, double] (3) at (3,3) {}; 

\draw (3)--(1)--(2);

\node at (2,4.5) {$231$};
\end{scope}

\begin{scope}[xshift=10.5cm]
\foreach\l[count=\y] in {3,...,1}{
 \draw[gray!30](1,\y)--(3,\y);
 \draw[gray!30] (\y,1)--(\y,3);
 \node[gray] at (0.5,\y) {\small$\l$};
 \node[gray] at (\y,3.5) {\small$\y$};
}

\node[dot] (1) at (1,1) {};
\node[dot] (2) at (2,2) {};
\node [black, draw, circle, inner sep = 1.5 pt, fill=black, double] (3) at (3,3) {}; 

\draw (3)--(2)--(1);

\node at (2,4.5) {$321$};
\end{scope} 

\begin{scope}[xshift=14cm]
\foreach\l[count=\y] in {3,...,1}{
 \draw[gray!30](1,\y)--(3,\y);
 \draw[gray!30] (\y,1)--(\y,3);
 \node[gray] at (0.5,\y) {\small$\l$};
 \node[gray] at (\y,3.5) {\small$\y$};
}

\node[dot] (1) at (1,1) {};
\node [black, draw, circle, inner sep = 1.5 pt, fill=black, double] (2) at (2,3) {};
\node[dot] (3) at (3,2) {}; 

\draw (1)--(2)--(3);

\node at (2,4.5) {$312$};
\end{scope} 

\begin{scope}[xshift=17.5cm]
\foreach\l[count=\y] in {3,...,1}{
 \draw[gray!30](1,\y)--(3,\y);
 \draw[gray!30] (\y,1)--(\y,3);
 \node[gray] at (0.5,\y) {\small$\l$};
 \node[gray] at (\y,3.5) {\small$\y$};
}

\node[dot] (1) at (1,2) {};
\node [black, draw, circle, inner sep = 1.5 pt, fill=black, double] (2) at (2,3) {};
\node[dot] (3) at (3,1) {}; 

\draw (1)--(2)--(3);

\node at (2,4.5) {$213$};
\end{scope} 

\end{tikzpicture}
\caption{Permutations $u \in \mathfrak{S}_3$ and the corresponding binary trees $\psi(u)$.}\label{fig_permutation_and_tree}
\end{figure}
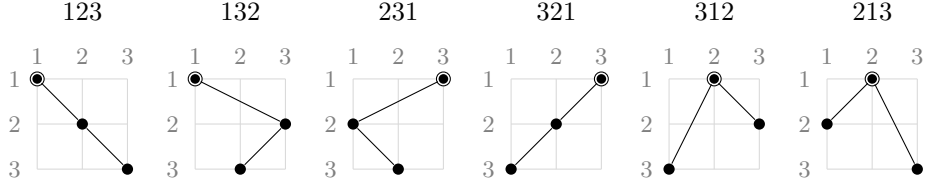

Because there is a canonical bijection between the set of binary trees with $n$ vertices and that of triangulations of $\polygon{n+2}$, we identify them so that we obtain the following. 

\begin{proposition}[{cf.~\cite[Appendix~A]{LFCG_dialgebras}}]\label{prop_permutations_and_binary_tress}\label{prop_psi}
	The assignment 
	\[
	\psi \colon \mathfrak{S}_n \twoheadrightarrow \{ \text{binary trees with $n$ vertices} \}=\{\text{triangulations of $\polygon{n+2}$}\} 
	\]
	is surjective. 
\end{proposition}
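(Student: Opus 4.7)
The plan is to prove surjectivity by induction on $n$, explicitly constructing a permutation $u \in \mathfrak{S}_n$ whose image $\psi(u)$ is any prescribed binary tree $B$ with $n$ vertices. The key observation is that the definition of $\psi$ only uses the relative order of the entries of $u$: if $u' \in \mathfrak{S}_n$ satisfies $u'(i) < u'(j) \iff u(i) < u(j)$ for all $i,j$, then $\psi(u') = \psi(u)$ (the recursion picks out the position of the minimum, and the left/right subsequences again inherit only their relative order). Hence ``shifting values by a constant'' on a subsequence does not affect the output tree.

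Base cases: for $n = 0$ the only binary tree is empty and the statement is vacuous, and for $n = 1$ the identity in $\mathfrak{S}_1$ maps to the unique one-vertex tree.

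For the inductive step, suppose surjectivity holds for all positive integers less than $n$, and let $B$ be a binary tree with $n$ vertices. Let $B_L$ and $B_R$ be the left and right subtrees of the root of $B$, with $\ell$ and $r$ vertices respectively, so $\ell + r = n-1$. By the inductive hypothesis (applied with the convention that only the empty tree corresponds to $\mathfrak{S}_0$) choose permutations $u_L \in \mathfrak{S}_\ell$ and $u_R \in \mathfrak{S}_r$ with $\psi(u_L) = B_L$ and $\psi(u_R) = B_R$. Define $u \in \mathfrak{S}_n$ by
\[
u(i) = \begin{cases} u_L(i) + 1 & \text{if } 1 \le i \le \ell, \\ 1 & \text{if } i = \ell + 1, \\ u_R(i - \ell - 1) + \ell + 1 & \text{if } \ell + 2 \le i \le n. \end{cases}
\]
Then $1$ is the minimum value in the one-line notation of $u$, located at position $\ell + 1$, so the root of $\psi(u)$ corresponds to this position and has a left subtree built from $u(1) \cdots u(\ell)$ and a right subtree built from $u(\ell+2) \cdots u(n)$. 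The former sequence is obtained from $u_L$ by uniformly shifting values by $1$, hence has the same pattern as $u_L$ and produces the tree $\psi(u_L) = B_L$; likewise the right part produces $\psi(u_R) = B_R$. Therefore $\psi(u) = B$, proving surjectivity.

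There is no serious obstacle here: the only point to articulate carefully is the pattern-invariance of $\psi$, which justifies that the two shifted subsequences still recover $B_L$ and $B_R$ under the recursion. The bijection with polygon triangulations of $\polygon{n+2}$ was already recalled in Section~\ref{sec_Catalan_numbers}, so the second equality in the statement requires no further argument.
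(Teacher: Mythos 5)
Your proof is correct. The paper itself does not supply an argument for this proposition—it simply cites \cite[Appendix~A]{LFCG_dialgebras}—so there is no "paper proof" to compare against; your proof is a legitimate self-contained filling-in of that reference. The structure of your argument is exactly the natural one: (i) observe that $\psi$ is \emph{pattern-determined}, i.e.\ $\psi(u)$ depends only on the relative order of the entries of $u$, because the recursion repeatedly locates the position of the minimum of a subsequence and then recurses on the two flanking subsequences, operations that only see relative order; (ii) given a target tree $B$ with left/right subtrees $B_L, B_R$ of sizes $\ell, r$, splice permutations $u_L, u_R$ realizing $B_L, B_R$ (obtained by induction) into a single $u \in \mathfrak{S}_n$ with the smallest value $1$ placed at position $\ell+1$, the values $\{2,\dots,\ell+1\}$ carrying the pattern of $u_L$ on its left, and the values $\{\ell+2,\dots,n\}$ carrying the pattern of $u_R$ on its right. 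Your value ranges $u_L(i)+1 \in \{2,\dots,\ell+1\}$ and $u_R(i-\ell-1)+\ell+1 \in \{\ell+2,\dots,n\}$ are disjoint and together with $1$ exhaust $[n]$, so $u$ is indeed a permutation, and by pattern-invariance the two flanking subsequences recover $B_L$ and $B_R$ under $\psi$. The treatment of the degenerate cases $\ell=0$ or $r=0$ via the convention that $\mathfrak{S}_0$ corresponds to the empty tree is also handled correctly. This argument is essentially a formalization of how one would read a permutation off a decreasing binary tree, and it is the standard proof one would expect to find in the cited reference.
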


We denote by $s_i$ the \defi{simple transposition} in $\Sn$ interchanging $i$ and $i+1$ for $i =1,\dots,n$. We also denote by $t_{i,j}$ {$(i<j)$} the transposition interchanging two numbers $i$ and $j$, so $s_i = t_{i,i+1}$. For~$w \in \Sn$, an expression $w = s_{i_1} \cdots s_{i_{\ell}}$ is called \defi{reduced} if it is minimal among all such expressions. We denote by $\ell(w)$ the length of any reduced expression of $w$ and call it the \defi{length} of~$w$. The Bruhat order is a partial order on $\Sn$ defined to be $v \leq w$ if a reduced decomposition of $v$ is a substring of some reduced decomposition of $w$. We recall from~\cite[\S3.2]{BC12Singular} that for permutations~$v$ and~$w$ in $\mathfrak{S}_{n+1}$, we have
\begin{equation}\label{eq_Bruhat_order}
v \leq w \iff \{ v(1),\dots,v(i)\}\!\!\uparrow~ \leq \{ w(1),\dots,w(i)\}\!\!\uparrow \quad \text{ for all } i=1,\dots,{n}.
\end{equation}
Here, for a set $\{ a_1,\dots,a_i\}$ of distinct integers, $\{ a_1,\dots,a_i\}\!\!\uparrow$ denotes the ordered $i$-tuple obtained from the set $\{a_1,\dots,a_i\}$ by arranging its elements in ascending order. For two ordered tuples $(a_1,\dots,a_i)$ and $(b_1,\dots,b_i)$ of distinct numbers, we define $(a_1,\dots,a_i) \leq (b_1,\dots,b_i)$ if $a_k \leq b_k$ for all $1 \leq k \leq i$. 

For elements $v$ and $w$ in $\mathfrak{S}_{n+1}$ with $v \leq w$, the \defi{Bruhat interval polytope} 
$\Q_{v,w}$ is defined as follows:
\[
\Q_{v,w} = \operatorname{Conv}\{ (z(1),z(2),\dots,z(n+1)) \in \R^{n+1} 
\mid v \leq z \leq w\}.
\]
It is known that 
\begin{equation} \label{eq:Qvw_dimension}
\dim_{\R} \Q_{v,w}\le \ell(w)-\ell(v)
\end{equation}
and $\Q_{v,w}$ is called \textit{toric} if the equality above holds, i.e. $\dim_{\R} \Q_{v,w}=\ell(w)-\ell(v)$. (We will see in the next section that $\Q_{v,w}$ is toric if and only if the corresponding Richardson variety is a toric variety.) 
The Bruhat interval polytope $\Q_{v,w}$ is {combinatorially equivalent to} a cube if and only if it is toric and simple (\cite{TsukermanWilliams}, \cite{LMP2021}). There are many pairs $(v,w)$ such that $\Q_{v,w}$ is {combinatorially equivalent to} a cube. 
In the following, we will consider such pairs associated with a permutation in $\mathfrak{S}_n$. 

For a permutation $u \in \mathfrak{S}_n$, we define permutations $\uh{u}$ and $\ut{u}$ in $\Sn$ by
\[
\uh{u}(i) = \begin{cases}
1 & \text{ if } i = 1, \\
u(i-1)+1 & \text{ if }2 \leq i \leq n+1, 
\end{cases}
\qquad
\ut{u}(i) = \begin{cases}
u(i) & \text{ if } 1 \leq i \leq n, \\
n+1 & \text{ if } i =n+1.
\end{cases}
\]
For example, if $u = 2314$, then $\uh{u} = 13425$ and $\ut{u} = 23145$. As one may see, the permutation $\uh{u}$ is obtained from $u$ by putting the additional number $1$ at the \textit{head} while $\ut{u}$ is obtained from $u$ by putting the additional number $n+1$ at the \textit{tail}. In the notation, $\head$ stands for \textit{head} and $\tail$ stands for \textit{tail}. 

We set
\[
s(1,n) = s_1 s_2 \cdots s_{n}, \quad s(n,1) = s_{n} s_{n-1} \cdots s_1.
\]
One notes that if a pair $(v,w)$ of elements in $\Sn$ satisfies 
\[
w = v s(1,n)\quad \text{(resp. $w=vs(n,1)$)} \quad \text{ and } \quad\ell(w) = \ell(v) + n,
\]
then $v(1)=1$ (resp. $v(n+1)=n+1$), so that 
\begin{equation} \label{eq:pairvw}
\text{$(v,w) = (\uh{u}, \uh{u} s(1,n))$ (resp. $(v,w)=(\ut{u}, \ut{u} s(n,1)))$\quad for some $u \in \mathfrak{S}_{n}$.}
\end{equation}

It is shown in~\cite[\S 5 and \S 6]{HHMP19} (see also \cite{LMP2021}) that the Bruhat interval polytope $\Q_{v,w}$ for the pair~$(v,w)$ in \eqref{eq:pairvw} is {combinatorially equivalent to} an $n$-cube and our concern in this section is the pairs in \eqref{eq:pairvw}. We first consider the former case $(v,w) = (\uh{u}, \uh{u} s(1,n))$. 

\begin{proposition}\label{prop_u_head_atoms_and_lTrT}
	Let $u \in \mathfrak{S}_n$ and let $T = \psi(u)$ be the corresponding triangulation of $\polygon{n+2}$. We denote by $\lT$ and $\rT$ the left and right trees of $T$ as before. Then the edges of $\lT$ correspond to the atoms of the Bruhat interval $[\uh{u}, \uh{u}s(1,n)]$ while the edges of $\rT$ correspond to the coatoms of the Bruhat interval $[\uh{u}, \uh{u}s(1,n)]$. More precisely, 
	\[
	\begin{split}
	&\{{(i,j)} \mid \uh{u} \lessdot \uh{u} t_{i,j} \leq \uh{u} s(1,n) \} =
	\{ {(i,j)} \mid \{i-1,j-1\} \in E(\lT)\}, \\
	&\{{(i,j)} \mid \uh{u} \leq \uh{u} s(1,n) t_{i,j} \lessdot \uh{u} s(1,n) \} 
	= \{{(i,j)} \mid \{i,j\} \in E(\rT)\}.
	\end{split}
	\]
	Here, $x \lessdot y$ means that $x<y$ and there does not exist $z$ such that 
	$x < z < y$. 
\end{proposition}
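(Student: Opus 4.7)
The plan is to establish both bijections by analyzing the atoms and coatoms of $[\uh u, w]$ (with $w = \uh u\, s(1,n)$) directly from $u$, and then matching these to the edges of $\lT$ and $\rT$ through the recursive structure of $T = \psi(u)$. The key structural observation is that, under the recursive definition of $\psi(u)$, each vertex $k \in [n]$ of $\B_T$ (with $k$ being the middle-vertex label of the corresponding triangle) corresponds to a unique \emph{natural subinterval} $[a_k, b_k] \subseteq [1, n]$ on which the minimum of $u$ is attained precisely at position $k$; moreover $k_\ll = a_k - 1$ and $k_\rr = b_k + 1$. Naturality means that (whenever $a_k \geq 2$) $u(a_k - 1) < u(m)$ for every $m \in [a_k, b_k]$, and symmetrically at the right end when $b_k \leq n-1$. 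This follows by a short induction on the recursive depth of $\psi$.

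Next I would verify the atom-side correspondence. Using $\uh u(1) = 1$ and $\uh u(m) = u(m-1) + 1$ for $m \geq 2$, a direct check shows that for every $k \in [n]$ the pair $(i,j) = (k_\ll + 1, k+1) = (a_k, k+1)$ satisfies $\uh u \lessdot \uh u\, t_{i,j}$: the required inequality $\uh u(i) < \uh u(j)$ holds either trivially (if $a_k = 1$, since $\uh u(i) = 1$) or by naturality $u(a_k - 1) < u(k)$ (if $a_k \geq 2$), and strict minimality of $u(k)$ on $[a_k, b_k]$ rules out intermediate values. For the Bruhat condition $\uh u\, t_{i,j} \leq w$, I would apply the tableau criterion~\eqref{eq_Bruhat_order}. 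The sorted prefixes $\{\uh u\, t_{i,j}(1), \ldots, \uh u\, t_{i,j}(\ell)\}\!\!\uparrow$ agree with those of $\uh u$ for $\ell < i$ and $\ell \geq j$, so only the range $\ell \in [i, j-1]$ requires work; here a short multiset exchange based on $u(a_k - 1) < u(k) \leq u(\ell)$ for $\ell \in [a_k, k]$ shows that each such sorted prefix is dominated entrywise by $\{w(1), \ldots, w(\ell)\}\!\!\uparrow = \{u(1)+1, \ldots, u(\ell)+1\}\!\!\uparrow$.

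The coatom-side correspondence is then handled by a parallel argument: for each $k \in [n]$, the pair $(i,j) = (k, k_\rr) = (k, b_k + 1)$ satisfies $w\, t_{i,j} \lessdot w$, since $w(i) > w(j)$ becomes $u(k) > u(b_k + 1)$ (trivial if $b_k = n$, and naturality otherwise), while minimality of $u(k)$ on $[a_k, b_k]$ rules out intermediate values. The Bruhat condition $\uh u \leq w\, t_{i,j}$ then follows from a symmetric tableau calculation. A counting argument completes both bijections: $\lT$ and $\rT$ are trees on $n+1$ vertices so each has exactly $n$ edges, while the Bruhat interval polytope $\Q_{\uh u, w}$ is combinatorially an $n$-cube (as recalled from \cite{HHMP19, LMP2021}), so $[\uh u, w]$ has exactly $n$ atoms and $n$ coatoms. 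Since the maps $k \mapsto (k_\ll + 1, k + 1)$ and $k \mapsto (k, k_\rr)$ are injective (in the second and first coordinate respectively), they are bijections onto the sets of atom pairs and coatom pairs.

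The main obstacle will be the tableau-criterion verification of $\uh u\, t_{i,j} \leq w$ and the dual $\uh u \leq w\, t_{i,j}$: the covering relations drop out cleanly from the minimality of $u(k)$ on $[a_k, b_k]$, but confirming that these atoms (resp.\ coatoms) actually lie in $[\uh u, w]$ requires a careful sorted-multiset comparison that genuinely exploits the naturality of $[a_k, b_k]$ at the relevant endpoint.
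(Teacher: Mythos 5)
Your approach is correct, but it takes a genuinely different route from the paper.  The paper does not re-derive the characterization of atoms and coatoms: it cites \cite[\S 7 and \S 8]{LMP2021}, which states that for each $j=2,\dots,n+1$ there is a unique $i<j$ with $v(i)<v(j)$ and $v(k)>v(j)$ for $i<k<j$, and the $n$ permutations $vt_{i,j}$ so obtained are exactly the atoms (with a mirror statement for coatoms).  Given this black box, the paper's matching with $E(\lT)$ and $E(\rT)$ is a short induction that simply peels off the root triangle $\{0,k_0,n+1\}$ (with $k_0=u^{-1}(1)$) and recurses on the two sub-polygons.  You instead prove directly that the $n$ pairs $(k_\ll+1,k+1)$ give atoms and the $n$ pairs $(k,k_\rr)$ give coatoms: the covering relations drop out of the minimality of $u(k)$ on $[a_k,b_k]$ together with the endpoint ``naturality'' $u(a_k-1)<u(k)\leq u(\ell)$, and the membership $\uh{u}t_{i,j}\leq w$ (resp.\ $\uh{u}\leq wt_{i,j}$) follows from a sorted-multiset comparison via the tableau criterion.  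I checked the multiset exchange in both cases: writing $\alpha<\beta\leq\gamma$ for the three relevant shifted values, the indicator-function difference $[1\leq x]-[\alpha\leq x]+[\beta\leq x]-[\gamma\leq x]$ is nonnegative for all $x$, which is exactly the entrywise domination needed.  Your injectivity-plus-counting step then plays the role that the ``uniqueness/exactly $n$'' clause of the cited theorem plays in the paper.  What you gain is a self-contained verification that does not lean on the detailed atom/coatom analysis of \cite{LMP2021} (only on the cube/toric fact that there are exactly $n$ atoms and $n$ coatoms); what you pay for is the tableau computation and the naturality lemma, both of which you sketch rather than spell out.  If you want to make the write-up airtight, the two places that deserve a full paragraph each are: (a) the inductive proof that $k_\ll=a_k-1$, $k_\rr=b_k+1$ together with the endpoint inequalities, which reduces to the observation that whenever $a_k\geq 2$ the vertex $a_k-1$ is the middle vertex of some ancestor triangle, so $u(a_k-1)$ is the minimum of $u$ over a larger interval containing $[a_k,b_k]$; and (b) the multiset comparison, where the $a_k=1$ and $a_k\geq 2$ cases behave slightly differently because $\uh{u}(1)=1$ participates or not.
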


\begin{proof}
	
	It follows from~\cite[\S 7 and \S8]{LMP2021} that the atoms and coatoms of the Bruhat interval \linebreak $[v,w]=[\uh{u}, \uh{u}s(1,n)]$ are given as follows: 
	\begin{enumerate}
		\item 
		For each $j=2,\dots, n+1$, there exists a unique $i$ such that
		\[
		i < j, \quad v(i) < v(j), \quad v(k) > v(j) \text{ for any }i < k < j. 
		\]
		The element $v t_{i,j}$ is an atom of the interval $[v,w]$. 
		The $n$ elements obtained in this way form the set of atoms of $[v,w]$.
		\item 
		For each $i=1,\dots,n$, there exists a unique $j$ such that
		\[
		i < j, \quad w(i) > w(j), \quad w(i) < w(k) \text{ for any }i < k < j. 
		\]
		The element $w t_{i,j}$ is a coatom of the interval $[v,w]$. 
		The $n$ elements obtained in this way form the set of coatoms of $[v,w]$.
	\end{enumerate}

We compare this with the description of the edges $E(\lT)$ and $E(\rT)$. Let $k = u^{-1}(1)$, that is, $v(k+1)=2$. Then, by the above description, $\{1,k+1\}$ provides an atom. Moreover, we have $w^{-1}(n+1) = 1$ and $w^{-1}(2) = k$. This implies that $\{k,n+1\}$ provides a coatom. On the other hand, consider the binary tree $\psi(u)$ and the corresponding polygon triangulation $T$. By definition of $\psi(u)$ and the association between rooted binary trees and polygon triangulations described in Section~\ref{sec_Catalan_numbers},  we have a triangle $\{0,k,n+1\}$ (having a distinguished edge $\{0,n+1\}$) in the $(n+2)$-gon triangulation $T$. Indeed, $\{0,k\} \in E(\lT)$ and $\{k,n+1\} \in E(\rT)$.  Continuing this process, i.e., choosing the minimum at each step, inductively for permutations given by $u(1) \cdots u(k-1)$ and $u(k+1) \cdots  u(n)$, we get the corresponding triangulation of the $(n+2)$-gon and this provides the set of edges of the left tree and that of the right tree. Hence the result follows. 
\end{proof}

\begin{example}\label{example_atom}
	Let $u = 31687524$. Then, the triangulation $T=\psi(u)$ is as shown in Figure~\ref{fig_atom_and_coatom_together}, $v=\uh{u}= 142798635$ and $w=\uh{u} s(1,8) = 427986351$. 
	There are $8$ atoms of the interval $[v,w]$ given by $v t_{i,j}$ where $(i,j)$ is one of the following pairs:
	\[
	(1,2), (1,3), (3,4), (4,5), (4,6), (3,7), (3,8), (8,9).
	\] 
	These pairs provide the edges of $\lT$ by subtracting $1$ from every component, which are
	\[
	(0,1), (0,2), (2,3), (3,4), (3,5), (2,6), (2,7), (7,8).
	\]
	On the other hand, there are $8$ coatoms given by $w t_{i,j}$ where $(i,j)$ is one of the following pairs: 
	\[
	(1,2), (2,9), (3,6), (4,5), (5,6), (6,7), (7,9), (8,9).
	\]
	These pairs are the edges of $\rT$.
\end{example}

\begin{theorem} \label{theo:normal_fan_s(1,n)}
	For $u\in \mathfrak{S}_n$, the normal fan of the Bruhat interval polytope $\Q_{\uh{u},\uh{u}s(1,n)}$ is the fan~$\Sigma_{\psi(u)}$ associated with the triangulation $\psi(u)$ of $\polygon{n+2}$. 
\end{theorem}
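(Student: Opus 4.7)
The plan is to identify the normal fan of $\Q_{v,w}$ step-by-step, where $T = \psi(u)$ and $(v,w) = (\uh u, \uh u s(1,n))$. Since $\ell(w)-\ell(v)=n$ and the pair $(v,w)$ is of the form treated in \cite{HHMP19, LMP2021}, $\Q_{v,w}$ is combinatorially an $n$-cube with $v$ and $w$ as antipodal vertices, hence a simple polytope. By Proposition~\ref{prop_u_head_atoms_and_lTrT}, the atoms of $[v,w]$ biject with $E(\lT)$: the atom corresponding to $\{k_\ll,k\}\in E(\lT)$ is $vt_{k_\ll+1,k+1}$, whose primitive edge vector from $v$ is $\mathbf{p}_k = e_{k_\ll+1}-e_{k+1}$. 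Proposition~\ref{prop_dual} then identifies the inward primitive facet normals at $v$ as the dual basis $\mathbf{v}_1,\dots,\mathbf{v}_n$, and the parallel analysis at $w$ yields inward normals $\mathbf{w}_1,\dots,\mathbf{w}_n$. Because $v,w$ are antipodal, no facet contains both, so the rays of the normal fan are precisely $\{\mathbf{v}_k,\mathbf{w}_k\}_{k=1}^n$, matching the rays of $\Sigma_T$.

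The two fans are thus both complete simplicial fans on the same $2n$ rays with $2^n$ maximal cones; by Lemma~\ref{lemm:fan}, the maximal cones of $\Sigma_T$ are the $n$-subsets containing one ray from each pair $\{\mathbf{v}_k,\mathbf{w}_k\}$. The same description applies to the normal fan of the combinatorial cube $\Q_{v,w}$ provided its ``opposite facets'' are paired according to these same pairs --- equivalently, for each $k$ the facets
\[
F_{\mathbf{v}_k} = \Bigl\{z\in\Q_{v,w} : \textstyle\sum_{i=k+1}^{k_\rr}z(i)=\sum_{i=k+1}^{k_\rr}v(i)\Bigr\},\quad F_{\mathbf{w}_k}=\Bigl\{z : \textstyle\sum_{i=k_\ll+1}^{k}z(i)=\sum_{i=k_\ll+1}^{k}w(i)\Bigr\}
\]
must be disjoint. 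The key local computation uses $v(k+1)=w(k)$ (which follows from $v(1)=1$ and $w=vs(1,n)$) to show that the other endpoint $vt_{k_\ll+1,k+1}$ of the $\mathbf{p}_k$-edge at $v$ lies in $F_{\mathbf{w}_k}$:
\[
\sum_{i=k_\ll+1}^{k}\bigl(vt_{k_\ll+1,k+1}\bigr)(i) = v(k+1)+\sum_{i=k_\ll+2}^{k}v(i) = w(k)+\sum_{j=k_\ll+1}^{k-1}w(j) = \sum_{j=k_\ll+1}^{k}w(j),
\]
and a symmetric computation gives $wt_{k,k_\rr}\in F_{\mathbf{v}_k}$; hence the $\mathbf{p}_k$-edge at $v$ correctly toggles between the facets of the purported pair.

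The main obstacle is promoting this local check to the global assertion $F_{\mathbf{v}_k}\cap F_{\mathbf{w}_k}=\emptyset$ for every $k$. My preferred route is induction on $n$ using the root triangle $\{0, k_0, n+1\}$ of $T$: the triangulation decomposes into smaller triangulations $T_L$ and $T_R$, the permutation $u$ factors around $u^{-1}(1) = k_0$ into sub-permutations $u_L, u_R$, and the cube $\Q_{v,w}$ should decompose combinatorially into a product of the smaller Bruhat cubes for $u_L, u_R$ together with an extra ``$\C P^1$-direction'' coming from the root triangle; pairs for $k \neq k_0$ then reduce to the inductive hypothesis on $T_L$ or $T_R$, while the pair $\{\mathbf{v}_{k_0},\mathbf{w}_{k_0}\} = \{\mathbf{v}_{k_0},-\mathbf{v}_{k_0}\}$ (by Lemma~\ref{lemm:fano_condition}) is antipodal and automatically gives opposite facets. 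A direct alternative combines the two facet equalities with the Bruhat partial-sum inequalities $\sum_{i=1}^{j}v(i)\le\sum_{i=1}^{j}z(i)\le\sum_{i=1}^{j}w(i)$ and the identity $w(i)=v(i+1)$ for $i\le n$, handling the boundary cases $k_\ll=0$ and $k_\rr=n+1$ separately.
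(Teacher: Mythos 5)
Your first half (rays of the normal fan are $\mathbf{v}_k,\mathbf{w}_k$, via Proposition~\ref{prop_u_head_atoms_and_lTrT} and Proposition~\ref{prop_dual}) matches the paper, and your ``local check'' that the atom $vt_{k_\ll+1,k+1}$ lies on $F_{\mathbf{w}_k}$ (using $w(i)=v(i+1)$) is correct. However, the proof as written stops short: you flag the pairing $F_{\mathbf{v}_k}\cap F_{\mathbf{w}_k}=\emptyset$ as an unresolved ``main obstacle'' and merely sketch two possible routes (induction on the root triangle, or partial-sum inequalities) without carrying either out. As submitted, that is a genuine gap. What you seem not to have noticed is that your local check already suffices and no further argument is needed: since $\Q_{v,w}$ is a combinatorial $n$-cube whose $2n$ facets split into the $n$ through $v$ (the $F_{\mathbf{v}_j}$) and the $n$ through $w$ (the $F_{\mathbf{w}_\ell}$), the atom $a_k=vt_{k_\ll+1,k+1}$ lies on exactly $n$ facets, of which $n-1$ are the $F_{\mathbf{v}_j}$ ($j\ne k$) containing the edge $\{v,a_k\}$, and the single remaining one --- automatically the opposite facet of $F_{\mathbf{v}_k}$ by cube combinatorics --- is some $F_{\mathbf{w}_\ell}$. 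Your computation shows $a_k\in F_{\mathbf{w}_k}$, so $\ell=k$ and the pairing is established; the symmetric coatom computation is redundant.

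For comparison, the paper reaches the same pairing by a different route: it proves directly the Bruhat incomparability $\uh{u}t_{\ast,j}\not\le\uh{u}s(1,n)t_{j-1,\ast}$ by comparing the first $j-1$ entries of the one-line notations (again using $w(i)=v(i+1)$), and then invokes the fact that facets of a Bruhat interval polytope are themselves Bruhat interval polytopes, so disjointness of the two facets reduces to this incomparability. Both arguments hinge on the same cyclic-shift identity, but yours tests a coordinate-sum equality on the polytope while the paper tests a Bruhat-order inequality; yours is at least as short once you drop the unnecessary worry about a ``global'' step. To be a complete proof, you should replace the two unfinished sketches with the cube-combinatorics argument above.
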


\begin{proof}
	As remarked before, $\Q_{\uh{u},\uh{u}s(1,n)}$ is {combinatorially equivalent to} an $n$-cube. The vertices of $\Q_{\uh{u},\uh{u}s(1,n)}$ correspond to the elements in the Bruhat interval $[\uh{u},\uh{u}s(1,n)]$ and the $n$ edges emanating from the vertex $\uh{u}$ (resp. $\uh{u}s(1,n)$) of $\Q_{\uh{u},\uh{u}s(1,n)}$ correspond to the atoms (resp. coatoms) of $[\uh{u},\uh{u}s(1,n)]$. Therefore, it follows from Proposition~\ref{prop_u_head_atoms_and_lTrT} and the definition of a Bruhat interval polytope that the primitive edge vectors emanating from the vertex $\uh{u}$ (resp. $\uh{u}s(1,n)$) are given by $\mathbf{p}_1,\dots,\mathbf{p}_n$ (resp. $\mathbf{q}_1,\dots,\mathbf{q}_n$) defined in Section~\ref{sec_left_and_right_trees}, where the triangulation $T$ of~$\polygon{n+2}$ is~$\psi(u)$. Their dual vectors are the vectors $\mathbf{v}_1,\dots,\mathbf{v}_n$ (resp. $\mathbf{w}_1,\dots,\mathbf{w}_n$) by Proposition~\ref{prop_dual}, so these vectors are ray generators of the normal fan of $\Q_{\uh{u},\uh{u}s(1,n)}$ {which is combinatorially equivalent to an $n$-cube}. To complete the proof of the theorem, we need to see which pairs of facets of $\Q_{\uh{u},\uh{u}s(1,n)}$ are in an opposite position. 
	
	The $n$ atoms and coatoms of the Bruhat interval $[\uh{u}, \uh{u} s(1,n)]$ are 
	\[
	\begin{split}
	\text{atoms: } & \uh{u} t_{\ast ,2}, \uh{u} t_{ \ast ,3},\dots, \uh{u} t_{ \ast ,n+1},\\
	\text{coatoms: } & \uh{u} s(1,n) t_{1, \ast}, \uh{u} s(1,n) t_{2,\ast} ,\dots, \uh{u} s(1,n) t_{n,\ast},
	\end{split}
	\]
	where $\ast$ means an appropriate number. We claim that
	\begin{equation}\label{eq_vt_and_wt}
	\uh{u} t_{\ast,j} \nleq \uh{u} s(1,n) t_{j-1,\ast} \quad \text{ for }j=2,\dots,n+1.
	\end{equation}
	We may assume that $\uh{u} t_{\ast,j} = \uh{u} t_{i,j}$ and $\uh{u} s(1,n) t_{j-1,\ast} = \uh{u} s(1,n) t_{j-1,k}$ for appropriate \linebreak $1 \leq i < j \leq n+1$ and $1 \leq j-1 < k \leq n+1$.
	We compare the first $j-1$ entries of $\uh{u} t_{i,j}$ and $\uh{u} s(1,n) t_{j-1,k}$:
	\[
	\arraycolsep = 1.4pt
	\begin{array}{rccccccccl}
	\uh{u} t_{i,j} : & \uh{u}(1),&\uh{u}(2),&\dots,&\uh{u}(i-1),&\uh{u}(j),&\uh{u}(i+1),&\dots,&\uh{u}(j-2)&\uh{u}(j-1),\ldots\\
	\uh{u} s(1,n) t_{j-1,k}: & \uh{u}(2),&\uh{u}(3),&\dots,&\uh{u}(i),&\uh{u}(i+1),&\uh{u}(i+2),&\dots,&\uh{u}(j-1)&\uh{u}(k+1),\ldots
	\end{array}
	\]
	Here, we have that $\uh{u}(i) < \uh{u}(j)$ since $\uh{u} t_{i,j} > \uh{u}$. Also, we get
	\[
	\uh{u}(j) = \uh{u} s(1,n)(j-1) > \uh{u} s(1,n)(k)=\uh{u}(k+1)
	\] 
	since $\uh{u}s(1,n) t_{j-1,k} < \uh{u} s(1,n)$. Therefore, we have that
	\[
	\{\uh{u}(1), \uh{u}(j) \}\!\!\uparrow~ \nleq \{\uh{u}(i), \uh{u}(k+1)\}\!\!\uparrow.
	\]
	This proves the claim \eqref{eq_vt_and_wt} together with~\eqref{eq_Bruhat_order}.
	
	The claim~\eqref{eq_vt_and_wt} implies that the set of primitive collections of the normal fan of $\Q_{\uh{u}, \uh{u} s(1,n)}$ is $\{\{\mathbf{v}_k,\mathbf{w}_k\} \mid k \in [n]\}$. This together with Lemma~\ref{lemm:fan} proves the theorem. 
\end{proof}

As for the latter case $(v,w)=(\ut{u},\ut{u}s(n,1))$, we have the following. 

\begin{theorem} \label{theo:normal_fan_s(n,1)}
	For $u\in \mathfrak{S}_n$, the normal fan of the Bruhat interval polytope $\Q_{\ut{u},\ut{u}s(n,1)}$ is isomorphic to the fan $\Sigma_{\psi(w_0uw_0)}$ associated with the triangulation $\psi(w_0uw_0)$ of $\polygon{n+2}$, where $w_0$ denotes the longest element in $\mathfrak{S}_n$. 
\end{theorem}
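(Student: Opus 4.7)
The plan is to reduce Theorem~\ref{theo:normal_fan_s(n,1)} to the already-established Theorem~\ref{theo:normal_fan_s(1,n)} via the involution $x\mapsto W_0 x W_0$ of $\mathfrak{S}_{n+1}$, where $W_0$ denotes the longest element in $\mathfrak{S}_{n+1}$. This conjugation is a Coxeter automorphism sending $s_i$ to $s_{n+1-i}$, so it preserves length and the Bruhat order, and in particular it interchanges $s(1,n)=s_1 s_2\cdots s_n$ with $s(n,1)=s_n s_{n-1}\cdots s_1$. Writing $u'=w_0 u w_0$ for $w_0$ the longest element in $\mathfrak{S}_n$, the goal is to identify the interval $[\ut{u},\ut{u}s(n,1)]$ as the $W_0$-conjugate of $[\uh{u'},\uh{u'}s(1,n)]$.

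The first step is the verification of the conjugation identities
\[
\ut{u}=W_0\,\uh{u'}\,W_0,\qquad \ut{u}\,s(n,1)=W_0\bigl(\uh{u'}\,s(1,n)\bigr)W_0,
\]
which is a direct one-line computation using $(W_0 z W_0)(i)=n+2-z(n+2-i)$: the entry $1$ in position $1$ of $\uh{u'}$ is sent to an $n+1$ in position $n+1$, while for $1\le i\le n$ the other entries become $n+1-u'(n+1-i)=u(i)$, producing precisely $\ut{u}$; combining this with $W_0 s(1,n) W_0=s(n,1)$ yields the second identity.

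Next, since conjugation by $W_0$ is an order-automorphism of $\mathfrak{S}_{n+1}$, it restricts to a bijection of Bruhat intervals $[\uh{u'},\uh{u'}s(1,n)]\to[\ut{u},\ut{u}s(n,1)]$. Embedding permutations in $\R^{n+1}$ by one-line notation, this bijection is the restriction of the affine involution
\[
T\colon (y_1,\ldots,y_{n+1})\longmapsto(n+2-y_{n+1},\,n+2-y_n,\,\ldots,\,n+2-y_1),
\]
which preserves the affine hyperplane $\bigl\{\sum x_i=\binom{n+2}{2}\bigr\}$ containing the two Bruhat interval polytopes. Hence $T$ carries $\Q_{\uh{u'},\uh{u'}s(1,n)}$ bijectively onto $\Q_{\ut{u},\ut{u}s(n,1)}$. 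The linear part of $T$ is a signed permutation matrix that preserves the sublattice $M\subset\Z^{n+1}$, hence is a lattice automorphism; dually, it induces a lattice automorphism of $N=\Z^{n+1}/\Z(1,\ldots,1)$ (explicitly $\varpi_i\mapsto \varpi_{n+1-i}$). Under this lattice automorphism the normal fan of $\Q_{\uh{u'},\uh{u'}s(1,n)}$ is carried to the normal fan of $\Q_{\ut{u},\ut{u}s(n,1)}$, so the two fans are isomorphic. Finally, applying Theorem~\ref{theo:normal_fan_s(1,n)} to the permutation $u'=w_0 u w_0$ identifies the former with $\Sigma_{\psi(w_0 u w_0)}$, giving the desired conclusion.

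The main obstacle is the bookkeeping in the first step: once the identities $\ut{u}=W_0\,\uh{w_0 u w_0}\,W_0$ and the compatibility $W_0 s(1,n) W_0=s(n,1)$ are established, the rest is a formal consequence of the fact that $W_0$-conjugation realizes as a lattice isomorphism of the ambient space of the Bruhat interval polytopes, so normal fans are transported to isomorphic normal fans. No further structural input about the polytopes themselves is needed beyond what is already encoded in Theorem~\ref{theo:normal_fan_s(1,n)}.
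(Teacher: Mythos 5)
Your proof is correct and follows essentially the same route as the paper: conjugate by the longest element $W_0$ of $\mathfrak{S}_{n+1}$, use the induced affine involution $(x_1,\dots,x_{n+1})\mapsto(n+2-x_{n+1},\dots,n+2-x_1)$ to identify $\Q_{\ut{u},\ut{u}s(n,1)}$ with $\Q_{\uh{w_0uw_0},\uh{w_0uw_0}s(1,n)}$ as lattice polytopes, and then invoke Theorem~\ref{theo:normal_fan_s(1,n)}. Your verification of $\ut{u}=W_0\,\uh{w_0uw_0}\,W_0$ fills in a computation the paper leaves to the reader, but the structure of the argument is identical.
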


\begin{proof}
	First, we note that $\Q_{v,w}$ and $\Q_{w_0vw_0,w_0ww_0}$ are isomorphic {as lattice polytopes} for any $v,w\in\Sn$ with $v\le w$, where the same notation $w_0$ as above is used for the longest element in~$\Sn$. Indeed, the isomorphism is given by the linear automorphism of $\R^{n+1}$ defined by 
	\[
	(x_1,\dots,x_{n+1})\to (n+2-x_{n+1},\dots,n+2-x_1)
	\]
	because $(w_0zw_0)(i)=n+2-z(n+2-i)$ for any $z\in\Sn$ and $i=1,\dots,n+1$. Therefore, the normal fan of $\Q_{v,w}$ is isomorphic to that of $\Q_{w_0vw_0,w_0ww_0}$. 
	
	On the other hand, since $w_0\ut{u}w_0=\uh{(w_0uw_0)}$ as is easily checked, we have 
	\[
	(w_0\ut{u}w_0,w_0\ut{u}s(n,1)w_0)=(\uh{(w_0uw_0)},\uh{(w_0uw_0)}s(1,n)).
	\]
	This together with Theorem~\ref{theo:normal_fan_s(1,n)} implies the theorem. 
\end{proof}

\begin{remark}
	For $u\in\mathfrak{S}_n$, binary trees $\psi(u)$ and $\psi(uw_0)$ are isomorphic as unordered rooted trees. This is because since $(uw_0)(i)=u(n+1-i)$ for $i=1.\dots,n$, $\psi(uw_0)$ is the binary tree obtained by reflecting $\psi(u)$ about the vertical line passing through the root. This operation corresponds to the reflection of~$\polygon{n+2}$ through the line which cuts the edge joining the vertices $0$ and $n+1$ vertically. For example, this vertical line for $\polygon{10}$ and the reflection are given in Figure~\ref{fig_involution_on_T_and_B}. Therefore, the fan~$\Sigma_{\psi(w_0uw_0)}$ in Theorem~\ref{theo:normal_fan_s(n,1)} is isomorphic to the fan $\Sigma_{\psi(w_0u)}$ associated with $\psi(w_0u)$. However, $\psi(u)$ and $\psi(w_0u)$ are not isomorphic as unordered rooted trees in general, so the fans $\Sigma_{\psi(u)}$ and $\Sigma_{\psi(w_0u{w_0})}$ are not isomorphic in general. 
\end{remark}
\begin{figure}[hbtp]
	\begin{subfigure}{0.45\textwidth}
		\begin{center}
			\begin{tikzpicture}[scale=0.5]
			\foreach \x in {2,...,11}{
				\coordinate (\x) at (3*36-\x*36:3cm) ; 
			} 
			
			
			
			\draw[thick, dashed] (10)--(11) ; 
			\draw (11)--(8) ; 
			\draw
			(11) edge (2)
			(11) edge (3)
			(3) edge (4)
			(5) edge (6)
			(8) edge (9)
			(4) edge (5)
			(4) edge (7);
			\draw (10) -- (8) ; 
			\draw
			(10) edge (9)
			(8) edge (7)
			(8) edge (4)
			(8) edge (3)
			(7) edge (6)
			(7) edge (5)
			(3) edge (2) ;
			
			\draw[thick, red] (90:4cm) -- (-90:4cm);
			
			\end{tikzpicture} \hspace{0.5cm}
			\begin{tikzpicture}[every node/.style = {circle, fill=black, inner sep = 1.5pt, },
			level distance=5mm, 
			level 1/.style={sibling distance=13mm}, 
			level 2/.style={sibling distance=7mm},
			level 3/.style={sibling distance=5mm},
			scale=1.2 
			]
			\node[black, draw, circle, inner sep = 1.5 pt, fill=black, double]{}
			child{
				node{}
			}
			child{
				node{}
				child{
					node{}
					child{
						node{}
						child[missing] {}
						child{
							node{}
							child{
								node{}
							}
							child[missing] {}
						}
					}
					child[missing] {}
				}
				child{
					node{}
				}
			};
			
			\end{tikzpicture}
		\end{center}
		\caption{$T$ and $\B$}\label{fig_T_and_B_example_1}
	\end{subfigure}
	\begin{subfigure}{0.45\textwidth}
		\begin{center}
			\begin{tikzpicture}[scale=0.5]
			\foreach \x in {2,...,11}{
				\coordinate (\x) at (-8*36+\x*36:3cm) ; 
			} 
			\draw (11)--(8) ;
			\draw
			(11) edge (2)
			(11) edge (3)
			(3) edge (4)
			(5) edge (6)
			(8) edge (9)
			(4) edge (5)
			(4) edge (7);

			\draw[thick, dashed] (10)--(11) ; 
			\draw (10) -- (8) ;
			\draw
			(10) edge (9)
			(8) edge (7)
			(8) edge (4)
			(8) edge (3)
			(7) edge (6)
			(7) edge (5)
			(3) edge (2) ;
			\draw[thick, red] (90:4cm) -- (-90:4cm);
			\end{tikzpicture} \hspace{0.5cm}
			\begin{tikzpicture}[every node/.style = {circle, fill=black, inner sep = 1.5pt, },
			level distance=5mm, 
			level 1/.style={sibling distance=13mm}, 
			level 2/.style={sibling distance=7mm},
			level 3/.style={sibling distance=5mm},
			scale=1.2 
			]
			\node[black, draw, circle, inner sep = 1.5 pt, fill=black, double]{}
			child{
				node{}
				child{
					node{}
				}
				child{
					node{}
					child[missing] {}
					child{
						node{} 
						child{
							node{}
							child[missing] {}
							child{
								node{}
							}
						}
						child[missing] {}
					}
				}
			}
			child{
				node{}
			}; 
			
			\end{tikzpicture}
		\end{center}
		\caption{{Reflection image of $T$ and $\B$}}\label{fig_T_and_B_example_2}
	\end{subfigure}
	\caption{{Reflection of} a triangulated polygon and a binary tree.}\label{fig_involution_on_T_and_B}
\end{figure}
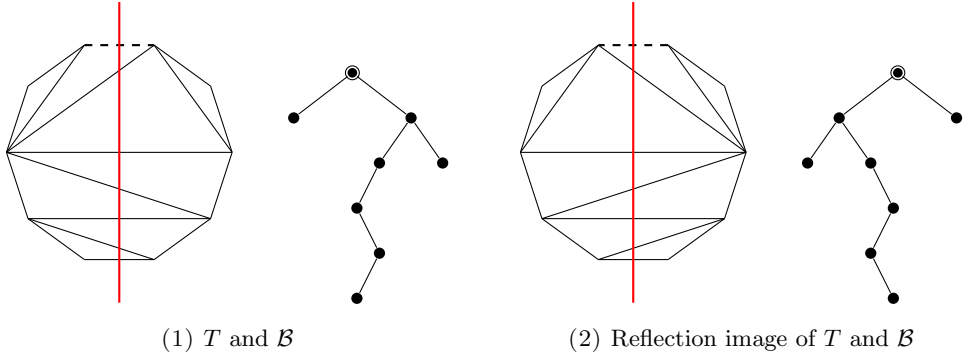

\begin{remark}
For each triangulation $T$ of $\polygon{n+2}$, there exists a Bruhat interval polytope whose normal fan is the fan $\Sigma_{T}$ by the surjectivity of $\psi$ and 
Theorems~\ref{theo:normal_fan_s(1,n)} and~\ref{theo:normal_fan_s(n,1)}.
On the other hand, for $n \leq 4$, whenever the normal fan of a Bruhat interval polytope $\Q_{v,w}$ is of Catalan type, the pair $(v,w)$ is of the form $(\uh{u}, \uh{u} s(1,n))$ or $(\ut{u}, \ut{u} s(n,1))$. 
We do not know whether this is true for any $n$.
\end{remark}

\begin{remark}
	One may wonder whether the normal fan of a Bruhat interval polytope $\Q_{v,w}$ is of Catalan type whenever $\Q_{v,w}$ is {combinatorially equivalent to} a cube. However, this is not true. For example, when $v$ is the identity element and $w=s_3s_1s_2=2413$, the Bruhat interval polytope~$\Q_{v,w}$ is {combinatorially equivalent to} a $3$-cube and its normal fan has ray generators 
	\[
	\begin{array}{lll}
	\mathbf{v}_1 = \varpi_1, &\quad \mathbf{v}_2 = \varpi_2, &\quad \mathbf{v}_3 = \varpi_3, \\
	\mathbf{w}_1 = -\varpi_1, &\quad \mathbf{w}_2 = \varpi_1-\varpi_2+\varpi_3,&\quad \mathbf{w}_3 = -\varpi_3.
	\end{array}
	\]
	Since $\mathbf{v}_2 + \mathbf{w}_2 = \mathbf{v}_1 +\mathbf{v}_3$, the primitive relations do not satisfy even the Fano condition, so it is not of Catalan type. 
\end{remark}


\section{Smooth toric Richardson varieties of Catalan 
type}\label{sec_smooth_toric_Richadson}

In this section, we interpret the results obtained in the previous sections in terms of Richardson varieties.
We first recall the definition of Richardson varieties and their properties. Let $G = \GL_{n+1}(\C)$, $B \subset G$ the {subgroup consisting} of upper triangular matrices, and $\Tb \subset B$ the {subgroup consisting} of diagonal matrices. The homogeneous space $G/B$ is called the \defi{flag variety} which can be identified with
\[
\flag_{n+1} \coloneqq \{ (\{0\} \subsetneq V_1 \subsetneq V_2 \subsetneq \cdots \subsetneq V_{n+1} = \C^{n+1}) \mid \dim_{\C} V_i = i \quad \text{ for }i = 1,\dots,n+1 \}.
\]
The left multiplication by $\Tb$ on $G$ induces the $\Tb$-action on $G/B$. 
The set of $\Tb$-fixed points in $G/B$ bijectively corresponds to the symmetric group $\Sn$ on $[n+1]$.
Indeed, for each $w \in \Sn$, we have 
\[
e_w \coloneqq 
(\{0\} \subsetneq \langle e_{w(1)}\rangle \subsetneq \langle e_{w(1)},e_{w(2)}\rangle\subsetneq \cdots\subsetneq \C^{n+1}) \in (\flag_{n+1})^\Tb
\]
which is called a \defi{coordinate flag}. Here, $e_1,\dots,e_{n+1}$ are the standard basis vectors of~$\C^{n+1}$. There is a moment map 
\begin{equation} \label{eq:moment_map}
\mu\colon G/B\to \R^{n+1}
\end{equation}
which sends the coordinate flag $e_w$ to the point $(w^{-1}(1),\dots,w^{-1}(n+1))$ in $\R^{n+1}$ (see \cite[\S 3]{LMP2021}). 

For an element $w \in \Sn$, we denote by~$X_w$ the \defi{Schubert variety} $\overline{BwB/B}$ in the flag variety~$G/B$. For a pair $(v,w)$ of elements in $\Sn$ with $v \leq w$, the \defi{Richardson variety}
$\Xvw{v}{w}$ is defined by the intersection $X_w \cap w_0X_{w_0 v}$. 
For example, $\Xvw{e}{w} = X_w$ and $\Xvw{e}{w_0} = X_{w_0} = G/B$, where $e$ denotes the identity element in $\Sn$.
Each Richardson variety is a $\Tb$-invariant irreducible variety and
\begin{equation} \label{eq:Xvw_dimension}
\dim_{\C} \Xvw{v}{w} = \ell(w) - \ell(v).
\end{equation}

Through the correspondence $e_w\to w$, the set of $\Tb$-fixed points in $\Xvw{v}{w}$ corresponds to the Bruhat interval~$[v,w] =  \{ z \in \Sn \mid v \leq z \leq w \}$ (cf.~\cite[\S 10.5]{Fulton97Young}).
Therefore, by the convexity theorem due to Atiyah~\cite{Atiyah82} (or Guillemin--Sternberg~\cite{GuilleminSternberg82}), we have 
\begin{equation}\label{equation_moment_map_image_of_Xvw}
\mu(\Xvw{v}{w})= \text{Conv}\{\mu(z) \mid v \le z \le w\} = \Q_{v^{-1},w^{-1}}
\end{equation}
because $\mu(e_z)=(z^{-1}(1),\dots,z^{-1}(n+1))$ for any $z\in\Sn$. 
Recall from~\cite[Proposition~7.12]{TsukermanWilliams} that the $X^v_w$ is a toric variety with respect to the $\Tb$-action if and only if 
\begin{equation*} \label{eq:Qvw_dimension2}
\dim_{\R} \Q_{v^{-1},w^{-1}} = \ell(w^{-1})-\ell(v^{-1})=\ell(w)-\ell(v).
\end{equation*}
Therefore, when $\Xvw{v}{w}$ is toric, its fan is the normal fan of $\Q_{v^{-1},w^{-1}}$. 

Every toric Schubert variety is smooth but not all toric Richardson varieties are smooth (see~\cite{LMP2021}). Smooth toric Richardson varieties are characterized in term of their moment polytopes as follows. 

\begin{proposition}[{\cite[Proposition~1.2, Theorem~1.3, Corollary~5.8]{LMP2021}}] \label{prop_review_on_toric_BIP}
	A Richardson variety $X_w^v$ is toric and smooth if and only if $\Q_{v^{-1},w^{-1}}$ is combinatorially equivalent to a cube \textup{(}equivalently, $\Q_{v,w}$ is combinatorially equivalent to a cube\textup{)}. 
\end{proposition}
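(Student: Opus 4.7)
The plan is to pass through the moment-map dictionary: by the convexity theorem $\mu(X^v_w)=\Q_{v^{-1},w^{-1}}$, and $X^v_w$ is toric exactly when these two objects have matching real dimension $\ell(w)-\ell(v)$, in which case the fan of $X^v_w$ is the inner normal fan of $\Q_{v^{-1},w^{-1}}$. Smoothness of $X^v_w$ then translates into non-singularity of this fan; equivalently, $\Q_{v^{-1},w^{-1}}$ is a simple lattice polytope whose primitive edge vectors at each vertex form a $\Z$-basis of the character lattice. The task is then to pin down when such a polytope is combinatorially a cube.

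I would first address the parenthetical equivalence. The map $z\mapsto z^{-1}$ is a poset isomorphism of $(\Sn,\le)$ preserving length and covering relations, so it induces a bijection $[v,w]\to[v^{-1},w^{-1}]$ that carries covers to covers. Combined with the intrinsic description of the face lattice of a Bruhat interval polytope in~\cite{TsukermanWilliams} (which depends only on the poset data of $[v,w]$ and its sub-intervals), this yields a combinatorial equivalence between $\Q_{v,w}$ and $\Q_{v^{-1},w^{-1}}$, so one is a cube exactly when the other is.

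For the direction $(\Leftarrow)$, suppose $\Q_{v^{-1},w^{-1}}$ is combinatorially an $n$-cube. Then its dimension is $n=\ell(w)-\ell(v)$, so $X^v_w$ is toric. The vertex $\mu(e_{v^{-1}})$ has exactly $n$ incident edges, whose primitive edge vectors are roots $e_a-e_b$ indexed by covering pairs $v\lessdot vt_{a,b}$ in $[v,w]$; analogous statements hold at every other vertex by the cube structure. Smoothness of the maximal cone of the normal fan dual to $\mu(e_{v^{-1}})$ reduces to the $\Z$-basis property for these $n$ roots, which can be verified directly from the type-$A$ root data (the calculation carried out in Section~\ref{sec:example} gives the flavor). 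The argument propagates to every vertex because the cube structure makes every local picture equivalent up to relabeling.

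For $(\Rightarrow)$, smoothness forces $\Q_{v^{-1},w^{-1}}$ to be simple of dimension $n$, so every vertex has exactly $n$ incident edges and the corresponding primitive edge vectors form a $\Z$-basis. The main obstacle is upgrading simplicity to the cube condition, since most simple $n$-polytopes are not cubes. The plan is to exploit the specific structure of Bruhat intervals: the bottom vertex $v^{-1}$ has exactly $n$ atoms above it, and the top vertex $w^{-1}$ has exactly $n$ coatoms below it. One then shows inductively on length that each edge emanating from $v^{-1}$ lies in an $(n-1)$-dimensional face which is itself a Bruhat interval polytope and, by induction, a cube; these $n$ sub-cubes must then fit together in the combinatorial pattern of the facets of an $n$-cube. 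This inductive assembly---matching atoms at $v^{-1}$ with coatoms at $w^{-1}$ via the lifting property in Bruhat order---is the delicate step where the specific combinatorics of Bruhat intervals is doing real work, and is where I expect the bulk of the proof to lie.
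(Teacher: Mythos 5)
The paper does not prove this proposition; it is stated as a recollection and explicitly cited from~\cite[Proposition~1.2, Theorem~1.3, Corollary~5.8]{LMP2021}. So there is no in-paper proof against which to compare your argument, and what you are really attempting is a reconstruction of the proof in~\cite{LMP2021}.

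With that understood, your argument has a genuine gap in the direction $(\Rightarrow)$, which is the substance of the result. You reduce to the statement that a simple toric Bruhat interval polytope must be combinatorially a cube, and you propose an induction whose inductive step is: each facet is a Bruhat interval polytope (true, by Tsukerman--Williams), each facet of a simple polytope is simple (true), hence each facet is a cube by induction, and therefore ``these $n$ sub-cubes must fit together in the combinatorial pattern of the facets of an $n$-cube.'' This last step is not a proof. A simple $n$-polytope all of whose facets are combinatorial $(n-1)$-cubes is not automatically an $n$-cube without further structure, and you do not supply any argument for why the Bruhat-interval combinatorics forces the global assembly. Indeed you acknowledge this yourself (``where I expect the bulk of the proof to lie''), but that means the proposal does not actually close the argument; the real work in~\cite{LMP2021} lies exactly here.

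Two smaller issues. First, for the parenthetical equivalence you invoke the poset isomorphism $z \mapsto z^{-1}$ together with ``the intrinsic description of the face lattice... which depends only on the poset data of $[v,w]$.'' It is not obvious that the face lattice of $\Q_{v,w}$ is determined by the abstract poset $[v,w]$; the Tsukerman--Williams description of faces involves the ambient $W$-structure (e.g.\ which $\Q_{x,y}$ for $[x,y]\subset[v,w]$ actually occur as faces), so this step needs to be justified rather than asserted. Second, in $(\Leftarrow)$ you wave at ``verified directly from the type-$A$ root data''; what you actually need, and should say, is that in type $A$ any $n$ linearly independent roots form a $\Z$-basis of the root lattice (equivalently, the incidence matrix of a spanning tree is unimodular), which is what makes the cube-vertex cones smooth. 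That part of your outline is correct once made explicit, but as written it reads as a gesture rather than an argument.
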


We say that a smooth toric Richardson variety $\Xvw{v}{w}$ is \emph{of Catalan type} if it is of Catalan type as a toric variety, in other words, if the normal fan of $\Q_{v^{-1},w^{-1}}$ is of Catalan type. Since any \linebreak $n$-dimensional fan of Catalan type is realized as the normal fan of $\Q_{v^{-1},w^{-1}}$ with \linebreak $(v^{-1},w^{-1})=(\uh{u},\uh{u}s(1,n))$ for some $u\in \mathfrak{S}_n$ by Theorem~\ref{theo:normal_fan_s(1,n)}, we obtain the following as a direct consequence of Corollary~\ref{cor_enumerate}. 

\begin{corollary}\label{cor_enumeration_Xvw}
	The number of isomorphism classes of $n$-dimensional smooth toric Richardson varieties of Catalan type is the Wedderburn--Etherington number $b_{n+1}$. 
\end{corollary}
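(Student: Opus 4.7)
The plan is to derive this corollary as an almost immediate consequence of Corollary~\ref{cor_enumerate} together with Theorem~\ref{theo:normal_fan_s(1,n)} and Proposition~\ref{prop_review_on_toric_BIP}. The strategy is to show that the map sending a smooth toric Richardson variety of Catalan type to its underlying toric variety (equivalently, to the isomorphism class of its fan) induces a bijection between the two sets being counted.

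First I would establish surjectivity onto the set of isomorphism classes of $n$-dimensional toric varieties of Catalan type. Given any triangulation $T$ of $\polygon{n+2}$, the surjectivity of $\psi\colon \mathfrak{S}_n\twoheadrightarrow\{\text{triangulations of }\polygon{n+2}\}$ (Proposition~\ref{prop_psi}) yields $u\in\mathfrak{S}_n$ with $\psi(u)=T$. Setting $(v,w)=(\uh{u}^{-1},(\uh{u}s(1,n))^{-1})$ gives a pair with $v\le w$ and $\ell(w)-\ell(v)=n$, so by \eqref{eq:Xvw_dimension} the Richardson variety $\Xvw{v}{w}$ has complex dimension $n$. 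Its moment image is $\Q_{v^{-1},w^{-1}}=\Q_{\uh{u},\uh{u}s(1,n)}$ by \eqref{equation_moment_map_image_of_Xvw}, which is combinatorially a cube, so by Proposition~\ref{prop_review_on_toric_BIP} $\Xvw{v}{w}$ is a smooth toric variety. Its fan is the normal fan of $\Q_{\uh{u},\uh{u}s(1,n)}$, which is $\Sigma_T$ by Theorem~\ref{theo:normal_fan_s(1,n)}. Hence $X(\Sigma_T)\cong \Xvw{v}{w}$ as toric varieties, realizing every toric variety of Catalan type as a toric Richardson variety.

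Next I would address injectivity at the level of isomorphism classes. By the definition of ``of Catalan type'' for a Richardson variety, the fan of any smooth toric Richardson variety of Catalan type is some $\Sigma_{T'}$, and two such varieties are isomorphic as toric varieties exactly when their fans are isomorphic. By Theorem~\ref{theo:fan_and_toric_of_Catalan}, this happens iff the associated binary trees $\B_T$ and $\B_{T'}$ are isomorphic as unordered rooted trees. Combining surjectivity with this, the isomorphism-class-counting map to $n$-dimensional toric varieties of Catalan type is a bijection, and Corollary~\ref{cor_enumerate} provides the count $b_{n+1}$.

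The only real subtlety is agreeing on what ``isomorphism'' means for toric Richardson varieties: interpreting it as isomorphism of the underlying toric varieties (equivalently, isomorphism of fans) makes the argument transparent. With that convention, essentially no new calculation is required — the bulk of the content has been carried out in Theorems~\ref{theo:fan_and_toric_of_Catalan} and~\ref{theo:normal_fan_s(1,n)} and Proposition~\ref{prop_review_on_toric_BIP}, and the corollary is a two-line consequence citing these results.
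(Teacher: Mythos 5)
Your proposal is correct and follows the same route as the paper: the paper deduces this corollary directly from Theorem~\ref{theo:normal_fan_s(1,n)} (every fan of Catalan type is realized as the fan of some $\Xvw{\uh{u}^{-1}}{(\uh{u}s(1,n))^{-1}}$, using the surjectivity of $\psi$) together with Corollary~\ref{cor_enumerate}, and the paper's definition of ``of Catalan type'' for Richardson varieties already identifies the notion of isomorphism with that of the underlying toric varieties, exactly as you note. You have simply spelled out the intermediate steps (Proposition~\ref{prop_psi}, Proposition~\ref{prop_review_on_toric_BIP}, Theorem~\ref{theo:fan_and_toric_of_Catalan}) that the paper leaves implicit in its one-line justification.
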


We shall consider a wider family of smooth toric Richardson varieties. Since $\mu(\Xvw{v}{w})=\Q_{v^{-1},w^{-1}}$, we shall use $\Xvw{v^{-1}}{w^{-1}}$ instead of $\Xvw{v}{w}$ so that $\mu(\Xvw{v^{-1}}{w^{-1}})=\Q_{v,w}$ and we can apply the results in the previous sections or in  \cite{LMP2021} directly. 

It is known that $\Q_{e,w}$ is toric if and only if $w$ is a product of distinct simple transpositions.  Thus, it is natural to study a pair $(v,w)$ such that 
\begin{equation}\label{eq_v_w_special_form}
v^{-1}w=s_{j_1} s_{j_2} \cdots s_{j_m} \quad \text{ with } \ell(w) - \ell(v) = m,
\end{equation}
where $j_1,\dots,j_m$ are {mutually} distinct. Note that the pair $(v,w)$ treated in the previous section is a special case where $v^{-1}w=s(1,n)$ or $s(n,1)$. It is shown in \cite[Proposition 7.1]{LMP2021} that $\Q_{v,w}$ for the pair $(v,w)$ in \eqref{eq_v_w_special_form} is toric. However, such $\Q_{v,w}$ is not necessarily combinatorially equivalent to a cube (see \cite[\S 7]{LMP2021}) although it is combinatorially equivalent to a cube when $v=e$. We recall from~\cite[\S 7]{LMP2021} a sufficient condition such that $\Q_{v,w}$ is combinatorially equivalent to a cube. To state it, we set up some notation and terminology. 

For $p$ and $q$ in $[n]$, we set
\[
s(p,q) = \begin{cases}
s_p s_{p+1} \cdots s_q & \text{ when } p \leq q, \\
s_p s_{p-1} \cdots s_q &\text{ when } p \geq q.
\end{cases}
\]
For each $s(p,q)$, we set 
\[
\bar{p}= \min\{p,q\},\quad \bar{q} = \max\{p,q\}.
\]
We note that if $j_1,\dots,j_m \in [n]$ are mutually distinct, then we have a \defi{minimal} expression
\begin{equation}\label{eq_minimal_expression}
s_{j_1} s_{j_2} \cdots s_{j_m} = s(p_1,q_1) s(p_2,q_2) \cdots s(p_r,q_r),
\end{equation}
where the intervals $[\bar{p}_1,\bar{q}_1],\dots,[\bar{p}_r,\bar{q}_r]$ are disjoint and $r$ is the minimum among such expressions. 
We say that the product $s_{j_1}s_{j_2}\cdots s_{j_m}$ in~\eqref{eq_minimal_expression} is \defi{proper} if no two intervals among $[\bar{p}_1,\bar{q}_1],\dots,[\bar{p}_r,\bar{q}_r]$ are adjacent. 
\begin{example} 
	\begin{enumerate}
		\item Suppose that $w = s_1s_2s_1 s_4s_5s_6$ and $v = s_1$. Then $v^{-1}w =s(2,1)s(4,6)$ and it is proper because $[1,2]$ and $[4,6]$ are not adjacent.
		\item Suppose that $w = s_1 s_2s_1 s_3s_4$ and $v = s_1$. Then $v^{-1}w =  s(2,1) s(3,4)$ and it is not proper because $[1,2]$ and $[3,4]$ are adjacent.
	\end{enumerate}
\end{example}

For $1 \leq p < q \leq n$, we define
\[
\pi_{[p,q]} \colon \Sn \to \mathfrak{S}_{q-p+2}
\]
by sending $w = w(1) w(2) \cdots w(n+1) \in \Sn$ to $u = u(1)u(2) \cdots u(q-p+2)$ such that the subsequence $w(p) w(p+1) \cdots w(q+1)$ has the same pattern as $u$. 
Here, we say that two sequences of numbers $a(1) a(2) \cdots a(k)$ and $b(1) b(2) \cdots b(k)$ \textit{have the same pattern} if for all $1 \leq i < j \leq k$, $a(i) < a(j)$ if and only if $b(i) < b(j)$.

\begin{example}\label{example_projection}
	Let $v = 173254689$ and $w = v s(1) s(4,3) s(6,8) = 715326894$. 
	We get $(\bar{p}_1,\bar{q}_1) = (1,1)$, $(\bar{p}_2,\bar{q}_2) = (3,4)$, and $(\bar{p}_3,\bar{q}_3) = (6,8)$. Since $v(1) v(2) = 17$, $v(3) v(4) v(5) = 325$, and $v(6) v(7) v(8) v(9) = 4689$, we have 
	\[
	\pi_{[1,1]}(v) = 12, \quad \pi_{[3,4]}(v) = 213, \quad \pi_{[6,8]}(v) =1234.
	\]
	Similarly, we get 
	\[
	\pi_{[1,1]}(w) = 21, \quad \pi_{[3,4]}(w) = 321,\quad \pi_{[6,8]}(w) = 2341.
	\] 
	Then one can see that 
	\[
	\Q_{v,w} = \Q_{12,21} \times \Q_{213, 321} \times \Q_{1234, 2341},
	\]
	where each factor is of Catalan type (meaning that its normal fan is of Catalan type). 
\end{example}

The observation above holds in general. Indeed, we have the following. 
\begin{proposition}[{\cite[Lemma~6.1 and Proposition~7.3]{LMP2021}}]\label{prop_Qvw_is_product} 
	Suppose that the minimal expression of $s_{j_1} s_{j_2} \cdots s_{j_m}$ in~\eqref{eq_minimal_expression} is proper. Then, for any pair $(v,w)$ such that $v^{-1}w =s_{j_1} s_{j_2} \cdots s_{j_m}$ and $\ell(w) - \ell(v) = m$, the Bruhat interval polytope $\Q_{v,w}$ is combinatorially equivalent to an $m$-cube. More precisely, if we set $(v_i,w_i)=(\pi_{[\bar{p}_i,\bar{q}_i]}(v), \pi_{[\bar{p}_i,\bar{q}_i]}(w))$ for $i=1,\dots,r$, then we have 
	\[
	\Q_{v,w}=\prod_{i=1}^r \Q_{v_i,w_i}\quad\text{and hence}\quad \Xvw{v^{-1}}{w^{-1}} =\prod_{i=1}^r \Xvw{v_i^{-1}}{w_i^{-1}}, 
	\]
	where each factor is of Catalan type. 
\end{proposition}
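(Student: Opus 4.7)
The plan is to prove the statement in three stages: establish a bijection between $[v,w]$ and $\prod_i [v_i, w_i]$, lift this to a product decomposition of the moment polytope $\Q_{v,w}$, and conclude using Theorems~\ref{theo:normal_fan_s(1,n)} and~\ref{theo:normal_fan_s(n,1)}.

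First, I would exploit the properness hypothesis: since $\bar p_{i+1} > \bar q_i + 1$, any two simple transpositions drawn from distinct blocks $[\bar p_i,\bar q_i]$ and $[\bar p_{i'},\bar q_{i'}]$ satisfy $|j - k| \geq 2$ and hence commute, so the factors $s(p_1,q_1),\ldots,s(p_r,q_r)$ pairwise commute and the position ranges $I_i \coloneqq [\bar p_i, \bar q_i + 1]$ are pairwise disjoint. The permutations $v$ and $w$ therefore coincide off $\bigcup_i I_i$. A direct application of the prefix-comparison criterion~\eqref{eq_Bruhat_order} then shows that every $z\in [v,w]$ also coincides with $v$ off $\bigcup_i I_i$ and permutes, on each $I_i$, the value set $A_i \coloneqq \{v(p) : p\in I_i\}$.

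Second, I would define $\Phi\colon [v,w]\to \prod_i [v_i,w_i]$ by $\Phi(z)=(\pi_{[\bar p_i, \bar q_i]}(z))_i$. Injectivity is immediate from the previous step. For surjectivity, given $(z_i)\in \prod_i [v_i, w_i]$, one reassembles $z$ by keeping $v$'s values off $\bigcup_i I_i$ and rearranging the values in $A_i$ according to the pattern of $z_i$ on $I_i$; that the resulting $z$ lies in $[v,w]$ follows again from~\eqref{eq_Bruhat_order}, since the sorted prefixes of $v$, $z$, and $w$ decouple across the disjoint blocks. Taking moment-map images, the coordinates of $\mu(z)=(z(1),\ldots,z(n+1))$ factor as constants (for positions outside $\bigcup_i I_i$) together with an independent block of coordinates for each $I_i$; passing to convex hulls of $\Tb$-fixed point images yields $\Q_{v,w}\cong \prod_i \Q_{v_i, w_i}$ inside the appropriate affine subspace. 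In particular $\dim \Q_{v,w}=\sum_i (|I_i|-1)=m$, so each $\Q_{v_i,w_i}$ has the maximal possible dimension $|I_i|-1=\ell(w_i)-\ell(v_i)$.

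Third, each $v_i^{-1}w_i = s(p_i, q_i)$ equals $s(1, k_i)$ or $s(k_i, 1)$ in $\mathfrak{S}_{k_i+1}$ with $k_i = \bar q_i - \bar p_i + 1$; consequently $v_i = \uh{u_i}$ or $v_i = \ut{u_i}$ for some $u_i\in\mathfrak{S}_{k_i}$, and Theorems~\ref{theo:normal_fan_s(1,n)} and~\ref{theo:normal_fan_s(n,1)} identify the normal fan of $\Q_{v_i, w_i}$ as a fan of Catalan type, so in particular $\Q_{v_i, w_i}$ is combinatorially equivalent to a $k_i$-cube. Thus $\Q_{v,w}$ is combinatorially equivalent to an $m$-cube, and Proposition~\ref{prop_review_on_toric_BIP} implies that $\Xvw{v^{-1}}{w^{-1}}$ is a smooth toric variety whose fan is the normal fan of $\Q_{v,w}$, namely the product of the normal fans of the $\Q_{v_i, w_i}$; this yields the claimed factorization of Richardson varieties with each factor of Catalan type. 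I expect the main obstacle to be the surjectivity of $\Phi$ in the second stage: one must carefully check that the reassembly $z$ satisfies $v \leq z \leq w$, which rests on the fact that across different blocks the prefix inequalities of~\eqref{eq_Bruhat_order} split into the corresponding inequalities for each $z_i \in [v_i, w_i]$ separately, a step where the non-adjacency of the intervals is essential.
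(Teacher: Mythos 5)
Your plan takes a genuinely different route from the paper's. The paper simply cites \cite[Proposition~7.3]{LMP2021} for the combinatorial cube structure, then appeals to the facts that $\Q_{v,w}$ is toric and that faces of toric Bruhat interval polytopes correspond bijectively to Bruhat subintervals, reducing the product decomposition to the purely combinatorial equivalence
$[x,y]\subset[v,w]\iff[\pi_{[\bar p_i,\bar q_i]}(x),\pi_{[\bar p_i,\bar q_i]}(y)]\subset[v_i,w_i]$ for all $i$. You instead construct the bijection $\Phi\colon [v,w]\to\prod_i[v_i,w_i]$ by hand and push it through the moment map, then invoke Theorems~\ref{theo:normal_fan_s(1,n)} and~\ref{theo:normal_fan_s(n,1)} to identify each factor's normal fan. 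Your approach is more self-contained (it avoids the black-box appeals to LMP2021's Theorems~5.1 and~5.7 and to Tsukerman--Williams), at the price of having to verify the prefix-criterion bookkeeping directly; the combinatorial heart of the two arguments (the decoupling of Bruhat comparisons across non-adjacent blocks) is essentially the same, and you correctly flag it as the main obstacle.

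One point you gloss over deserves attention. The coordinate projection of $\Q_{v,w}$ onto the positions $I_i$ is the convex hull of vectors whose entries come from the value set $A_i=\{v(p):p\in I_i\}$, which is in general \emph{not} $\{1,\dots,k_i+1\}$ (e.g.\ in Example~\ref{example_projection}, $A_2=\{2,3,5\}$). So this projection is only a ``monotone stretching'' of $\Q_{v_i,w_i}$, not the polytope itself, and the stretch $j\mapsto b_j$ (with $b_1<\cdots<b_{k_i+1}$ the sorted elements of $A_i$) is not affine. The claim that the normal fans nevertheless coincide is true, but needs an argument: both polytopes are generalized permutohedra with every point of the interval a vertex, and corresponding edges point in the directions $(z_i(a)-z_i(b))(e_a-e_b)$ versus $(b_{z_i(a)}-b_{z_i(b)})(e_a-e_b)$, which are positive multiples of each other since $b$ is increasing; hence the tangent cones at corresponding vertices, and therefore the normal fans, agree. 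Without something like this you only get a bijection of vertex sets, not the polytope (or variety) identification asserted in the statement. You should also note that the identity $\ell(w_i)-\ell(v_i)=k_i$ (needed to put $(v_i,w_i)$ in the form \eqref{eq:pairvw}) follows from subadditivity of length together with $\ell(w)-\ell(v)=m=\sum_i k_i$, forcing equality in each block.
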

\begin{proof}
Recall from~\cite[Proposition~7.3]{LMP2021} that if the minimal expression of $s_{j_1} s_{j_2} \cdots s_{j_m}$ in~\eqref{eq_minimal_expression} is proper, then any pair $(v,w)$ in the statement defines the Bruhat interval polytope $\Q_{v,w}$ which is combinatorially equivalent to an $m$-cube. This proves the former  statement.

Furthermore, the Bruhat interval polytope $\Q_{v,w}$ is toric by~\cite[Theorem~5.7]{LMP2021}, that is, $\dim \Q_{v,w} = \ell(w) - \ell(v)$. 
On the other hand,  by~\cite[Theorem~4.1]{TsukermanWilliams}, every face of a Bruhat interval polytope is itself a Bruhat interval polytope; moreover, a Bruhat interval polytope $\Q_{x,y}$ forms a face of a toric Bruhat interval polytope $\Q_{v,w}$ for any $[x,y] \subset [v,w]$ by~\cite[Theorem~5.1]{LMP2021}. Therefore, the face structure of the toric Bruhat interval~$\Q_{v,w}$ considered in the statement is decided by its Bruhat subintervals. 
Because the minimal expression is proper, we have 
\[
[x,y] \subset [v,w] \iff [\pi_{[\bar{p}_i,\bar{q}_i]}(x), \pi_{[\bar{p}_i,\bar{q}_i]}(y)] \subset [v_i,w_i] \quad \text{ for all }i=1,\dots,r.
\]
This proves the latter statement.
\end{proof}

\begin{example}\label{example_corr_signed_rooted_forests}
	Let $v$ and $w$ be a pair in Example~\ref{example_projection}. Then 
	\[
	\begin{split}
	(v_1,w_1)&=(12,21)=(\uh{(1)},\uh{(1)}s(1,1)),\\
	(v_2,w_2)&=(213,321)=(\ut{(21)},\ut{(21)}s(2,1)),\\
	(v_3,w_3)&=(1234,2341)=(\uh{(123)},\uh{(123)}s(1,3)).
	\end{split} 
	\] 
	Therefore, it follows from Proposition~\ref{prop_Qvw_is_product}, 
	Theorems~\ref{theo:normal_fan_s(1,n)} and~\ref{theo:normal_fan_s(n,1)} that 
	\[
	\Xvw{v^{-1}}{w^{-1}} \cong {X(\Sigma_{\psi(1)}) \times X(\Sigma_{\psi(21)}) \times X(\Sigma_{\psi(123)})}.
	\]
	The binary forest $\psi(1) \sqcup \psi(21) \sqcup \psi(123)$ is shown as follows. 
	
	\medskip
	
	 \begin{center}
	 \begin{tikzpicture}
	 \tikzstyle{vnode} = [circle, fill, inner sep = 1.5pt]
	 \node[black, draw, circle, inner sep = 1.5 pt, fill=black, double] (1) {};
	 \node[black, draw, circle, inner sep = 1.5 pt, fill=black, double, right of= 1] (3) {};
	 \node[vnode, below left of = 3] (4) {};
	 \node[black, draw, circle, inner sep = 1.5 pt, fill=black, double, right of = 3] (6) {};
	 \node[vnode, below right of = 6] (7) {};
	 \node[vnode, below right of = 7] (8) {};
	 
	 \draw (3)--(4);
	 \draw (6)--(7) ;
	 \draw (7)--(8);
	 \end{tikzpicture}
	 \end{center}
	 
\end{example}

Using the sequence $\{b_{n}\}$ of the Wedderburn--Etherington numbers, we can also enumerate the number of isomorphism classes of Richardson varieties $X^{v^{-1}}_{w^{-1}}$ such that $v^{-1}w$ has a proper minimal expression as follows. 

\begin{theorem}
	Let $f_m$ be the number of isomorphism classes of $m$-dimensional smooth toric Richardson varieties $\Xvw{v^{-1}}{w^{-1}}$ with $v, w \in \Sn$ such that $v^{-1}w$ has a proper minimal expression of distinct $m$ simple transpositions, where we assume $n\gg m$. Then 
	\begin{equation}\label{eq_fm}
	\sum_{m \geq 0} f_mx^m = \frac{1}{\prod_{k > 0} (1-x^k)^{b_{k+1}}}.
	\end{equation}
\end{theorem}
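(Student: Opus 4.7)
The plan is to identify $f_m$ with the number of multisets of Catalan-type toric Richardson varieties of total dimension $m$, and then apply the standard multiset generating-function identity together with Corollary~\ref{cor_enumeration_Xvw}.

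For the surjection of the count onto multisets, the forward direction is Proposition~\ref{prop_Qvw_is_product}: every admissible pair $(v,w)$ decomposes $\Xvw{v^{-1}}{w^{-1}}$ as a product $\prod_{i=1}^r \Xvw{v_i^{-1}}{w_i^{-1}}$ of Catalan-type factors whose dimensions sum to $m$. For the reverse direction, given any multiset $\{A_1,\dots,A_r\}$ of Catalan-type toric Richardson varieties with $\sum_i \dim A_i=m$, I would pick representatives $A_i\cong \Xvw{\uh{u_i}^{-1}}{(\uh{u_i}s(1,n_i))^{-1}}$ using Theorem~\ref{theo:normal_fan_s(1,n)}, then use $n\gg m$ to select pairwise disjoint and non-adjacent intervals $[\bar p_i,\bar q_i]\subset[1,n]$ with $\bar q_i-\bar p_i+1=n_i$, and assemble a global pair $(v,w)\in\Sn\times\Sn$ so that $\pi_{[\bar p_i,\bar q_i]}(v)=\uh{u_i}$ and $\pi_{[\bar p_i,\bar q_i]}(w)=\uh{u_i}s(1,n_i)$. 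This yields a proper minimal expression for $v^{-1}w$ whose associated variety realizes $\prod_i A_i$.

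For the injective direction, I would invoke the classification of Fano Bott manifolds by unordered signed rooted forests mentioned in Remark~\ref{rmk_Bott_manifolds}: the Catalan-type variety $X(\Sigma_T)$ corresponds to the signed rooted tree $\B_{(\varphi,\sigma)}$, so a product of Catalan-type varieties corresponds to the disjoint union of these trees, viewed as an unordered signed rooted forest. Two Fano Bott manifolds are isomorphic iff their signed rooted forests agree, and since a forest decomposes uniquely into connected components, the multiset of Catalan-type factors is recovered from the isomorphism class of the product. The main obstacle is precisely this injectivity step: carefully verifying that the signed forest of a product is the disjoint union of the signed trees of the factors, and appealing to the signed-forest classification of Fano Bott manifolds. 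An alternative route avoiding Remark~\ref{rmk_Bott_manifolds} would be to establish the indecomposability of each $X(\Sigma_T)$ as a smooth projective toric variety directly, by showing that any lattice splitting $N=N_1\oplus N_2$ separating the rays $\mathbf v_k,\mathbf w_k$ would, through the primitive relations $\mathbf v_k+\mathbf w_k\in\{\pm\mathbf v_{\varphi(k)},\pm\mathbf w_{\varphi(k)}\}$ and the connectedness of $\B_T$, force one summand to be trivial, and then to combine this with the uniqueness of fan-product decomposition for smooth projective toric varieties.

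With $f_m$ equal to the number of multisets of Catalan-type toric Richardson varieties of total dimension $m$, and with $b_{k+1}$ isomorphism classes in each dimension $k\geq 1$ by Corollary~\ref{cor_enumeration_Xvw}, the classical multiset generating-function identity yields
$$
\sum_{m\geq 0} f_m x^m = \prod_{k\geq 1}\frac{1}{(1-x^k)^{b_{k+1}}},
$$
which is~\eqref{eq_fm}.
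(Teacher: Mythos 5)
Your argument follows essentially the same outline as the paper's proof: interpret the right-hand side as the generating function for unordered binary forests (equivalently, multisets of Catalan-type toric Richardson varieties weighted by dimension), use Proposition~\ref{prop_Qvw_is_product} to decompose any admissible $\Xvw{v^{-1}}{w^{-1}}$ into Catalan-type factors, use Corollary~\ref{cor_enumeration_Xvw} (i.e.\ Theorem~\ref{theo:fan_and_toric_of_Catalan}) to pass between Catalan-type varieties and unordered binary trees, and use the hypothesis $n\gg m$ to realize any prescribed forest by assembling disjoint non-adjacent intervals. The one place you diverge from the paper is that you explicitly isolate and try to justify the unique-factorization step --- that isomorphic products must have the same multiset of Catalan-type factors --- whereas the paper dispatches this with the phrase ``$\bar{\Psi}$ is clearly injective.'' Your first route (appealing to the signed rooted forest classification of Fano Bott manifolds from Remark~\ref{rmk_Bott_manifolds}) is morally what the authors seem to have in mind, though note it leans on \cite{CLMP}, which is cited as ``in preparation,'' so it is not fully self-contained within this paper. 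Your alternative route, proving indecomposability of each $X(\Sigma_T)$ directly from the connectedness of $\B_T$ and the primitive relations, and then invoking uniqueness of product decompositions of smooth complete toric varieties, would be more self-contained and is a reasonable thing to pursue, but as written it is only a sketch and would need to be carried out; in particular the uniqueness of fan-product decompositions is itself a fact that requires a reference or proof. Overall: same approach as the paper, with worthwhile extra attention to a step the paper treats as obvious.
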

\begin{proof}
{Since} $b_{k+1}$ is {equal to} the number of unordered binary trees having $k$ vertices, the right-hand side of~\eqref{eq_fm} is the generating function of the number of unordered binary forests having $m$ vertices. So, it is enough to show that there is a bijective correspondence between the set of isomorphism classes of $m$-dimensional smooth toric Richardson varieties in the statement and the set of unordered binary forests having $m$ vertices. 
	
	The correspondence, denoted by $\Psi$, from the isomorphism classes of 
	$k$-dimensional smooth toric Richardson varieties of Catalan type to the set 
	of unordered binary tree having $k$ vertices is bijective by 
	Corollary~\ref{cor_enumeration_Xvw}. On the other hand, each smooth toric 
	Richardson variety in the statement is a product of toric Richardson varieties of 
	Catalan type by Proposition~\ref{prop_Qvw_is_product}. Therefore, the 
	bijective correspondence $\Psi$ induces a correspondence $\bar{\Psi}$ from 
	the set of isomorphism classes of $m$-dimensional Richardson varieties in the 
	statement to the set of unordered binary forests having $m$ vertices. The 
	correspondence $\bar{\Psi}$ is clearly injective. Using the surjectivity of $\Psi$ 
	and a natural embedding 
	\[
	\mathfrak{S}_{k_1}\times \cdots\times \mathfrak{S}_{k_r}\to \mathfrak{S}_{n+1},
	\]
	where $k_1+\dots+k_r=n+1$, one can see that $\bar{\Psi}$ is surjective if we pick a sufficiently large $n$. 
\end{proof}

The sequence $\{f_m\}$ is called \textit{Piet Hut's ``coat-hanger'' sequence}, which counts unlabeled rooted forests with $m$ edges such that each vertex has at most two children and the degree of each root is one
(see \href{http://oeis.org/A088325}{Sequence A088325} in OEIS \cite{OEIS}). We present $f_m$ for small values of $m$ in Table~\ref{table_fn}. Also, we draw all {unordered} binary forests having $m$ vertices for $m = 3,4$ in Figure~\ref{fig_unlabeled_forests_of_rooted_trees}.
\begin{table}[H]
	\begin{tabular}{c|rrrrrrrrrrrrrr}
		\toprule
		$m$ & $1$ & $2$ & $3$ & $4$ & $5$ & $6$ & $7$ & $8$ & $9$ & $10$ & $11$ & $12$ & $13$ & $14$ \\
		\midrule
		$f_m$ & $1$ & $2$ & $4$ & $8$ & $16$ & $34$ & $71$ & $153$ & $332$ & $730$ & $1617$ & $3620$ & $ 8148$ & $18473$ \\
		\bottomrule
	\end{tabular}
	\caption{Piet Hut's ``coat-hanger'' sequence $f_m$ for small values of $m$}\label{table_fn}
\end{table}

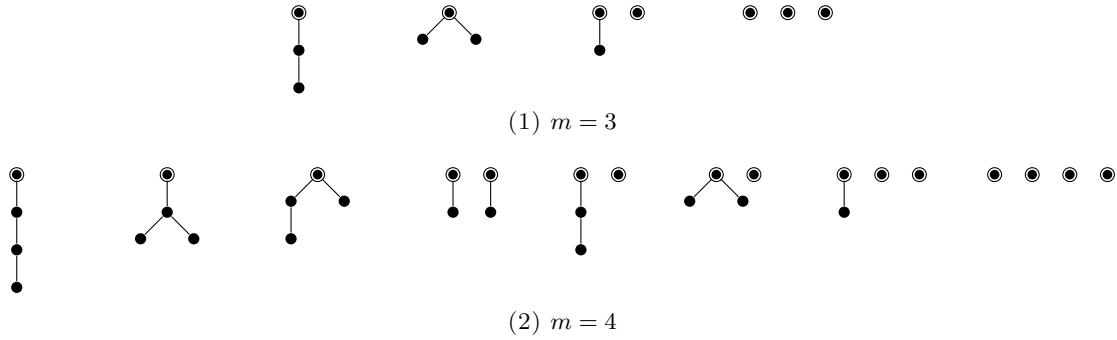
\begin{figure}[h]
 \begin{subfigure}{\textwidth}
 \centering
 \begin{tikzpicture}[ node distance = 0.5cm]
 \tikzstyle{vnode} = [circle, fill, inner sep = 1.5pt]
 \begin{scope}
 \node[black, draw, circle, inner sep = 1.5 pt, fill=black, double] (1) {};
 \node[circle, fill, inner sep = 1.5pt, below of =1] (2) {};
 \node[circle, fill, inner sep = 1.5pt, below of =2] (3) {};
 
 \draw (1)--(2); 
 \draw (2)--(3); 
 \end{scope}
 
 \begin{scope}[xshift=2cm]
 \node[black, draw, circle, inner sep = 1.5 pt, fill=black, double] (1) {};
 \node[circle, fill, inner sep = 1.5pt, below left of =1] (2) {};
 \node[circle, fill, inner sep = 1.5pt, below right of =1] (3) {};
 
 \draw (1)--(2) ;
 \draw (1)--(3) ;
 \end{scope}
 
 \begin{scope}[xshift=4cm]
 \node[black, draw, circle, inner sep = 1.5 pt, fill=black, double] (1) {};
 \node[circle, fill, inner sep = 1.5pt, below of =1] (2) {};
 \node[black, draw, circle, inner sep = 1.5 pt, fill=black, double, right of =1] (3) {};
 
 \draw (1)--(2);
 \end{scope}
 
 \begin{scope}[xshift=6cm]
 \node[black, draw, circle, inner sep = 1.5 pt, fill=black, double] (1) {};
 \node[black, draw, circle, inner sep = 1.5 pt, fill=black, double, right of =1] (2) {};
 \node[black, draw, circle, inner sep = 1.5 pt, fill=black, double, right of = 2] (3) {};
 \end{scope}
 \end{tikzpicture}
 \caption{$m=3$}
 \end{subfigure} 
 
 \vspace{1em}
 
 \begin{subfigure}{\textwidth}
 \centering
 \begin{tikzpicture}[ node distance = 0.5cm]
 \tikzstyle{vnode} = [circle, fill, inner sep = 1.5pt]
 \begin{scope}
 \node[black, draw, circle, inner sep = 1.5 pt, fill=black, double] (1) {};
 \node[vnode, below of= 1] (2) {};
 \node[vnode,below of= 2] (3) {};
 \node[vnode,below of=3] (4) {};
 
 \draw (1)--(2);
 \draw (2)--(3);
 \draw (3)--(4);
 \end{scope}
 
 \begin{scope}[xshift = 2cm]
 \node[black, draw, circle, inner sep = 1.5 pt, fill=black, double](1) {};
 \node[vnode, below of= 1] (2) {};
 \node[vnode, below left of =2] (3) {};
 \node[vnode, below right of = 2] (4) {};
 
 \draw (1)--(2) ;
 \draw (2)--(3);
 \draw (2)--(4) ;
 
 \end{scope}
 
 \begin{scope}[xshift=4cm]
 \node[black, draw, circle, inner sep = 1.5 pt, fill=black, double] (1) {};
 \node[vnode, below left of = 1] (2) {};
 \node[vnode, below right of = 1] (3) {};
 \node[vnode, below of= 2] (4) {}; 
 
 \draw (1)--(2) ;
 \draw (1)--(3) ;
 \draw (2)--(4) ;
 \end{scope}
 
 \begin{scope}[xshift=5.8cm]
 \node[black, draw, circle, inner sep = 1.5 pt, fill=black, double] (1) {};
 \node[vnode, below of=1] (2) {};
 \node[black, draw, circle, inner sep = 1.5 pt, fill=black, double, right of =1](3) {};
 \node[vnode, below of =3] (4) {};
 
 \draw (1)--(2) ;
 \draw (3)--(4) ;
 
 \end{scope}
 
 \begin{scope}[xshift=7.5cm]
 \node[black, draw, circle, inner sep = 1.5 pt, fill=black, double] (1) {};
 \node[vnode, below of= 1] (2) {};
 \node[vnode, below of= 2] (3) {};
 \node[black, draw, circle, inner sep = 1.5 pt, fill=black, double, right of= 1] (4) {};
 
 \draw (1)--(2) ;
 \draw (2)--(3) ;
 \end{scope}
 
 \begin{scope}[xshift = 9.3cm]
 \node[black, draw, circle, inner sep = 1.5 pt, fill=black, double] (1) {};
 \node[circle, fill, inner sep = 1.5pt, below left of =1] (2) {};
 \node[circle, fill, inner sep = 1.5pt, below right of =1] (3) {};
 \node[black, draw, circle, inner sep = 1.5 pt, fill=black, double, right of=1] (4) {};
 
 \draw (1)--(2) ;
 \draw (1)--(3) ;
 
 \end{scope}
 
 \begin{scope}[xshift=11cm]
 \node[black, draw, circle, inner sep = 1.5 pt, fill=black, double] (1) {};
 \node[circle, fill, inner sep = 1.5pt, below of =1] (2) {};
 \node[black, draw, circle, inner sep = 1.5 pt, fill=black, double, right of =1] (3) {};
 \node[black, draw, circle, inner sep = 1.5 pt, fill=black, double, right of =3] (4) {};
 \draw (1)--(2) ;
 \end{scope}
 
 \begin{scope}[xshift=13cm]
 \node[black, draw, circle, inner sep = 1.5 pt, fill=black, double] (1) {};
 \node[black, draw, circle, inner sep = 1.5 pt, fill=black, double, right of =1] (2) {};
 \node[black, draw, circle, inner sep = 1.5 pt, fill=black, double, right of =2] (3) {};
 \node[black, draw, circle, inner sep = 1.5 pt, fill=black, double, right of =3] (4) {};
 \end{scope}
 \end{tikzpicture}

 \caption{$m=4$}
 \end{subfigure}
 \caption{Unordered binary forests having $m$ vertices for $m = 3$ and $4$. We decorate the roots with double circles.}\label{fig_unlabeled_forests_of_rooted_trees}
\end{figure}

\begin{example}
	For each binary forest $\B = \B_1 \sqcup \cdots \sqcup \B_r$ having $r$ connected components in Figure~\ref{fig_unlabeled_forests_of_rooted_trees}, we provide a pair $(v,w)$ such that $\Xvw{v^{-1}}{w^{-1}} \cong {X(\Sigma_{\B_1}) \times \cdots \times X(\Sigma_{\B_r})}$.
	\begin{enumerate}
		\item For $m = 3$, from left to right, each pair $(v,w)$ is given by 
		\[
		(1234, 2341), \quad {(1324, 3241)}, \quad (12345, 23154), \quad (123456, 214365)
		\]
		\item For $m = 4$, from left to right, each pair $(v,w)$ is given by 
		\[
		\begin{split}
		&(12345, 23451), \quad (12435, 24351), \quad {(13524, 35241)}, 
		\quad (123456, 231564), \\
		&(123456, 234165), \quad {(132456,324165)}, 
		\quad (1234567, 2315476), \quad (12345678, 21436587).
		\end{split}
		\]
	\end{enumerate}
\end{example}

We close this section by addressing some questions. 
We proved in Lemma~\ref{lemm:Fano}  that every Richardson variety of Catalan type are smooth toric Fano varieties. 
Moreover, any smooth toric Fano Richardson variety $X^v_w$ for $v,w \in \mathfrak{S}_4$ is a product of toric varieties of Catalan type.\footnote{We have checked this using the computer program SageMath~\cite{sagemath}.} 
It is natural to ask the following. 
\begin{Question}\label{question}
Is any smooth toric Fano Richardson variety a product of toric varieties of Catalan type? 
\end{Question}

Flag varieties and Richardson varieties are defined for any Lie type so it would be interesting to extend the investigation of the connection between the combinatorics and toric Richardson varieties to other Lie types.
\begin{Question}\label{question_other_Lietype}
Does there exist a combinatorial object which describes and classifies toric Richardson varieties in other Lie types for the pairs $(v,w)$ in~\eqref{eq:simple_pair_vw}?
\end{Question}
\providecommand{\bysame}{\leavevmode\hbox to3em{\hrulefill}\thinspace}
\providecommand{\MR}{\relax\ifhmode\unskip\space\fi MR }
\providecommand{\MRhref}[2]{%
  \href{http://www.ams.org/mathscinet-getitem?mr=#1}{#2}
}
\providecommand{\href}[2]{#2}

\end{document}